\documentclass[10pt]{amsart}
\usepackage{tikz-cd}
\usetikzlibrary{arrows.meta}
\usepackage{amssymb}
\usepackage{mathrsfs}
\usepackage[shortlabels]{enumitem}
\usepackage[colorlinks,linkcolor=black,citecolor=black,urlcolor=black]{hyperref}
\usepackage[color = blue!20, bordercolor = black, textsize = tiny]{todonotes}
\usepackage[capitalise]{cleveref}
\usepackage{comment}

\numberwithin{equation}{section}
\newtheorem{thm}[subsection]{Theorem}
\newtheorem{cor}[subsection]{Corollary}
\newtheorem{lem}[subsection]{Lemma}
\newtheorem{prop}[subsection]{Proposition}

\theoremstyle{definition}

\newtheorem{rmk}[subsection]{Remark}

\newtheorem{const}[subsection]{Construction}

\newcommand{\bF}{\mathbf{F}}

\newcommand{\A}{\mathbb{A}}

\newcommand{\N}{\mathbb{N}}
\renewcommand{\P}{\mathbb{P}}
\newcommand{\Q}{\mathbb{Q}}
\newcommand{\R}{\mathbb{R}}

\newcommand{\T}{\mathbb{T}}

\newcommand{\Z}{\mathbb{Z}}

\newcommand{\cC}{\mathcal{C}}

\newcommand{\cF}{\mathcal{F}}
\newcommand{\cG}{\mathcal{G}}

\newcommand{\cM}{\mathcal{M}}

\newcommand{\rC}{\mathrm{C}}
\newcommand{\rD}{\mathrm{D}}

\newcommand{\rM}{\mathrm{M}}

\DeclareMathOperator{\Hom}{Hom}
\DeclareMathOperator{\Spec}{Spec}

\newcommand{\cok}{\mathrm{cok}}

\newcommand{\colim}{\mathop{\mathrm{colim}}}

\newcommand{\id}{\mathrm{id}}
\newcommand{\ul}{\underline}
\newcommand{\ol}{\overline}

\newcommand{\pt}{\mathrm{pt}}
\newcommand{\lSm}{\mathrm{lSm}}

\newcommand{\lSch}{\mathrm{lSch}}

\newcommand{\gp}{\mathrm{gp}}

\newcommand{\sing}{\mathrm{sing}}

\DeclareMathOperator{\fib}{fib}

\newcommand{\Sh}{\mathrm{Sh}}

\newcommand{\Cat}{\mathrm{Cat}}
\newcommand{\Mod}{\mathrm{Mod}}

\newcommand{\ad}{\mathrm{ad}}

\newcommand{\CAlg}{\mathrm{CAlg}}

\newcommand{\op}{\mathrm{op}}

\newcommand{\eff}{\mathrm{eff}}

\newcommand{\SH}{\mathrm{SH}}

\newcommand{\ex}{\mathrm{ex}}
\newcommand{\DA}{\mathrm{DA}}

\newcommand{\DM}{\mathrm{DM}}

\newcommand{\Zar}{\mathrm{Zar}}

\newcommand{\letale}{\mathrm{l\acute{e}t}}
\newcommand{\ket}{\mathrm{k\acute{e}t}}
\newcommand{\setale}{\mathrm{s\acute{e}t}}
\newcommand{\mot}{\mathrm{mot}}

\newcommand{\et}{\mathrm{\acute{e}t}}

\newcommand{\Th}{\mathrm{Th}}

\newcommand{\add}{\mathrm{add}}

\usepackage[bbgreekl]{mathbbol}
\usepackage{relsize}

\DeclareSymbolFontAlphabet{\mathbb}{AMSb} 
\DeclareSymbolFontAlphabet{\mathbbl}{bbold}

\begin{document}
\title[Six-functor formalism for Kummer \'etale cohomology]{Six-functor formalism for Kummer \'etale cohomology of log schemes}
\author{Doosung Park}
\address{Department of Mathematics and Informatics, University of Wuppertal, Germany}
\email{dpark@uni-wuppertal.de}
\subjclass[2020]{14A21, 14F42}
\keywords{log \'etale cohomology, Poincar\'e duality, log \'etale rigidity}
\date{\today}
\begin{abstract}
We establish a Grothendieck six-functor formalism for Kummer \'etale cohomology including Poincar\'e duality for every separated vertical exact log smooth morphism of noetherian fs log schemes $f\colon X\rightarrow S$ when the coefficient ring $\Lambda$ is killed by an integer invertible on $S$. This is done via log \'etale rigidity \[\mathrm{D}_{\mathrm{l\acute{e}t}}(S,\Lambda)\simeq \mathrm{DA}_{\mathrm{l\acute{e}t}}(S,\Lambda).\] To achieve this, we also prove that Kummer \'etale cohomology satisfies $\mathbb{A}^1$-invariance, invariance under virtual isomorphisms, log cdh-descent, and invariance under verticalization.
\end{abstract}
\maketitle

\section{Introduction}

Fujiwara, Kato, and Nakayama \cite{Nak1}, \cite{Ill}, \cite{Nak2} studied the Grothendieck six-functor formalism for Kummer \'etale and log \'etale derived categories.
These results include the proper base change theorem, smooth base change theorem, K\"unneth formula, invariance under log blow-ups, etc.
However,
Poincar\'e duality for Kummer \'etale cohomology has not been established in the following generality:
\[
f^!
\simeq
f^*(d)[2d]
\colon
\rD_\ket(S,\Lambda) \to \rD_\ket(X,\Lambda)
\]
for every separated vertical exact log smooth morphism $f\colon X\to S$ of finite dimensional noetherian fs log schemes with pure relative dimension $d$,
where $\Lambda$ is a torsion commutative ring invertible in $S$.
Here, $\rD_\ket(S,\Lambda)$ denotes the derived $\infty$-category of Kummer \'etale sheaves of $\Lambda$-modules on the small Kummer \'etale site $S_\ket$.
See \cite[Theorem 7.5]{Nak1} and \cite[Proposition 4.4]{MR1622751} for known cases,
and see also \cite[Remark 14.2.1]{Nak2}.
Nakayama \cite[Theorem 14.2(2), Remark 14.2.2]{Nak2} also proved a weaker version
\[
f_* \ul{\Hom}(\cF,\Lambda)(d)[2d]
\simeq
\ul{\Hom}(f_*\cF,\Lambda)
\]
for every locally constant and constructible sheaf $\cF$ of $\Lambda$-modules on $X_\ket$ assuming that $f$ is also proper,
where $\ul{\Hom}$ is internal Hom.

In this paper,
we establish the following version of the Grothendieck six-functor formalism including Poincar\'e duality for $\rD_\ket(-,\Lambda)$:

\begin{thm}
\label{intro.1}
Let $B$ be a finite dimensional noetherian base scheme,
and let $\Lambda$ be a torsion commutative ring killed by an integer invertible on $B$.
Then in the category $\lSch_B$ of fs log schemes of finite type over $B$,
the following properties hold for $\rD_\ket(-,\Lambda)$:
\begin{enumerate}
\item[\textup{(1)}]
For every morphism $f\colon X\to S$ in $\lSch_B$,
the functor
\[
f^*\colon \rD_\ket(S,\Lambda)\to \rD_\ket(X,\Lambda)
\]
admits a right adjoint $f_*$.
If $f$ is exact log smooth,
then $f^*$ admits a left adjoint $f_\sharp$.
\item[\textup{(2)}] \emph{$\A^1$-invariance.}
Consider the projection $p\colon X\times \A^1\to X$ with $X\in \lSch_B$.
Then $p^*$ is fully faithful.
\item[\textup{(3)}]
\emph{Invariance under verticalization}.
Let $f\colon X\to S$ be an exact log smooth morphism in $\lSch_B$,
and let $j\colon X-\partial_S X\to X$ be the obvious open immersion,
where $X-\partial_S X$ is the largest open subscheme of $X$ that is vertical over $S$.
Then the natural transformation
\[
f^*
\to
j_*j^* f^*
\]
is an isomorphism.
\item[\textup{(4)}]
\emph{Invariance under virtual isomorphisms.} Let $f\colon X \to S$ be a virtual isomorphism in $\lSch_B$
in the sense that  $\ul{f}$ is an isomorphism and $\ol{\cM}_{S}^\gp \simeq \ol{\cM}_{X}^\gp$.
Then $f^*$ is an equivalence of $\infty$-categories.
\item[\textup{(5)}]
\emph{Localization property.}
Let $i\colon Z\to S$ be a strict closed immersion in $\lSch_B$ with the open complement $j$.
Then the pair of functors $(i^*,j^*)$ is jointly conservative,
and $i_*$ is fully faithful.
\item[\textup{(6)}]
Let $(\lSch_B)_\mathrm{sep}$ be the subcategory of $\lSch_B$ spanned by separated morphisms.
Then there exists a functor
\[
(-)_!\colon (\lSch_B)_\mathrm{sep}\to
\Cat_\infty
\]
such that $f_!\simeq f_*$ if a morphism $f$ in $\lSch_B$ is proper and $f_!\simeq f_\sharp$ if $f$ is an open immersion.
Furthermore,
for every separated morphism $f$ in $\lSch_B$,
$f_!$ admits a right adjoint $f^!$.
\item[\textup{(7)}] \emph{Projection formula}. Let $f\colon X\to S$ be a separated morphism in $\lSch_B$.
Then there exists a natural isomorphism
\[
f_!\cF\otimes \cG
\simeq
f_!(\cF\otimes f^*\cG)
\]
for $\cF\in \rD_\ket(X)$ and $\cG\in \rD_\ket(S)$.
\item[\textup{(8)}] \emph{Kato--Nakayama's base change property \cite[Theorem 5.1]{Nak1}, \cite[Theorem 12.7]{Nak2}}. 
Let
\[
\begin{tikzcd}
X'\ar[d,"f'"']\ar[r,"g'"]&
X\ar[d,"f"]
\\
S'\ar[r,"g"]&
S
\end{tikzcd}
\]
be a cartesian square in $\lSch_B$ such that either $f$ or $g$ is exact.
Assume one of the following conditions:
\begin{enumerate}
\item[\textup{(i)}] $f$ is proper.
\item[\textup{(ii)}] $g$ is exact log smooth.
\end{enumerate}
Then
\[
g^*f_*
\simeq
f_*'g'^*.
\]
\item[\textup{(9)}] \emph{Poincar\'e duality}. Let $f\colon X\to S$ be a separated vertical exact log smooth morphism in $\lSch_B$ with pure relative dimension $d$.
Then
\[
f^! 
\simeq
f^*(d)[2d].
\]
\item[\textup{(10)}] \emph{A special case of Lefschetz duality}.
Let $f\colon X\to S$ be a separated log smooth morphism in $\lSch_B$ with pure relative dimension $d$,
and let $j\colon X-\partial_S X\to X$ be the obvious open immersion.
Assume that $S$ has the trivial log structure.
Then
\[
f^!\simeq j_\sharp j^*f^*(d)[2d].
\]
\end{enumerate}
\end{thm}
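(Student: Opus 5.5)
The plan is to deduce the whole theorem from the log étale rigidity equivalence
\[
\rD_\letale(S,\Lambda)\simeq \DA_\letale(S,\Lambda)
\]
announced in the abstract. Once this holds, the known six-functor formalism for log motives $\DA_\letale(-,\Lambda)$ over $\lSch_B$ can be transported to $\rD_\letale$, and then compared with $\rD_\ket$. The dictionary runs in two steps.

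\emph{Comparison step.} First, $\rD_\ket(X,\Lambda)\simeq \rD_\letale(X,\Lambda)$ for torsion $\Lambda$ with invertible order. This follows from Nakayama's comparison, \cite{Nak2}, between Kummer étale and log étale cohomology: the log étale topology is generated by Kummer étale covers together with log blow-ups, and log blow-ups do not change Kummer étale cohomology.

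\emph{Invariance properties.} Next I prove the properties that make the rigidity map well defined and fully faithful, in this order:
\begin{itemize}
\item[(a)] $\A^1$-invariance (2). Reduce strict-étale locally to $S$ with a chart; the case of trivial log structure is classical. In general, use the Kummer étale local structure to reduce to $S\times\A_P$-type situations, and use the proper/smooth base change of \cite{Nak1}.
\item[(b)] Invariance under virtual isomorphisms (4). Compare stalks at log geometric points: the Kummer étale fundamental group of a log point depends only on $\ol{\cM}^\gp$ away from the characteristic.
\item[(c)] Invariance under verticalization (3). Reduce to $X=S\times\A_{\N}$-type charts and compute $j_*$ by a Kummer local calculation.
\item[(d)] Log cdh-descent. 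This follows from proper base change (8)(i), which is \cite{Nak1}, together with (4).
\end{itemize}
These are exactly the axioms (log $\A^1$-/$\square$-invariance, $\ol{\square}$-invariance, dividing invariance, Nisnevich/étale descent) characterizing $\DA_\letale$. So the functor $\rD_\letale\to\DA_\letale$ obtained by the universal property is defined, and it has a right adjoint.

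\emph{Rigidity.} To show the equivalence, check that both sides are generated by the same objects. For full faithfulness, show that the unit is an isomorphism on constant sheaves via (a)–(d). For essential surjectivity, use the standard Suslin–Voevodsky/Ayoub rigidity argument: Tate twist $\Lambda(1)$ is identified with $\mu_n$, so motives are generated by the small site. This identification uses the log Kummer sequence, which makes $\mathbb{G}_m^{\log}$ behave as the Tate object.

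\emph{Six functors.} With rigidity in hand, items (1), (5), (6), (7) and (8)(ii) are imported from the formalism for $\DA_\letale$, together with the Nagata compactification for separated morphisms in $\lSch_B$. Item (8)(i) is already Nakayama's theorem. Poincaré duality (9) follows from the purity $f^!\simeq \Th(T_f)\otimes f^*$ in $\DA$, with the Thom object trivialized to $(d)[2d]$ via the identification $\Lambda(1)\simeq\mu_n$. For (10), factor through $j$: vertical exactness holds on $X-\partial_S X$, and invariance under verticalization (3) identifies $f^!$ with $j_\sharp j^*f^*(d)[2d]$ when $S$ has trivial log structure.

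\emph{Main obstacle.} I expect the main obstacle to be the verticalization invariance (c) and the rigidity step. Log smooth maps that are not vertical are precisely where the Kummer étale topology fails to see the boundary. My first attempt would be a direct stalk computation on $\A_{\N}$ with nontrivial log structure on the base, reducing via virtual isomorphisms (4) to rank one monoids.
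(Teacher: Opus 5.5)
Your overall order (prove the invariance properties, then a rigidity theorem, then import the motivic six functors) matches the paper. The gap is in the comparison step, through which everything is transported: $\rD_\ket(X,\Lambda)\simeq \rD_\letale(X,\Lambda)$ is false. Log blow-up invariance of Kummer \'etale cohomology only shows that $\kappa^*\colon \rD_\ket(S,\Lambda)\to\rD_\letale(S,\Lambda)$ is fully faithful; the paper needs noetherianity even to get this for unbounded complexes (Proposition \ref{dim.5}). It is not essentially surjective. Take $S=\A^2_k$, $k$ separably closed, with the log structure of the axes, and let $g\colon S'\to S$ be the log blow-up at the origin, with exceptional divisor $E$.
\begin{enumerate}
\item Since $g$ is a monomorphism and a log \'etale cover, $g^*$ is an equivalence on $\rD_\letale$. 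So if $\kappa^*_S$ were an equivalence, $g^*$ would be an equivalence on $\rD_\ket$ too.
\item The skyscraper $\Lambda$ at a closed point of $E$ is not of the form $g^*\cF$. Such an $\cF$ would vanish off the origin, so $\cF\simeq i_{0*}\cG$ with $\cG\in\rD_\ket(\pt_{\N^2},\Lambda)$.
\item By Kummer proper base change along the strict map $i_0$, $g^*\cF\simeq i_{E*}g_E^*\cG$. All stalks of this on $E$ are isomorphic to the fibre of $\cG$, so it cannot be a skyscraper.
\end{enumerate}
This is also why (8) needs an exactness hypothesis for $\rD_\ket$ but not for $\rD_\letale$ (Theorem \ref{dim.4}).

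Consequently your route yields statements about $\rD_\letale$, not $\rD_\ket$. What is needed is $\rD_\ket(S,\Lambda)\simeq\DA^\ex_\letale(S,\Lambda)$, the subcategory generated by exact log smooth motives (Theorem \ref{rig.4}). That is where the formalism of \cite{logsix} lives, including purity for vertical exact log smooth maps, and only on log schemes with Zariski-local charts; the paper removes that restriction by strict \'etale descent. Essential surjectivity onto $\DA^\ex_\letale$ is then a genuine step that your remark ``$\Lambda(1)\simeq\mu_n$'' does not supply. Twists are handled by Proposition \ref{rig.8}, but you still have to show that $\Lambda_X$ lies in the image for vertical exact log smooth $X$. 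The paper does this using:
\begin{itemize}
\item $f_\sharp\simeq f_!(d)[2d]$ in $\DA^\ex_\letale$;
\item the factorization $X\to\ul{X}\times_{\ul{S}}S\to Y\to S$;
\item $\rho_!f_*\simeq f_*\rho_!$ for proper $f$ (Proposition \ref{rig.3}).
\end{itemize}

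Second, your sketch does not reach verticalization invariance, which you rightly flag as the crux. Virtual isomorphisms preserve $\ol{\cM}^\gp$, so they cannot reduce to rank one, and a stalk computation on $\A_\N$ does not handle a general chart $\theta\colon P\to Q$. The paper proceeds as follows:
\begin{enumerate}
\item Using the generators $v_*\Lambda_V$ with $v$ proper (Proposition \ref{logv.1}, which rests on Poincar\'e duality for log \'etale maps), reduce to $R\Gamma(X,\Lambda)\simeq R\Gamma(X-\partial_S X,\Lambda)$.
\item Work log v-locally so that $\theta$ is saturated.
\item Stratify to reduce to $S\simeq\ul{S}\times\pt_P$.
\item Cover by the strict closed pieces $\A_{Q,Q-G_i}$ attached to the maximal $\theta$-critical faces. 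This uses log cdh-descent (Theorem \ref{logv.7}) and the fact that $0$ lies on the boundary of $K\otimes\R$.
\item Use a virtual isomorphism to replace each piece by $\ul{S}\times\pt_{P'}\times\A_{G_I}$.
\item Over a base with trivial log structure, deduce verticalization invariance from $\A^1$-invariance \emph{and $\square$-invariance} (Theorem \ref{A1.2}) via \cite[Proposition 2.24]{logA1}.
\end{enumerate}
That last input is absent from your list (a)--(d).

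Relatedly, the comparison functor does not come from a universal property. It is $\Sigma^\infty L_\mot\rho_{S\sharp}$, and the invariances are used to show that $L_\mot$ does nothing on $\rho_{S\sharp}\cF$ (Lemma \ref{rig.7}). Full faithfulness after $\P^1$-stabilization then uses Ayoub's maps $T^{\otimes n}\to\rho_\sharp\Lambda(n)[2n]$, together with the fact that $\ul{\Hom}(T,-)$ preserves colimits (Proposition \ref{dim.10}).
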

See \S \ref{proof} for the proof.
Here is a summary of the comparison of this theorem with the literature:
\begin{itemize}
\item In (1), the existence of $f_*$ follows formally from known results.
The existence of $f_\sharp$ is new.
\item (2) is new.
\item (3) is new. In the absolute case, this is due to Fujiwara--Kato  \cite[Theorem 7.4]{Ill}.
\item (4) is new.
\item (5) is a direct consequence of \cite[2.8.3]{Nak1}.
\item (6) is an $\infty$-categorical enrichment of \cite[5.4]{Nak1}.
\item (7) is new.
\item (8) is an immediate generalization of \cite[Theorem 5.1]{Nak1} and \cite[Theorem 12.7]{Nak2} (with the simplified condition in \cite[Remark 12.7.1]{Nak2}) to the unbounded case.
\item (9) is new and improves \cite[Theorem 7.5]{Nak1}, \cite[Proposition 4.4]{MR1622751}, and \cite[Theorem 14.2(2)]{Nak2} for the vertical noetherian case. 
\item (10) is new and improves a special case of \cite[Theorem 7.5]{Nak1}.
\end{itemize}

For the case of \'etale cohomology of schemes,
there are two different proofs of Poincar\'e duality.
The first proof is of course the original one in SGA4 \cite[Th\'eor\`eme 3.2.5 in Expos\'e XVIII]{SGA43}.
The second proof proceeds via Ayoub's relative \'etale rigidity \cite[Th\'eor\`eme 4.1]{Ayo14},
which asserts
\[
\rD_\et(S,\Lambda)
\simeq
\DA_\et(S,\Lambda)
\]
for every noetherian scheme $S$ and a commutative ring $\Lambda$ killed by an integer invertible on $S$.
The notable differences between Ayoub’s theorem and Suslin–-Voevodsky’s étale rigidity \cite[Theorems 7.20, 9.35]{MVW} are that Ayoub’s theorem applies over a general base $S$, is formulated in a $\P^1$-stable setting, and does not use transfers.
Combining this with Ayoub's motivic six-functor formalism for $\DA_\et(S,\Lambda)$ \cite[\S 4.5]{Ayo072} (also \cite[Theorem 2.4.50]{CD19} to avoid the quasi-projectivity assumption),
we see that the same six-functor formalism holds for $\rD_\et(S,\Lambda)$.
As explained in \cite[Remarque 4.16]{Ayo14},
this reproves Poincar\'e duality for \'etale cohomology.

We prove Theorem \ref{intro.1} using relative log \'etale rigidity.
We have a motivic homotopy category $\SH(S)$ in \cite[Definition 2.29]{logA1} extending the original $\SH$ of Morel and Voevodsky \cite{MV} from schemes to fs log schemes $S$.
We also have the $\infty$-category $\DA(S,\Lambda)$ of $\Lambda$-modules in $\SH(S)$ and its log \'etale hyperlocalization $\DA_\letale(S,\Lambda)$.
Let $\DA_\letale^\ex(S,\Lambda)$ be the full subcategory generated under colimits, shifts, and Tate twists by exact log smooth motives.
For the underlying scheme $\ul{S}$, \cite[Proposition 2.31]{logA1} implies
\[
\DA_\et(\ul{S},\Lambda)
\simeq
\DA_\letale^\ex(\ul{S},\Lambda)
\simeq
\DA_\letale(\ul{S},\Lambda).
\]
However, if $S$ has a nontrivial log structure, then $\DA_\et(\ul{S},\Lambda)$ is not equivalent to $\DA_\letale^\ex(S,\Lambda)$ and $\DA_\letale(S,\Lambda)$.
Our main result on the relative log \'etale rigidity is as follows:

\begin{thm}[Theorems \ref{rig.4} and \ref{rig.6}]
\label{intro.2}
Let $S$ be a finite dimensional noetherian fs log scheme,
and let $\Lambda$ be a torsion commutative ring killed by an integer invertible on $S$.
Then there exist equivalences of symmetric monoidal $\infty$-categories
\[
\rD_\ket(S,\Lambda)\simeq
\DA_\letale^\ex(S,\Lambda),
\;
\rD_\letale(S,\Lambda)\simeq
\DA_\letale(S,\Lambda).
\]
\end{thm}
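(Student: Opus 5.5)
The plan follows Ayoub's strategy for relative étale rigidity, adapted to log schemes. The first step is to build comparison functors. Sending a Kummer étale (resp. log étale) sheaf on the small site $S_\ket$ (resp. $S_\letale$) to the associated presheaf on exact log smooth (resp. all log smooth) fs log schemes over $S$ gives a symmetric monoidal left adjoint
\[
\rho^*\colon \rD_\ket(S,\Lambda)\to \DA^\ex_\letale(S,\Lambda),
\]
and similarly for $\rD_\letale$. Concretely, $\rho^*$ is restriction of sheaves to the big site, followed by $\A^1$-localization and $\P^1$-stabilization. Its right adjoint $\rho_*$ evaluates a motive on the small site.

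The second step is the key input: that $\rD_\ket(-,\Lambda)$ itself already behaves like a motivic theory. We would show:
\begin{itemize}
\item[(a)] For exact log smooth $f$, the functor $f^*$ has a left adjoint $f_\sharp$, and smooth base change and the projection formula hold. This follows from Kato--Nakayama base change \cite[Theorem 5.1]{Nak1}.
\item[(b)] $\A^1$-invariance holds. To prove it, reduce via the localization property (5) and the stalk description of Kummer étale sheaves (log geometric points) to the case of a strict henselian base. There, use the structure of the log stalks to reduce to classical $\A^1$-invariance for $\ell$-torsion étale cohomology.
\item[(c)] The Tate twist $\Lambda(1)$ is invertible. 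Indeed, $\P^1$-stability follows from the projective bundle formula for Kummer étale cohomology, which in turn comes from the Kummer sequence, valid since $n$ is invertible.
\item[(d)] For the log étale version, invariance under log blow-ups and verticalization, needed to identify $\DA_\letale$ (which inverts log modifications) with $\rD_\letale$.
\end{itemize}

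With (a)--(c) in hand, the big-site presheaf $X\mapsto \rD_\ket(X)$ is $\A^1$-invariant, $\P^1$-stable and satisfies Kummer étale descent. Hence $\rho^*$ factors as claimed and $\rho_*\rho^*\simeq\id$: the unit is computed by global sections on exact log smooth $X$ and is invariant under $\A^1$-localization and stabilization. So $\rho^*$ is fully faithful.

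For essential surjectivity, it suffices to show that $\DA^\ex_\letale(S,\Lambda)$ is generated by objects in the image. That is, for exact log smooth $f\colon X\to S$, the motive $f_\sharp\Lambda$ must lie in the image of $\rho^*$. This is where I expect the main obstacle.

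Following Ayoub, I would first prove that $\rho^*$ commutes with $f_*$ for proper $f$ and with $f_\sharp$ for exact log smooth $f$. I would then reduce, via Zariski/Kummer étale locality and the log-smooth chart structure, to the following cases:
\begin{itemize}
\item strict smooth morphisms, handled by Ayoub's theorem \cite[Théorème 4.1]{Ayo14} on the underlying schemes;
\item the basic exact log smooth morphisms $S\times_{\A_P}\A_Q\to S$, coming from injective maps $P\to Q$ of fs monoids with $Q^\gp/P^\gp$ torsion-free up to invertible torsion.
\end{itemize}
For the latter, decompose into a Kummer part (finite Kummer étale, hence in the small site) and a toric part $\A_{\N}$-type, i.e.\ $\A^1$ with log structure. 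The toric part is treated by a Gysin/localization argument relating it to $\A^1$ and $\mathbb{G}_m$, whose motives are generated by $\Lambda$ and $\Lambda(1)[1]$.

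Throughout, conservativity is checked on log geometric points using the continuity of both sides in $S$, together with $\rD_\ket$ of a log strictly henselian point being modules over $\Lambda$ with a $\widehat{\Z}'(1)^r$-action. Finally, the equivalence is symmetric monoidal because $\rho^*$ is.
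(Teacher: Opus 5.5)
\textbf{Gap 1: verticalization is missing from full faithfulness.} In this paper $\DA_\letale(S,\Lambda)$ is the $\P^1$-stabilization of log \'etale hypersheaves on all of $\lSm_S$ that are $\A^1$-invariant \emph{and} invariant under verticalization, $\cF(X)\simeq\cF(X-\partial_S X)$. $\DA_\letale^\ex(S,\Lambda)$ is a full subcategory of it, not an $\A^1$-localization of presheaves on exact log smooth schemes. So your claim that the unit is unchanged by the localization needs, besides (a)--(c),
\[
R\Gamma_\ket(X,f^*\cF)\simeq R\Gamma_\ket(X-\partial_S X,f^*\cF)
\]
for every log smooth $f\colon X\to S$; this is Lemma \ref{rig.7}. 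Already for $S$ with trivial log structure and $X=S\times\A_\N$, this identifies the cohomology of $S\times\A_\N$ with that of $S\times\mathbb{G}_m$. That is a logarithmic purity statement which $\A^1$-invariance, $\P^1$-stability and descent do not supply. Even in that case the paper also uses $\square$-invariance (Theorem \ref{A1.2}) and \cite[Proposition 2.24]{logA1}; see Lemma \ref{ver.2}.

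You mention verticalization only under (d), only for the log \'etale version, and with no strategy. But it is Theorem \ref{ver.1}, the main technical result of the paper, and it is needed for the Kummer statement too. Its proof proceeds as follows:
\begin{itemize}
\item reduce to constant coefficients via generation by $v_*\Lambda_V$ with $v$ proper;
\item pass log v-locally to a saturated $f$ with a good chart;
\item stratify $S$ down to $\ul S\times\pt_P$;
\item cover $X$ by the strict closed pieces $X\times_{\A_Q}\A_{Q,Q-G_I}$ attached to the maximal $\theta$-critical faces, using log cdh-descent and a contractibility argument for $K\otimes\R$;
\item change log structures along virtual isomorphisms (Theorem \ref{virtual.1}) to reach the trivial-base case.
\end{itemize}
Also, the target uses the log \'etale topology on all of $\lSm_S$. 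So the Kummer case needs the unbounded full faithfulness of $\kappa^*\colon\rD_\ket(S,\Lambda)\to\rD_\letale(S,\Lambda)$ (Proposition \ref{dim.5}), not just Kummer \'etale descent.

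\textbf{Gap 2: the essential surjectivity plan is not viable as written.} Your $\rho^*$ is the paper's $\rho_!$.
\begin{itemize}
\item The existence of $f_\sharp$ on $\rD_\ket$ for exact log smooth $f$, and the projection formula, do not follow from Kato--Nakayama base change. In the paper they are outputs of rigidity, not inputs.
\item The assertion that $\rho^*$ commutes with $f_\sharp$ for exact log smooth $f$ already contains the essential surjectivity you want, since the motive of $X$ is $f_\sharp\rho^*\Lambda_X$. So it is the crux, not a preliminary.
\item Your chart reduction would need the essential image to be stable under $g_\sharp$, for $g\colon S\times_{\A_P}\A_Q\to S$, applied to the non-constant coefficients coming from the strict \'etale map $X\to S\times_{\A_P}\A_Q$. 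A Gysin computation of the toric motive with constant coefficients does not give this.
\end{itemize}

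The paper instead argues as Cisinski--D\'eglise. Working strict \'etale locally and using verticalization invariance in $\DA$, it suffices to treat $\Lambda_X$ for $f\colon X\to S$ vertical, exact log smooth, separated of pure dimension $d$, with trivial $T_f$. Three ingredients then put $f_\sharp\Lambda_X$ in the image:
\begin{itemize}
\item motivic Poincar\'e duality $f_\sharp\simeq f_!(d)[2d]$ in $\DA_\letale^\ex$ \cite[Theorem 1.3.1(2)]{logsix};
\item the factorization $f_!\simeq p_*j_\sharp g_*$ through $X\to\ul X\times_{\ul S}S\to Y\to S$, with $g,p$ proper and $j$ an open immersion;
\item the facts that $\rho_!$ commutes with proper pushforward (Proposition \ref{rig.3}) and with Tate twists (Proposition \ref{rig.8}).
\end{itemize}
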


To prove this,
we need to show that log \'etale cohomology is $\A^1$-invariant in \S \ref{5} and invariant under verticalization in \S \ref{8},
which are not covered in \cite{Nak1}, \cite{Ill}, and \cite{Nak2}.
The invariance under verticalization is the main technical part of this paper.

We know that $\DA_\letale^\ex(S,\Lambda)$ enjoys the six-functor formalism in \cite[Theorems 1.2.1, 1.3.1]{logsix} for finite dimensional noetherian fs log schemes admitting charts Zariski locally.
In \S \ref{proof},
we explain how to remove the assumption of admitting charts Zariski locally.

\subsection*{Organization of the paper}
In \S \ref{unbounded}, we explain how to promote results for bounded complexes to unbounded complexes under the noetherian assumption on fs log schemes.
In \S \ref{motive}, we recall the definitions of $\DA_\letale(S,\Lambda)$ and $\DA_\letale^\ex(S,\Lambda)$,
and we show that the pushforward functor for any finite type morphism preserves colimits.
In \S \ref{4}, we prove Poincar\'e duality for the log \'etale case.
In \S \ref{5}, we prove that log \'etale cohomology is $\A^1$-invariant and $\square$-invariant,
where $\square:=(\P^1,\infty)$.
In \S \ref{6}, we prove that log \'etale cohomology is invariant under virtual isomorphisms.
In \S \ref{7}, we discuss log v-descent and log cdh-descent for log \'etale cohomology.
In \S \ref{8}, we prove that log \'etale cohomology is invariant under verticalization.
In \S \ref{rig}, combining the results from earlier sections and imitating the proofs of Ayoub's relative \'etale rigidity \cite[Corollaire 4.11]{Ayo14} for the fully faithful part and Cisinski--D\'eglise's proof of relative \'etale rigidity \cite[Theorem 4.5.2]{CDetale} for theh essentially surjective part, we establish relative log \'etale rigidity.
In \S \ref{proof}, we remove the assumption of admitting charts Zariski locally used in \cite{logsix} for $\DA_\letale^\ex(-,\Lambda)$.

\subsection*{Convention} Our standard reference for the notation and
terminology in log geometry is Ogus’s
book \cite{Ogu}.
An fs log scheme $S$ is called quasi-compact, quasi-separated, separated, noetherian, or finite dimensional if the same holds for the underlying scheme $\ul{S}$.
A morphism of fs log schemes $f$ is called affine, separated, proper, finite type, or finite if the same holds for the underlying morphism of schemes $\ul{f}$.
For a stable $\infty$-category $\cC$ with objects $X$ and $Y$,
let $\hom_\cC(X,Y)$ denote the hom spectrum.

\subsection*{Acknowledgements}

This research was conducted in the framework of the DFG-funded research training group GRK 2240: \emph{Algebro-Geometric Methods in Algebra, Arithmetic and Topology}.

\subsection*{AI declaration}

The author used GPT-5.6 during the preparation of this manuscript to check grammar and spelling and to assist in reviewing the mathematical proofs.

\section{From bounded complexes to unbounded complexes}
\label{unbounded}

For a qcqs fs log scheme $S$,
let $S_\ket$ (resp.\ $S_\letale$) denote the small Kummer (resp.\ log) \'etale site of $S$, see \cite[2.1, 2.5]{Ill} (resp.\ \cite[3.3, 3.4, Proposition 3.5]{Nak2}).
The underlying category is the category of qcqs fs log schemes Kummer (resp.\ log) \'etale over $S$,
Every morphism in $S_\ket$ (resp.\ $S_\letale$) is automatically Kummer (resp.\ log) \'etale.
Moreover, $S_\ket$ (resp.\ $S_\letale$) has finite colimits.

A family of morphisms $\{U_i\to X\}_{i\in I}$ of fs log schemes is called a \emph{Kummer} (resp.\ \emph{log}) \emph{\'etale covering} if each $U_i\to X$ is Kummer (resp.\ log) \'etale and $\amalg_{i\in I}U_i\to X$ is surjective (resp.\ universally surjective).

For a qcqs fs log scheme $S$ and $X\in S_\ket$ (resp.\ $X\in S_\letale$), let $\Lambda_X\in \rD_\ket(S,\Lambda)$ (resp.\ $\rD_\ket(S,\Lambda)$) be the object represented by $X$.

We refer to \cite[D\'efinition 2.3 in Expos\'e VI]{SGA42} for the notion of coherent topos.

\begin{prop}
\label{dim.11}
Let $S$ be a qcqs fs log scheme.
Then $\Sh(S_\ket)$ and $\Sh(S_\letale)$ are coherent topoi.
\end{prop}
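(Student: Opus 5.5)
We will verify the criterion of SGA4, Exposé VI, Proposition 2.4.5 (and Définition 2.3): a topos is coherent as soon as it is equivalent to the topos of sheaves on a site $C$ such that (a) $C$ has finite projective limits, or more generally fiber products and a final object, and (b) every object of $C$ is quasi-compact for the topology, meaning every covering family admits a finite subcovering family. We apply this to $C=S_\ket$ and $C=S_\letale$ with their Kummer (resp.\ log) étale topologies as defined above.

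For (a), the final object is $S$ itself, which lies in both sites since $S$ is qcqs. For fiber products, let $U\to X\leftarrow V$ be morphisms in $S_\ket$. All three maps are Kummer étale, so the fs fiber product $U\times_X V$ is Kummer étale over $V$, hence over $S$. Since fs fiber products of qcqs objects are qcqs (the underlying scheme is finite over the scheme-theoretic fiber product, in particular quasi-compact and quasi-separated), $U\times_X V$ lies in $S_\ket$. The same argument works for $S_\letale$, using stability of log étale morphisms under fs base change and composition. This is also consistent with the remark above that these sites have finite colimits, and it shows that they have finite limits.

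For (b), take a covering $\{U_i\to X\}_{i\in I}$ in $S_\ket$. Each $\ul{U_i}\to\ul{X}$ is open as a map of underlying spaces, since Kummer étale morphisms are flat and locally of finite presentation on underlying schemes (Illusie, Kato). The images therefore form an open cover of the quasi-compact space $\ul{X}$, and a finite subfamily is already surjective. For $S_\letale$, coverings are universally surjective families of log étale morphisms, but log étale morphisms need not be open. The route we would take is Nakayama's description (\cite[3.5]{Nak2}): any log étale covering is refined, after a log blow-up $X'\to X$, by a Kummer étale covering of $X'$. A log blow-up is proper and surjective, so $X'$ is again quasi-compact. A finite Kummer étale subcover of $X'$ then exists by the previous case. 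Lifting it back to the original family yields a finite subfamily $\{U_{i}\}_{i\in I_0}$ whose composite with $X'\to X$ is surjective. Hence the subfamily is surjective, and the same holds after any base change, so it is a covering. An alternative for this step is to use that universally surjective, quasi-compact families can be tested on the constructible topology. With that approach, quasi-compactness of the constructible topology on $\ul{X}$ gives the finite subcover directly, via the images of $\ul{U_i}$ being ind-constructible.

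Finally, the sheaf topoi are those of these sites, so SGA4, Exposé VI, 2.4.5 yields coherence. The main obstacle is (b) for the log étale site, specifically the verification that a finite subfamily remains universally surjective. We handle this via the log blow-up refinement from \cite{Nak2}. If that refinement is only available locally on $X$, we first reduce to finitely many strict étale charts using quasi-compactness of $\ul{X}$.
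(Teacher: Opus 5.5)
Your skeleton is the paper's: the SGA4 coherence criterion applied to $S_\ket$ and $S_\letale$ (your check of finite limits is fine), together with quasi-compactness of every object. The paper does not prove the latter; it quotes it from \cite[Lemma 3.14(1)]{Nak2}. Your hand proof of quasi-compactness contains one false step and one genuine gap.

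The false step is in the Kummer case. Kummer \'etale morphisms are \emph{not} flat on underlying schemes in general, because Kummer homomorphisms need not be integral. Take $P=\{(a,b)\in\N^2 : a\equiv b \bmod 2\}\subset Q=\N^2$. Over $\Z[1/2]$ the morphism $\A_Q\to\A_P$ is Kummer \'etale, but $\Spec\Z[1/2][x,y]\to\Spec\Z[1/2][x^2,xy,y^2]$ is finite of generic degree $2$. Its fibre over the origin is $\Z[1/2][x,y]/(x^2,xy,y^2)$, of rank $3$, so it is not flat. Openness still holds (the paper uses \cite[Remark 5.7]{MR2604925} for it). You can also avoid openness entirely: a Kummer \'etale covering only has to be surjective, so Chevalley's theorem and quasi-compactness of the constructible topology on $\ul{X}$ already give a finite subcover.

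The gap is in the log \'etale case, where the finite subfamily must stay \emph{universally} surjective. You invoke one log blow-up $X'\to X$ after which the whole, possibly infinite, family is refined by a Kummer \'etale covering. The local result \cite[Theorem III.2.6.7]{Ogu} does not give this: it produces a log blow-up for each quasi-compact $U_i$ separately. A single log blow-up serving a covering subfamily is only available once you know that finitely many $U_i$ already cover, so as used the argument is circular. Passing to finitely many charts on $X$ does nothing about the family being infinite.

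Your fallback via the constructible topology of $\ul{X}$ also fails. It only yields a finite subfamily that is surjective on $\ul{X}$, and for log \'etale families surjectivity does not imply universal surjectivity. For example, let $X=\A_{\N^2}=\Spec\Z[x,y]$, and let $U_1=\Spec\Z[x,y/x]$ and $U_2=\Spec\Z[x/y,y]$ be the two charts of the log blow-up at $V(x,y)$. Then $\{U_1\to X,\ X-V(x,y)\to X\}$ is surjective, but its pullback along $U_2\to X$ misses $V(x/y,y)\subset U_2$. Here one uses that log blow-ups are monomorphisms of fs log schemes, so $U_1\times_X U_2=U_1\cap U_2$.

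So a genuine compactness input for universal surjectivity is needed at this point. That is precisely the content of \cite[Lemma 3.14(1)]{Nak2}, and it is what the paper's proof cites.
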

\begin{proof}
Recall that $S_\ket$ and $S_\letale$ have finite colimits.
To conclude,
observe that every object in $S_\ket$ and $S_\letale$ is qcqs in the sense of topos by \cite[Lemma 3.14(1)]{Nak2}.
\end{proof}

For a qcqs fs log scheme $S$,
we say that $S$ has \emph{finite Kummer} (resp.\ \emph{log}) \emph{\'etale cohomological dimension} if there exists an integer $N$ such that
\[
H_\ket^i(S,\cF)=0\text{ (resp.\ } H_\letale^i(S,\cF)=0)
\]
for every torsion sheaf of abelian groups on $S_\ket$ (resp.\ $S_\letale$) and integer $i>N$.

\begin{prop}
\label{dim.1}
Let $S$ be a strictly local noetherian scheme,
and let $X$ be an fs log scheme of finite type over $S$.
Then $X$ has finite Kummer \'etale and log \'etale cohomological dimensions.
\end{prop}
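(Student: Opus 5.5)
Since $X$ is qcqs, the problem is local on $X$ for the strict étale topology, and each Kummer (resp.\ log) étale sheaf on $X$ can be studied through its restriction to strictly étale neighbourhoods. The plan is therefore to reduce to $X$ admitting a global fs chart $P$, and then compare Kummer étale cohomology with ordinary étale cohomology of the underlying schemes. A Čech/Mayer--Vietoris spectral sequence for a finite strict étale cover by charted pieces bounds the cohomological dimension of $X$. The bound is in terms of the dimensions of the finitely many pieces and their finite intersections, all of which are of finite type over $S$. So we may assume $X$ has a chart $P\to\cM_X$.

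For the Kummer étale case, let $\varepsilon\colon X_\ket\to X_\et$ be the canonical map of sites. We use the Leray spectral sequence $H^p_\et(\ul{X},R^q\varepsilon_*\cF)\Rightarrow H^{p+q}_\ket(X,\cF)$.

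\begin{itemize}
\item The stalks of $R^q\varepsilon_*\cF$ at a geometric point $\bar x$ are computed by Kato--Nakayama's description, \cite[2.6]{Nak1} / \cite[Prop.~4.2]{Ill}. This gives $(R^q\varepsilon_*\cF)_{\bar x}=H^q(\pi_1^{\log}(\bar x),\cF_{\bar x})$, where the relevant group is a quotient of $\prod_{\ell}\Hom(\ol{\cM}^\gp_{X,\bar x},\Z_\ell(1))$ (prime-to-characteristic part), together with possibly a $p$-part.
\item For torsion sheaves this profinite group, a product of copies of $\hat\Z'(1)$ of rank $\le \mathrm{rk}\,\ol{\cM}^\gp_{X,\bar x}\le \mathrm{rk}\,P^\gp$, has cohomological dimension at most $\mathrm{rk}\,P^\gp$ on $\ell$-torsion for $\ell$ invertible. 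For $p$-torsion coefficients when $p$ is the residue characteristic, the $p$-part of the Kummer group is a pro-$p$ group of rank $\le\mathrm{rk} P^\gp$, hence of cd $\le \mathrm{rk}P^\gp$ on $p$-torsion. Alternatively, Artin--Schreier style arguments give $\le$ one more.
\item So $R^q\varepsilon_*\cF=0$ for $q>r:=\mathrm{rk}P^\gp$.
\item Next we need finite étale cohomological dimension of $\ul{X}$, which is of finite type over a strictly local noetherian $S$. This is Gabber's theorem, cf.\ \cite[Exposé XVIII]{ILO}: $\mathrm{cd}(\ul X)\le 2\dim \ul X+1$, or $2\dim$ plus the bound from the closed point. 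Here torsion of all characteristics needs care: the $p$-torsion cd of a characteristic $p$ scheme is bounded by $\dim+1$.
\item Combining these gives the bound $N=\mathrm{cd}(\ul X)+r$.
\end{itemize}

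For the log étale case, we compare with the Kummer étale case using Nakayama's result \cite{Nak2} that $H^i_\letale(X,\cF)$ can be computed as a colimit over log blow-ups. Precisely, every log étale cover is refined by a log blow-up followed by a Kummer étale cover, so $H^i_\letale(X,\cF)\simeq\colim_{X'\to X}H^i_\ket(X',\cF|_{X'})$ over log blow-ups $X'\to X$. Each log blow-up $X'$ is of finite type over $S$ with $\dim X'\le\dim X$, and it can be chosen with a chart of the same rank $r$. Hence the Kummer bound $N$ is uniform in $X'$, and filtered colimits preserve the vanishing.

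The main obstacle is the uniform bound in the Kummer step. We must control $R^q\varepsilon_*$ including $p$-primary torsion, where the Kummer group has wild parts, and make sure the bound does not depend on the point. I would first try restricting to the stated torsion hypothesis via a limit over $\ell$, handling $p$-parts by noting the relevant group is pro-$p$ of finite rank.
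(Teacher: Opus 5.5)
Your plan is a hands-on version of what the paper does by citation. The paper gets finite \'etale cohomological dimension of $\ul{X}$ from \cite[Theorem 1.1.5]{CDetale}, which is the same Gabber input you use. It then transfers this to $X_\ket$ and $X_\letale$ via \cite[Theorem 7.2]{Nak2}, and your Leray/log-blow-up argument is in effect reproving that theorem. However, two steps fail as written.

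\emph{First gap: the reduction to a global chart.} Cohomological dimension is not local for the strict \'etale topology. For an \'etale cover $U\to X$ one has $U\times_X U\neq U$, so the \v{C}ech nerve does not truncate, and $\check H^p(\mathcal U,\mathcal H^q)$ can be nonzero for infinitely many $p$. For instance, every term of the \v{C}ech nerve of $\Spec\mathbb{C}\to\Spec\R$ has cohomological dimension $0$, while $H^i(\Spec\R,\Z/2)\neq 0$ for all $i$. In the Kummer case you do not need this reduction at all. The Leray argument is global, and you can take $r:=\max_{\bar x}\mathrm{rk}\,\ol{\cM}^\gp_{X,\bar x}$, which is finite by quasi-compactness. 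Also, the log inertia at $\bar x$ is exactly $\Hom(\ol{\cM}^\gp_{X,\bar x},\hat{\Z}'(1))$ with $\hat{\Z}'$ prime to $\mathrm{char}\,k(\bar x)$. There is no $p$-part, because Kummer \'etale covers have index invertible on the base. So the ``wild'' obstacle you worry about does not arise on the log side, and the only $p$-torsion input is $\mathrm{cd}(\ul{X})$, taken from Gabber exactly as in the paper.

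\emph{Second gap: the dimension bound for log blow-ups.} In the log \'etale step, the claim $\dim X'\le\dim X$ is false. Log blow-ups are not birational on underlying schemes when the log structure is nontrivial at generic points. For example, the log blow-up of $\pt_{\N^2}$ along the ideal $\N^2-\{0\}$ is the strict base change of $\mathrm{Bl}_0\A^2\to\A^2$ to the origin, i.e.\ a $\P^1$. What is true is that the fibres of $\T_\Sigma\to\A_P$ have dimension $<r$, so $\dim X'\le\dim X+r$. This still gives the uniform bound on $\mathrm{cd}(\ul{X'})$ that you need.

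Since the global chart is not available, this step should be run stalkwise:
\begin{enumerate}
\item apply Leray to $X_\letale\to\ul{X}_\et$;
\item identify $(R^q\varepsilon_*\cF)_{\bar x}$ with $H^q_\letale(X_{(\bar x)},\cF)$ by passing to the limit, which for a single sheaf needs no cohomological-dimension hypothesis, so there is no circularity;
\item use the colimit over log blow-ups on the strict henselization $X_{(\bar x)}$, which has a chart and over which each log blow-up is of finite type of dimension $\le\dim X+r$.
\end{enumerate}
The colimit formula $H^q_\letale=\colim_{X'}H^q_\ket(X',-)$ is itself Nakayama's description of the log \'etale topos via log blow-ups and Kummer \'etale covers. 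It should be cited rather than derived by a one-line ``so''. With these repairs your argument goes through.
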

\begin{proof}
We know that $\ul{X}$ has finite \'etale cohomological dimension by \cite[Theorem 1.1.5]{CDetale}.
The conclusion follows from this result and \cite[Theorem 7.2]{Nak2}.
\end{proof}

\begin{prop}
\label{dim.6}
Let $S$ be a qcqs fs log scheme having finite Kummer \'etale cohomological dimension,
and let $\Lambda$ be a torsion commutative ring.
Then the functor $R\Gamma_\ket(S,-)$ on $\rD_\ket(S,\Lambda)$ preserves colimits.
The same holds in the log \'etale case too.
\end{prop}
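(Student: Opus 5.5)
Since $\rD_\ket(S,\Lambda)$ is compactly generated and stable, and $R\Gamma_\ket(S,-)$ is exact, it suffices to show that $R\Gamma_\ket(S,-)$ preserves arbitrary direct sums. Equivalently, for every family $\{\cF_\alpha\}$ and every integer $n$, the natural map
\[
\textstyle\bigoplus_\alpha H^n_\ket(S,\cF_\alpha)\to H^n_\ket(S,\bigoplus_\alpha \cF_\alpha)
\]
should be an isomorphism. Here $\bigoplus$ is taken in the derived $\infty$-category, so it is computed termwise on complexes. The case of sheaves in a single degree should follow from the coherence of $\Sh(S_\ket)$ (Proposition \ref{dim.11}): on a coherent topos, cohomology of a qcqs object commutes with filtered colimits of sheaves, by \cite[Expos\'e VI]{SGA42}. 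In particular $H^i_\ket(S,-)$ commutes with direct sums of abelian sheaves for every $i$. Since $\Lambda$ is torsion, every sheaf of $\Lambda$-modules is a torsion abelian sheaf, and its $\Lambda$-module cohomology agrees with its abelian group cohomology.

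The passage from single sheaves to unbounded complexes uses the finite cohomological dimension $N$. Let $\cF$ be a complex and write $\tau$ for truncations. There is a conditionally convergent spectral sequence $E_2^{p,q}=H^p_\ket(S,\mathcal{H}^q\cF)\Rightarrow H^{p+q}_\ket(S,\cF)$. Because $H^p=0$ for $p>N$, we have $E_2^{p,q}=0$ outside $0\le p\le N$, so the spectral sequence lies in a strip of finite width. It therefore converges strongly, and only finitely many differentials are nonzero. A cleaner way to organize this avoids spectral sequences. First, finite dimension implies that $H^n_\ket(S,\cF)\to H^n_\ket(S,\tau^{\ge n-N-1}\cF)$ is an isomorphism. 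The fiber of this map is $\tau^{<n-N-1}\cF$, whose cohomology vanishes in degrees $\ge n$ and $\ge n+1$; to see this, use the Postnikov/Milnor argument, whose key input is that the hypercomplete derived category is left-complete with finite dimension. Second, $H^n_\ket(S,\tau^{\ge n-N-1}\cF)=H^n_\ket(S,\tau^{\le n}\tau^{\ge n-N-1}\cF)$, because the fiber is coconnective above degree $n$. This reduces $H^n$ to that of a bounded complex with finitely many cohomology sheaves. Truncations commute with direct sums, and for bounded complexes we can induct on the length using the five lemma and the sheaf case. Together these give the claim.

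The main obstacle is the first step: showing $R\Gamma(S,\tau^{<m}\cF)$ is concentrated in degrees $<m+N+1$ for unbounded-below $\cF$. This requires that $\rD_\ket(S,\Lambda)$ is left-complete, i.e.\ $\cF\simeq\lim\tau^{\ge -k}\cF$. I would get this from the Postnikov completeness criterion for hypercomplete sheaves on a site where every object has cohomological dimension $\le N$. Every object of $S_\ket$ is qcqs and Kummer \'etale over $S$, so the bound must hold uniformly over objects of $S_\ket$; if necessary, replace $S$ by objects of $S_\ket$ or localize. Granting this, the Milnor sequence for $\lim_k R\Gamma(S,\tau^{\ge -k}\tau^{<m}\cF)$ gives the vanishing, because the transition maps are isomorphisms on $H^j$ for $j\ge m+N$. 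The log \'etale case is identical, using $S_\letale$ and the log \'etale part of Proposition \ref{dim.11}.
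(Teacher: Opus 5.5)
Your architecture (coherence of $\Sh(S_\ket)$ for single sheaves, finite cohomological dimension to reach unbounded complexes, then exactness) is exactly the content of \cite[Lemma 1.1.7]{CDetale}. The paper simply cites that lemma for filtered colimits and then observes that an exact functor preserving filtered colimits preserves all colimits. Your reduction to direct sums is equivalent, and it does not need compact generation, which you have not established anyway.

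The gap is in the step you yourself single out. You derive the vanishing of $H^i_\ket(S,\cG)$ for $i>m+N$ and $\cG\in\rD_\ket(S,\Lambda)^{\le m}$ from left-completeness of $\rD_\ket(S,\Lambda)$. You want that left-completeness from a cohomological dimension bound holding uniformly on all objects of $S_\ket$. The hypothesis only bounds the cohomological dimension of $S$ itself, and this does not propagate to $S_\ket$. For example, let $S$ be the spectrum of a strictly henselian discrete valuation ring with trivial log structure, and let $\ell$ be prime to the residue characteristic. Then $\Gamma(S,-)$ is the stalk at the closed point, so it is exact and $N=0$. Yet the generic point $\Spec K\in S_\ket$ has $H^1(\Spec K,\mu_\ell)\simeq K^\times/K^{\times\ell}\neq 0$. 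For an arbitrary qcqs $S$ as in the statement there need not be any uniform bound at all, and ``replace $S$ by objects of $S_\ket$ or localize'' does not produce one. So the Postnikov completeness criterion is unavailable, and the key step is unproved as written.

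The vanishing you need is nevertheless true using only the bound $N$ for $S$, with no completeness. Choose a K-injective resolution $\cG\to\mathcal{J}$ with injective terms; this is possible in the Grothendieck abelian category of sheaves of $\Lambda$-modules. Set $Z^k:=\ker(\mathcal{J}^k\to\mathcal{J}^{k+1})$. Since $\mathcal{H}^k\cG=0$ for $k>m$, the sequences $0\to Z^k\to\mathcal{J}^k\to Z^{k+1}\to 0$ are exact for $k\ge m$.

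For $i>m$, $H^i_\ket(S,\cG)=H^i\Gamma(S,\mathcal{J})$ is the cokernel of $\Gamma(S,\mathcal{J}^{i-1})\to\Gamma(S,Z^i)$, which is identified with $H^1_\ket(S,Z^{i-1})$. Dimension shifting along the above sequences gives $H^1_\ket(S,Z^{i-1})\simeq H^{i-m}_\ket(S,Z^m)$, and this vanishes once $i>m+N$. This is the standard fact that a left exact functor of finite cohomological dimension is way-out on unbounded complexes.

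With this in place of your Milnor-sequence argument, the rest of your proof goes through: $H^n_\ket(S,\cF)\simeq H^n_\ket(S,\tau^{\le n}\tau^{\ge n-N-1}\cF)$, truncations commute with direct sums, and induction on length from the sheaf case finishes it.
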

\begin{proof}
By \cite[Lemma 1.1.7]{CDetale},
$R\Gamma_\ket(S,-)$ preserves filtered colimits.
Since it preserves finite limits and hence finite colimits,
it preserves colimits.
The same argument applies to the log \'etale case.
\end{proof}

For an fs log scheme $S$,
a \emph{geometric point $\ol{s}$ of $S$} is defined to be a strict morphism $\ol{s} \to S$ such that the underlying scheme of $\ol{s}$ is a geometric point of $\ul{S}$.

\begin{prop}
\label{dim.2}
Let $\Lambda$ be a torsion commutative ring,
let
\[
\begin{tikzcd}
X'\ar[d,"f'"']\ar[r,"g'"]&
X\ar[d,"f"]
\\
S'\ar[r,"g"]&
S
\end{tikzcd}
\]
be a cartesian square of noetherian fs log schemes,
where $g$ exhibits $S'$ as a strict henselization of $S$ at a geometric point $\ol{s}$.
Assume that $f$ is of finite type.
\begin{itemize}
\item[\textup{(1)}]
The functor
\[
g'^*\colon \rC(\Sh(X_\ket),\Lambda)\to \rC(\Sh(X_\ket'),\Lambda)
\]
preserves the fibrations of the projective model structures,
and we have
\[
\colim_{U} R\Gamma_\ket(U,\cF\vert_U)
\simeq
R\Gamma_\ket(X',\cF\vert_{X'})
\]
for every $\cF\in \rD_\ket(X,\Lambda)$,
where the colimit runs over the strict \'etale neighborhoods of $\ol{s}$.
\item[\textup{(2)}]
We have
\[
g^*f_*
\simeq
f_*'g'^*
\colon
\rD_\ket(X,\Lambda)\to \rD_\ket(S',\Lambda).
\]
\end{itemize}
The analogous statements hold for the log \'etale case too.
\end{prop}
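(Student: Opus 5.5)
The plan is to reduce everything to a limit formula for cohomology along the cofiltered system $S'=\lim_V V$, where $V$ ranges over the affine strict \'etale neighborhoods of $\ol{s}$ (with the log structures pulled back from $S$). Put $X_V:=X\times_S V$, so that $X'=\lim_V X_V$ with affine strict \'etale transition maps.

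\textbf{Part (1): fibrations.} The functor $g'^*$ is a left adjoint, so I will check the claim against generating objects. A fibration in the projective model structure on $\rC(\Sh(X_\ket),\Lambda)$ is a degreewise surjection on sections over every $U\in X_\ket$; more precisely, it is a map with the right lifting property against the generating trivial cofibrations built from the $\Lambda_U$. The key observation is that every qcqs $U'\in X'_\ket$ descends to some $U_V\in (X_V)_\ket$. This should follow from the standard limit formalism for fs log schemes of finite presentation, applied to Kummer \'etale morphisms, noting that Kummer \'etaleness descends along limits. The same descent holds for morphisms between such objects. It gives
\[
(g'^*\cF)(U')\simeq \colim_{W\to V} \cF(U_V\times_V W)
\]
for every sheaf $\cF$, since the transition maps are strict \'etale, hence Kummer \'etale. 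Indeed, the presheaf pullback formula already computes a sheaf: coverings of $U'$ are also descended by the limit formalism, and the topoi are coherent by Proposition \ref{dim.11}. A filtered colimit of surjections is surjective, so $g'^*$ preserves fibrations.

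\textbf{Part (1): the cohomology formula.} Since $g'^*$ is left Quillen and, by the above, also preserves fibrations, it preserves fibrant objects. It therefore computes the derived pullback on fibrant replacements, and the displayed formula follows: take a fibrant replacement of $\cF$ and apply the sections formula with $U'=X'$. The unbounded case needs no separate argument, because fibrant replacement is already global in the projective model structure. The only place where care is needed is that, in the projective structure on an arbitrary site, fibrant objects are those satisfying hyperdescent. Here I will use that $g'^*$ preserves hypercovers in the coherent setting. Alternatively, one can run the argument for bounded below complexes, using SGA4 VI limit results for coherent topoi, and then pass to the unbounded case via the finiteness of cohomological dimension in Proposition \ref{dim.1}.

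\textbf{Part (2).} It suffices to check the isomorphism on global sections over every $W'\in S'_\ket$, and by descent it is enough to treat $W'$ qcqs; each such $W'$ descends to some $W_V$. The left-hand side evaluated at $W'$ is $\colim R\Gamma(X_V\times_V W_V,\cF)$ by (1) applied on $S$, that is, using that $g^*$ preserves fibrant objects. The right-hand side is $R\Gamma(X'\times_{S'}W',\cF)$, and this equals the same colimit by (1) applied to the base-changed square. Hence the two sides agree, and the natural comparison map is an isomorphism. The log \'etale case is verbatim, using the log \'etale parts of Proposition \ref{dim.11} and the corresponding limit descent for log \'etale morphisms.

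\textbf{Main obstacle.} I expect the hard step to be the descent of Kummer and log \'etale objects and coverings along the limit $S'=\lim V$. In the log \'etale case this is harder, because log \'etale morphisms need not be of finite presentation in the naive sense and their coverings use universal surjectivity. I would first try to reduce to charts and to the description in \cite[Lemma 3.14]{Nak2}, where objects are log blow-ups followed by Kummer \'etale maps. Both pieces are finitely presented over $X$ and should descend.
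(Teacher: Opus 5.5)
There is a genuine gap in your Part (1), and it sits exactly where the content of the statement lies. The projective model structure on $\rC(\Sh(X_\ket),\Lambda)$ is the local one, whose weak equivalences are quasi-isomorphisms of sheaves. So a fibration is not just a sectionwise degreewise surjection. Its kernel, being a pullback of it, must be fibrant, i.e.\ satisfy hyperdescent. Your filtered-colimit-of-surjections argument only checks the first half.

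For fibrant $K$, the complex $g'^*K$ has sections $U'\mapsto \colim_V K(U_V)\simeq \colim_V R\Gamma_\ket(U_V,K)$. Hence fibrancy of $g'^*K$ is \emph{equivalent} to the displayed colimit formula for all qcqs $U'\in X'_\ket$. Deducing the formula from ``$g'^*$ preserves fibrant objects'' is therefore circular.

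The remark that $g'^*$ preserves hypercovers goes in the wrong direction. You need descent of $g'^*K$ along hypercovers of $X'$, and these do not descend to a finite stage. Moreover, the totalizations involved do not commute with $\colim_V$ for unbounded complexes. So your claim that the unbounded case needs no separate argument is false. Already on $(\Spec\R)_\et$ with $\Lambda=\Z/2$, the sectionwise colimit of fibrant models of $\bigoplus_{n=1}^N\Lambda[n]$ is not a hypersheaf, since $R\Gamma(C_2,-)$ turns $\bigoplus_{n\geq 1}\Lambda[n]$ into a product.

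What is missing is the finite cohomological dimension of $X'$ and of its qcqs Kummer (resp.\ log) \'etale objects. These are of finite type over the strictly local $\ul{S'}$, so Proposition \ref{dim.1} applies. This is the input the paper feeds into \cite[Lemma 1.1.12]{CDetale}, together with coherence (Proposition \ref{dim.11}) and the identification $X'_\ket\simeq \lim_i (X_i)_\ket$ of \cite[Corollary 3.5.7]{AHLS}. In particular, the descent of objects you single out as the main obstacle is simply quoted.

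Your ``alternatively'' sentence is the right idea, but it has to be the main argument. For instance:
\begin{enumerate}
\item[(a)] Replace $K$ by a K-injective complex $I$ with injective terms.
\item[(b)] By \cite[Expos\'e VI]{SGA42}, each $g'^*I^n$ is acyclic on every qcqs $U'$.
\item[(c)] Write $g'^*I$ as the colimit of its brutal truncations, which are bounded below complexes of acyclics.
\item[(d)] Use that $R\Gamma_\ket(U',-)$ commutes with colimits by finite cohomological dimension (Proposition \ref{dim.6}). This gives $\Gamma(U',g'^*I)\simeq R\Gamma_\ket(U',g'^*I)$.
\end{enumerate}
This yields at once the fibrancy of $g'^*K$ and the colimit formula. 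Once (1) is established in this form, for any such cofiltered system whose limit is of finite type over $\ul{S'}$, your argument for (2) via the systems $X_V\times_V W_V$ goes through.
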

\begin{proof}
(1) There exists a projective system $\{S_i\}_{i\in I}$ with strict affine \'etale transition maps such that $S'\simeq \lim_{i\in I} S_i$.
By \cite[Corollary 3.5.7]{AHLS},
we have
\[
X_\ket'
\simeq
\lim_i (X_i)_{\ket},
\;
X_\letale'
\simeq
\lim_i (X_i)_{\letale},
\]
where $X_i:=X\times_S S_i$.
Hence we only need to check the conditions (i)--(iv) in \cite[Lemma 1.1.12]{CDetale}.
The condition (i) follows from Proposition \ref{dim.11},
the conditions (ii) and (iii) are obvious,
and the condition (iv) follows from Proposition \ref{dim.1}.

(2) Argue as in \cite[Lemme 4.2]{Ayo14} or \cite[Theorem 1.1.14]{CDetale}, but use part (1) as well.
\end{proof}

Using the log blow-up invariance of Kummer \'etale cohomology due to Fujiwara and Kato \cite[Theorem 6.2]{Ill},
Nakayama \cite[Theorem 9.4(b)]{Ill}, \cite[Proposition 5.4(2)]{Nak2} proved that the functor
\[
\kappa^*\colon \rD_\ket^+(S,\Lambda)\to \rD_\letale^+(S,\Lambda)
\]
induced by the inclusion $S_\ket\to S_\letale$ is fully faithful for every fs log scheme $S$ and torsion commutative ring $\Lambda$.
If we assume that $S$ is noetherian, then we can extend this result to the unbounded case as follows.

\begin{prop}
\label{dim.5}
Let $S$ be a noetherian fs log scheme,
and let $\Lambda$ be a torsion commutative ring.
Then the functor
\[
\kappa^*\colon \rD_\ket(S,\Lambda)\to \rD_\letale(S,\Lambda)
\]
is fully faithful.
\end{prop}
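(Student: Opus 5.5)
Write $\kappa_*$ for the right adjoint of $\kappa^*$. Showing that $\kappa^*$ is fully faithful amounts to showing that the unit $\cF\to \kappa_*\kappa^*\cF$ is an isomorphism for every $\cF\in \rD_\ket(S,\Lambda)$. The bounded below case is Nakayama's theorem \cite[Theorem 9.4(b)]{Ill}, \cite[Proposition 5.4(2)]{Nak2}. The plan is to reduce the unbounded case to it by working at stalks at geometric points and using finite cohomological dimension.

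\textbf{Step 1: reduction to stalks.} The question is local on $S_\ket$, and isomorphisms in $\rD_\ket(S,\Lambda)$ can be detected on sections. It therefore suffices to show that for every $U\in S_\ket$ the map
\[
R\Gamma_\ket(U,\cF)\to R\Gamma_\letale(U,\kappa^*\cF)
\]
is an isomorphism, and we may replace $S$ by $U$. Since hypercomplete/Postnikov issues make this awkward directly, I would instead test on stalks. For a geometric point $\ol{s}$ of $S$, let $g\colon S'\to S$ be the strict henselization at $\ol{s}$. By Proposition \ref{dim.2}(1), applied to $f=\id$ in both the Kummer étale and log étale versions, the colimit of $R\Gamma$ over strict étale neighborhoods of $\ol{s}$ computes $R\Gamma(S',g^*(-))$ in both topologies. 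Moreover, $\kappa^*$ commutes with $g^*$. Strict étale neighborhoods together with Kummer étale covers control the Kummer étale topology, and every $U\in S_\ket$ is again noetherian. So it is enough to show that
\[
R\Gamma_\ket(S',\cG)\to R\Gamma_\letale(S',\kappa^*\cG)
\]
is an isomorphism for every $\cG\in \rD_\ket(S',\Lambda)$, applied to the restrictions of $\cF$ to all $U$ and their strict henselizations.

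\textbf{Step 2: the strictly local case.} By Proposition \ref{dim.1} with $X=S'$ over itself, $S'$ has finite Kummer étale and finite log étale cohomological dimension. Proposition \ref{dim.6} then shows that both $R\Gamma_\ket(S',-)$ and $R\Gamma_\letale(S',\kappa^*(-))$ preserve colimits; $\kappa^*$ does so as a left adjoint. Write $\cG\simeq \colim_n \tau^{\ge -n}\cG$. This is a legitimate colimit presentation: the $t$-structure is compatible with filtered colimits, and $\cG\simeq\colim_n\tau^{\geq -n}\cG$ holds at the level of sheaves. Hence the map for $\cG$ is the colimit of the maps for the bounded below truncations $\tau^{\ge -n}\cG$, and each of these is an isomorphism by Nakayama's bounded below theorem.

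\textbf{Main obstacle.} The delicate point is Step 1: deducing the global statement from the strictly local one. One has to be careful about non-hypercompleteness, since $\rD$ here is the derived category and hence automatically hypercomplete. One also has to check that the family of functors "pass to a Kummer étale $U$, then take the colimit of sections over strict étale neighborhoods" really is conservative on $\rD_\ket$. I would first argue on cohomology sheaves. The map $\cF\to\kappa_*\kappa^*\cF$ is an isomorphism iff it induces isomorphisms on the Kummer étale sheafifications of $U\mapsto H^i$ of sections. Such a sheaf vanishes iff its stalks at all Kummer-geometric points vanish. These stalks are computed through strict henselizations of Kummer étale $U$, and there Step 2 applies. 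The noetherian and finite type hypotheses are preserved along the way, and these are exactly what Propositions \ref{dim.1} and \ref{dim.2} require.
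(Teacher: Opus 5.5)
\textbf{Step 2 fails as written.} In cohomological indexing the canonical truncations come with maps $\cG\to\tau^{\geq -n-1}\cG\to\tau^{\geq -n}\cG$. So they form a tower (an inverse system), not a direct system, and $\cG$ is not their colimit. What one could hope for is $\cG\simeq\lim_n\tau^{\geq -n}\cG$. Passing to that limit would need $R\Gamma_\letale(S',\kappa^*(-))$ to commute with sequential limits, together with Postnikov completeness on both sides. Proposition \ref{dim.6} gives neither, and $\kappa^*$ is a left adjoint.

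Compatibility of the $t$-structure with filtered colimits gives $\cG\simeq\colim_n\tau^{\leq n}\cG$ instead. Those truncations are bounded \emph{above}, so Nakayama's $\rD^+$ theorem does not apply to them.

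The repair is easy. Represent $\cG$ by a complex and use the brutal truncations $\sigma^{\geq -n}\cG$. These are bounded-below subcomplexes whose union is $\cG$. Since filtered colimits of sheaves are exact, $\cG\simeq\colim_n\sigma^{\geq -n}\cG$ in $\rD_\ket(S',\Lambda)$, and Nakayama's theorem applies to each term. Equivalently, as the paper does, use that both sides are exact and colimit-preserving to reduce to $\cG$ concentrated in a single degree.

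\textbf{Comparison of Step 1.} With this repair your argument is correct, and your Step 1 is a genuinely different reduction from the paper's. The paper proves the base change $g^*\kappa_*\simeq\kappa_*g^*$ for a strict henselization $g\colon S'\to S$, via the projective model structures and $S'\simeq\lim S_i$. It then replaces $S$ by $S'$ and tests against $\Lambda_U$ for all $U\in S'_\ket$.

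You instead show that each cohomology sheaf of the cone $C$ of $\cF\to\kappa_*\kappa^*\cF$ vanishes. Every section of $U\mapsto H^iR\Gamma_\ket(U,C)$ dies on a strict \'etale neighborhood of each geometric point of $U$. To see this you apply Proposition \ref{dim.2}(1) separately to $\cF$ (Kummer \'etale) and $\kappa^*\cF$ (log \'etale), using that $\kappa^*$ commutes with $g^*$. This bypasses the base change for $\kappa_*$, and needs the strictly local statement only for the strict henselization $U'$ itself rather than for all objects of $U'_\ket$.

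One small correction: your remark that stalks at Kummer geometric points are computed by strict henselizations is inaccurate. It is also unnecessary, since the argument only uses that sections are strict-\'etale-locally zero.
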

\begin{proof}
We need to show
\[
\cF
\xrightarrow{\simeq}
\kappa_* \kappa^* \cF
\]
for every $\cF\in \rD_\ket(S,\Lambda)$.
Consider a strict henselization $g\colon S'\to S$ at a geometric point $\ol{s}$ of $S$.
It suffices to show
\begin{equation}
\label{dim.5.1}
g^*\cF
\xrightarrow{\simeq}
g^*\kappa_*\kappa^* \cF.
\end{equation}

As in \cite[Proof of Theorem 1.1.14]{CDetale},
the functors of chain complexes of sheaves of $\Lambda$-modules
\[
g^*\colon \rC_\ket(S,\Lambda)
\to
\rC_\ket(S',\Lambda),
\;
g^*\colon\rC_\letale(S,\Lambda)
\to
\rC_\letale(S',\Lambda)
\]
preserve finite limits, weak equivalences, and fibrations of the projective model structures.
Moreover,
the functors
\[
\kappa_*\colon \rC_\letale(S,\Lambda)
\to
\rC_\ket(S,\Lambda),
\;
\kappa_*\colon \rC_\letale(S',\Lambda)
\to
\rC_\ket(S',\Lambda)
\]
are right Quillen functors for the projective model structures.
Hence
\begin{equation}
\label{dim.5.2}
R(g^*\kappa_*)
\simeq
Rg^* R\kappa_*
\simeq
g^* R\kappa_*,
\;
R(\kappa_* g^*)
\simeq
R\kappa_* Rg^*
\simeq
R\kappa_* g^*.
\end{equation}
If we write $S'$ as $\lim S_i$ with strict \'etale affine transition morphisms,
then
\[
g_i^*\kappa_*
\simeq
\kappa_*g_i^*
\colon
\rC_\letale(S,\Lambda)\to \rC_\ket(S_i,\Lambda),
\]
where $g_i\colon S_i\to S$ is the induced strict \'etale morphism.
By taking limits on both sides,
we get
\[
g^*\kappa_*
\simeq
\kappa_*g^*
\colon
\rC_\letale(S,\Lambda)\to \rC_\ket(S',\Lambda).
\]
Combining this with \eqref{dim.5.2},
we get
\[
g^*\kappa_*
\simeq
\kappa_*g^*
\colon
\rD_\letale(S,\Lambda)\to \rD_\ket(S',\Lambda).
\]

Hence to show \eqref{dim.5.1},
replacing $S$ by $S'$,
we reduce to the case where $\ul{S}$ is strictly local.
Then $S$ has finite log \'etale and Kummer \'etale cohomological dimensions by Proposition \ref{dim.1}.
It suffices to show
\[
\hom_{\rD_\ket(S,\Lambda)}(\Lambda_U,\cF)
\xrightarrow{\simeq}
\hom_{\rD_\letale(S,\Lambda)}(\Lambda_U,\kappa^*\cF)
\]
for every $U\in S_\ket$.
By adjunction,
replacing $S$ by $U$,
it suffices to show
\[
R\Gamma_\ket(S,\cF)
\xrightarrow{\simeq}
R\Gamma_\letale(S,\kappa^*\cF).
\]
Both sides commute with colimits on $\cF$ by Proposition \ref{dim.6}.
Hence we reduce to the case where $\cF$ is concentrated in degree $0$.
This case is proven in \cite[Proposition 5.4(2)]{Nak2}.
\end{proof}

\begin{prop}
\label{dim.3}
Let $f\colon X\to S$ be a morphism of noetherian fs log schemes,
and let $\Lambda$ be a torsion commutative ring.
If $f$ is of finite type,
then $f_*\colon \rD_\letale(X)\to \rD_\letale(S,\Lambda)$ and $f_*\colon \rD_\ket(X)\to \rD_\ket(S,\Lambda)$ preserve colimits.
\end{prop}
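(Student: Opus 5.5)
Since $f_*$ is exact between stable $\infty$-categories, it preserves finite colimits. So it suffices to show that $f_*$ preserves filtered colimits, or equivalently arbitrary direct sums. Preservation of colimits can be checked after pulling back along all strict henselizations of $S$. This is because the family of functors $g^*$, for $g\colon S'\to S$ a strict henselization at a geometric point $\ol{s}$, is jointly conservative on $\rD_\ket(S,\Lambda)$ and on $\rD_\letale(S,\Lambda)$. Indeed, stalks at geometric points of log schemes are computed, via the Kummer/log étale local structure, as colimits over strict étale neighborhoods followed by Kummer/log étale covers of the strictly local log scheme. The $g^*$ also commute with colimits. By Proposition \ref{dim.2}(2), $g^*f_*\simeq f'_*g'^*$, and $g'^*$ preserves colimits. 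Hence we reduce to the case where $\ul{S}$ is strictly local noetherian and $X$ is of finite type over $S$. This is the same pattern as in the proof of Proposition \ref{dim.5}.

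In this strictly local situation, we check the claim on sections against generators. The objects $\Lambda_U$ with $U\in S_\ket$ (resp.\ $S_\letale$) generate $\rD_\ket(S,\Lambda)$ (resp.\ $\rD_\letale(S,\Lambda)$) under colimits and shifts. So it suffices to show that, for each such $U$, the functor
\[
\cF\mapsto \hom(\Lambda_U,f_*\cF)\simeq R\Gamma_\ket(X\times_S U,\cF\vert_{X\times_S U})
\]
preserves colimits, and similarly in the log étale case. Here $X\times_S U$ is an fs log scheme of finite type over $U$, and hence of finite type over $\ul{S}$, which is strictly local noetherian. By Proposition \ref{dim.1}, $X\times_S U$ has finite Kummer étale and log étale cohomological dimension. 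By Proposition \ref{dim.6}, $R\Gamma$ on it preserves colimits. Restriction along the Kummer/log étale morphism $X\times_S U\to X$ also preserves colimits, so the composite does too. This concludes the argument.

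The main obstacle is the first reduction, namely joint conservativity of the strict henselization pullbacks together with the base change isomorphism. The base change is supplied by Proposition \ref{dim.2}(2), which is stated for arbitrary $\cF$ in the unbounded categories, so it applies as needed. Conservativity should follow because a Kummer/log étale sheaf whose restriction to every $S'$ vanishes vanishes on every object of the site. This in turn holds because the sections of such a sheaf over a qcqs object are the colimit of the sections over its strict étale neighborhoods, by the limit statement $X'_\ket\simeq\lim_i (X_i)_\ket$ used in the proof of Proposition \ref{dim.2}. One then applies this to the homotopy sheaves of the complex.
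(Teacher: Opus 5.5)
Your proposal is correct and follows essentially the paper's route. The paper's one-line proof points to the arguments of Ayoub and Cisinski--D\'eglise: reduce to a strictly local base via the base change of Proposition~\ref{dim.2}(2), then use finite cohomological dimension through Propositions~\ref{dim.1} and~\ref{dim.6}. That is exactly what you do, and your joint-conservativity argument for the strict henselization pullbacks is fine.

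One line is missing in your second step. Knowing that each functor $\cF\mapsto \hom(\Lambda_U,f_*\cF)$ preserves colimits is not enough on its own to conclude that $f_*$ does. You also need $\hom(\Lambda_U,-)\simeq R\Gamma_\ket(U,-)$ (resp.\ $R\Gamma_\letale(U,-)$) to commute with colimits on the base, i.e.\ that $\Lambda_U$ is compact. This follows from the same two propositions, because $U$ is itself of finite type over the strictly local scheme $\ul{S}$.
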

\begin{proof}
Argue as in \cite[Lemme 4.2]{Ayo14} or \cite[Corollary 1.1.15]{CDetale},
but use Propositions \ref{dim.1} and \ref{dim.2}(2) also.
\end{proof}

For a log \'etale (resp.\ Kummer \'etale) morphism $f\colon X\to S$ of qcqs fs log schemes and a commutative ring $\Lambda$,
let
\[
f_\sharp \colon \rD_\letale(X,\Lambda)\to \rD_\letale(S,\Lambda) \text{ (resp.\ }f_\sharp \colon \rD_\ket(X,\Lambda)\to \rD_\ket(S,\Lambda))
\]
be the left adjoint to
\[
f^*\colon \rD_\letale(S,\Lambda)\to \rD_\letale(X,\Lambda) \text{ (resp.\ }f^*\colon \rD_\ket(S,\Lambda)\to \rD_\ket(X,\Lambda)).
\]

\begin{prop}
\label{dim.7}
Let $i\colon Z\to S$ be a strict closed immersion of noetherian fs log schemes,
let $j$ be its open complement,
and let $\Lambda$ be a torsion commutative ring.
Then we have a fiber sequence
\[
j_\sharp j^*\to \id \to i_*i^*\colon \rD_\letale(S,\Lambda)\to \rD_\letale(S,\Lambda).
\]
The analogous statement holds for the Kummer \'etale case too.
\end{prop}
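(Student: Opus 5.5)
We have the natural composite $j_\sharp j^*\to \id\to i_*i^*$, given by the counit of $j_\sharp\dashv j^*$ and the unit of $i^*\dashv i_*$. Our plan is to show that it is null and that the induced map $\cok(j_\sharp j^*\cF\to\cF)\to i_*i^*\cF$ is an isomorphism for every $\cF\in\rD_\letale(S,\Lambda)$. This is checked stalkwise, reducing to the classical exactness of $0\to j_!j^*\to\id\to i_*i^*\to 0$ on abelian sheaves.

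\textbf{Step 1: the composite is null.} By adjunction, $j_\sharp j^*\to i_*i^*$ corresponds to a map $j^*\cF\to j^*i_*i^*\cF$. We show $j^*i_*\simeq 0$. Since $j$ is strict (an open immersion), $j^*$ is restriction to the log étale site of $S-Z$. For $V\in (S-Z)_\letale$ we have $V\times_S Z=\emptyset$, so $\Gamma(V,i_*\mathcal{G})=\Gamma(\emptyset,\mathcal{G})=0$; this is a statement about underived pushforward of fibrant complexes, hence it holds for the derived $i_*$ as well.

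\textbf{Step 2: identifying $j_\sharp$ and $i_*$ at the sheaf level.} Here $j_\sharp$ is extension by zero, which is exact, so $j_\sharp j^*\cF$ is computed termwise. For $i_*$ we want exactness on abelian sheaves, namely that $R^qi_*=0$ for $q>0$, so that $i_*$ is computed termwise and commutes with the stalk functors. The cleanest route is stalkwise: for a geometric point $\ol{s}$ of $S$ with strict henselization $g\colon S'\to S$, Proposition~\ref{dim.2}(2) (applied with $f=i$ finite type) gives $g^*i_*\simeq i'_*g'^*$. This reduces the problem to $\ul{S}$ strictly local. There, if $\ol{s}\notin Z$ then $Z'=\emptyset$. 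If $\ol{s}\in Z$, then $Z'$ is strictly local with the pulled-back log structure, and we need the counit $i'^*i'_*\to\id$ to be an isomorphism. The log étale topology is not local on points, so this is where care is needed.

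\textbf{Main obstacle: conservativity.} Detecting isomorphisms on $\rD_\letale(S,\Lambda)$ via strict henselizations is what the proof of Proposition~\ref{dim.5} uses: it reduces to the pullbacks $g^*$ along strict henselizations, and there checks $R\Gamma_\letale(U,-)$ for all $U$. So, following the same pattern, we reduce to showing that for $\ul{S}$ strictly local and $U\in S_\letale$,
\[
R\Gamma_\letale(U,j_\sharp j^*\cF)\to R\Gamma_\letale(U,\cF)\to R\Gamma_\letale(U\times_SZ,\cF|_{Z})
\]
is a fiber sequence. Both sides commute with colimits in $\cF$ by Proposition~\ref{dim.6} (finite cohomological dimension from Proposition~\ref{dim.1}), and the same holds on $Z$. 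Hence we reduce to $\cF$ a sheaf in degree $0$.

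For a sheaf, the sequence $0\to j_!j^*\cF\to\cF\to i_*i^*\cF\to 0$ is exact on $S_\letale$ by the classical localization statement for log étale sheaves (\cite[2.8.3]{Nak1} in the Kummer case, and Nakayama's log étale analogue). Combined with $R\Gamma_\letale(U,i_*\mathcal{G})\simeq R\Gamma_\letale(U\times_SZ,\mathcal{G})$, which follows from the exactness of $i_*$ on sheaves, this yields the claim.

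The Kummer étale case is identical, using $S_\ket$ throughout.
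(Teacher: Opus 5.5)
Your argument is correct and is essentially the paper's proof. Both construct a natural null composite (you via $j^*i_*\simeq 0$, the paper via the adjoint form $i^*j_\sharp\simeq 0$), reduce by colimit preservation to $\cF$ a sheaf in degree $0$, and conclude from Nakayama's sheaf-level localization sequence \cite[5.2]{Nak2}, \cite[2.8.3]{Nak1}; that citation also supplies the exactness of $i_*$ on sheaves that you invoke (compare Lemma~\ref{logv.8}).

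The only difference is packaging. Your detour through strict henselizations, Proposition~\ref{dim.1} and Proposition~\ref{dim.6} re-proves what the paper simply quotes as Proposition~\ref{dim.3}, namely that $i_*$ preserves colimits. The unfinished stalkwise attempt in your Step 2 to show $i'^*i'_*\simeq\id$ is not needed and can be dropped.
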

\begin{proof}
We focus on the log \'etale case since the proofs are similar.

The diagram
\[
\begin{tikzcd}
&
j_\sharp j^* \ar[d]\ar[r]&
\id\ar[d]
\\
0\ar[r,"\simeq"]&
i_*i^*j_\sharp j^*\ar[r]&
i_*i^*
\end{tikzcd}
\]
yields a null sequence $j_\sharp j^*\to \id \to i_*i^*$.
By Proposition \ref{dim.3},
it suffices to show that $j_\sharp j^*\cF\to \cF \to i_*i^*\cF$ is a fiber sequence for every $\cF\in \rD_\letale(S,\Lambda)$ concentrated in degree $0$.
This is proven in \cite[5.2]{Nak2} (\cite[2.8.3]{Nak1} in the Kummer \'etale case).
\end{proof}

\begin{thm}[Kato--Nakayama]
\label{dim.4}
Let
\[
\begin{tikzcd}
X'\ar[d,"f'"']\ar[r,"g'"]&
X\ar[d,"f"]
\\
S'\ar[r,"g"]&
S
\end{tikzcd}
\]
be a cartesian square of noetherian fs log schemes.
Assume one of the following conditions:
\begin{enumerate}
\item[\textup{(1)}]
$f$ is proper, and $\Lambda$ is a torsion commutative ring.
\item[\textup{(2)}]
$f$ is of finite type,
$g$ is log smooth, and $\Lambda$ is a torsion commutative ring killed by an integer invertible on $S$.
\end{enumerate}
Then
\[
g^*f_*
\simeq
f_*'g'^*
\colon
\rD_\letale(X,\Lambda)
\to
\rD_\letale(S',\Lambda).
\]
If we further assume that either $f$ or $g$ is exact,
then
\[
g^*f_*
\simeq
f_*'g'^*
\colon
\rD_\ket(X,\Lambda)
\to
\rD_\ket(S',\Lambda).
\]
\end{thm}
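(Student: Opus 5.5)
The plan is to reduce to the bounded-below case, which is Kato--Nakayama's theorem (\cite[Theorem 5.1]{Nak1} for Kummer \'etale, \cite[Theorem 12.7]{Nak2} with \cite[Remark 12.7.1]{Nak2} for log \'etale). The reduction runs through colimit-preservation of both sides, exactly as in the proofs of Propositions \ref{dim.5} and \ref{dim.7}.

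First I construct the base change transformation $g^*f_*\to f'_*g'^*$ in the usual way, as the mate of $g'^*f^*\simeq f'^*g^*$ via the unit and counit. To show it is an isomorphism, it suffices to check after applying $h^*$ for every strict henselization $h\colon S''\to S'$ at a geometric point. By Proposition \ref{dim.2}(2), applied to the cartesian square obtained by base changing $f'$ along $h$, we have $h^*f'_*\simeq f''_*h'^*$. The same strict-henselization base change also allows us to localize on $S$. In case (2), however, we keep the log smooth morphism $g$ as it is rather than replacing it.

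Next I show that both $g^*f_*$ and $f'_*g'^*$ preserve colimits. The pullbacks $g^*$ and $g'^*$ are left adjoints. The morphisms $f$ and $f'$ are of finite type between noetherian fs log schemes: in case (1) properness gives finite type, and in case (2) this is assumed. Proposition \ref{dim.3} then shows that $f_*$ and $f'_*$ preserve colimits. The noetherian hypothesis on $S'$ is needed here and is part of the statement. Every object of $\rD_\letale(X,\Lambda)$ (resp.\ $\rD_\ket(X,\Lambda)$) is generated under colimits and shifts by sheaves concentrated in degree $0$; for instance, write $\cF$ as the colimit of its truncations $\tau^{\ge -n}\cF$, each of which is an iterated extension/limit of shifted sheaves. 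More cleanly, every complex is a filtered colimit of bounded-below complexes. Hence it suffices to treat $\cF\in\rD^+$, and there the assertion is precisely the Kato--Nakayama theorem. In the Kummer \'etale case one requires that $f$ or $g$ be exact, which is the hypothesis used in \cite[Theorem 12.7]{Nak2}, \cite[Theorem 5.1]{Nak1}.

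The main obstacle is making the reduction to $\rD^+$ honest. Truncation functors need not commute with $f_*$ in the unbounded setting, and the transformation must be shown to be compatible with the colimit presentation. Using colimit-preservation of both functors avoids convergence issues, because $\cF\simeq\colim_n\tau^{\ge -n}\cF$ in a left-complete-free manner: this colimit always holds in the derived category of a Grothendieck abelian category. One must also make sure that the bounded-below results cited are stated for the log \'etale topos with $\Lambda$-coefficients killed by an integer invertible on $S$ in case (2). If they are stated only for torsion abelian sheaves, the forgetful functor to abelian groups commutes with $f_*$ and $g^*$ and is conservative, which handles the transfer.
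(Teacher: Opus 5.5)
Your proposal is correct and is essentially the paper's proof. Both sides preserve colimits by Proposition \ref{dim.3}, every object is a colimit of bounded-below ones, and the bounded-below case is Kato--Nakayama's theorem. Two small corrections: the strict-henselization step is unnecessary, and for the log \'etale topology the bounded-below inputs are \cite[Theorems 6.1, 12.1]{Nak2}, whereas \cite[Theorem 12.7, Remark 12.7.1]{Nak2} together with \cite[Theorem 5.1]{Nak1} cover the Kummer \'etale case.
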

\begin{proof}
For the log \'etale case,
Kato and Nakayama \cite[Theorems 6.1, 12.1]{Nak2} proved this when restricted to $\rD_\letale^+(X,\Lambda)$.
Since $\rD_\letale(X,\Lambda)$ is generated under colimits and shifts by $\rD_\letale^+(X,\Lambda)$,
Proposition \ref{dim.3} finishes the proof.

For the Kummer \'etale case,
use \cite[Theorem 5.1]{Nak1} and \cite[Theorem 12.7, Remark 12.7.1]{Nak2} instead.
\end{proof}

\section{Log \'etale motives}
\label{motive}

For a qcqs fs log scheme $S$ and a commutative ring $\Lambda$,
let $\lSm_S$ be the category of qcqs fs log schemes that are log smooth over $S$.
Let
\(
\DA_\letale^\eff(S,\Lambda)
\)
be the full subcategory of $\rD(\Sh_\letale(\lSm_S,\Lambda))$ spanned by the objects that are $\A^1$-invariant and invariant under verticalization.
Here,
an object $\cF$ of the derived $\infty$-category $\rD(\Sh_\letale(\lSm_S,\Lambda)$ is said to be \emph{invariant under verticalization} if $\cF(X)\xrightarrow{\simeq} \cF(X-\partial_S X)$ for every $X\in \lSm_S$,
where $X-\partial_S X$ is the largest open subscheme of $X$ that is vertical over $S$.
Let $\DA_\letale(S,\Lambda)$ be the $\P^1$-stabilization of $\DA_\letale^\eff(S,\Lambda)$.
For every $X\in \lSm_S$,
let $\Lambda_X$ denote the objects of $\DA_\letale^\eff(S,\Lambda)$ and $\DA_\letale(S,\Lambda)$ representable by $X$.
Let $\DA_\letale^\ex(S,\Lambda)$ be the full subcategory of $\DA_\letale(S,\Lambda)$ generated under colimits, shifts, and Tate twists by $\Sigma^\infty X_+$ for all exact $X\in \lSm_S$.

We have the localization functor
\[
L_\mot
\colon
\rD(\Sh_\letale(\lSm_S,\Lambda))
\to
\DA_\letale^\eff(S,\Lambda)
\]
with a fully faithful right adjoint $\iota_\mot$.
For $X\in \lSm_S$,
the canonical functor $\rho_X\colon X_\letale\to \lSm_S$ induces the restriction functor
\[
\rho_X^*\colon 
\rD(\Sh_\letale(\lSm_S,\Lambda))
\to
\rD_\letale(X,\Lambda)
\]
with a left adjoint $\rho_{X\sharp}$.

\begin{prop}
\label{DA.2}
Let $S$ be an fs log scheme,
and let $\Lambda$ be a commutative ring.
Then the functor
\[
\rho_{S\sharp}\colon \rD_\letale(S,\Lambda)\to \rD(\Sh_\letale(\lSm_S,\Lambda))
\]
is fully faithful.
\end{prop}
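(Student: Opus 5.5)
We will show that the unit $\id\to\rho_S^*\rho_{S\sharp}$ is an isomorphism on $\rD_\letale(S,\Lambda)$. The guiding observation is that $\rho_S\colon S_\letale\to\lSm_S$ is the inclusion of a full subcategory. It is continuous and cocontinuous for the log \'etale topologies, because a log \'etale covering of an object of $S_\letale$ in $\lSm_S$ consists of log \'etale morphisms, and so already lies in $S_\letale$. Under these conditions the restriction $\rho_S^*$ is computed by evaluating a sheaf on the objects of $S_\letale$. It commutes with sheafification, and it also has a right adjoint. Consequently $\rho_{S\sharp}$ is the left Kan extension followed by sheafification.

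The main step is to show that $\rho_S^*\rho_{S\sharp}\simeq\id$ already at the level of presheaves, and then that sheafification does not disturb this. First I treat representables. For $U\in S_\letale$, the functor $\rho_{S\sharp}$ sends $\Lambda_U$ to the object of $\rD(\Sh_\letale(\lSm_S,\Lambda))$ represented by $U$. Restricting this to $S_\letale$ gives back $\Lambda_U$, because $\rho_S$ is fully faithful, so $\Hom_{\lSm_S}(V,U)=\Hom_{S_\letale}(V,U)$ for $V\in S_\letale$. Next I extend to all objects. Both $\rho_{S\sharp}$ and $\rho_S^*$ preserve colimits: $\rho_S^*$ is a left adjoint, since it has a right adjoint by cocontinuity. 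The objects $\Lambda_U[n]$ generate $\rD_\letale(S,\Lambda)$ under colimits. Hence the unit is an equivalence on all of $\rD_\letale(S,\Lambda)$.

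The point needing the most care is the justification that $\rho_S^*$ commutes with sheafification, or equivalently that it preserves colimits of hypersheaves. I would argue this concretely. Every log \'etale hypercovering in $\lSm_S$ of an object $V\in S_\letale$ has all its terms in $S_\letale$, since log \'etale morphisms are stable under composition and fiber products. Therefore the local equivalences on $\lSm_S$ restrict to local equivalences on $S_\letale$, and the restricted derived categories match. If one prefers, it suffices to check that $\rho_S^*$ preserves hypercomplete sheaves and detects the log \'etale hypersheaf condition on $S_\letale$, which follows from the same observation. No noetherian hypothesis is needed, in agreement with the statement.
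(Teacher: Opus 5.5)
Your proof is correct, but it takes a different route from the paper's.

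\textbf{The paper's route.} It works at the abelian level. It cites Stacks Tag 00XU to get three facts about log \'etale sheaves of $\Lambda$-modules: $\rho_{S\sharp}$ is fully faithful, $\rho_{S\sharp}$ is \emph{exact}, and $\rho_S^*$ is exact. Both derived functors are then computed termwise on complexes, so the unit $\id\to\rho_S^*\rho_{S\sharp}$ is a termwise isomorphism. Full faithfulness on $\rD_\letale(S,\Lambda)$ follows immediately.

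\textbf{Your route.} You argue directly in the derived $\infty$-category. You check the unit on the representables $\Lambda_U$, using only two facts: $S_\letale\subset\lSm_S$ is a full subcategory, and restriction commutes with sheafification. You then propagate to all objects, since $\id$ and $\rho_S^*\rho_{S\sharp}$ preserve colimits and the $\Lambda_U[n]$ generate.

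\textbf{What each buys.} Your argument never needs exactness of $\rho_{S\sharp}$ on abelian sheaves. That is the least formal input behind the paper's citation, and it relies on $S_\letale$ being closed under fiber products in $\lSm_S$. In exchange, you must justify that the derived restriction is evaluation on $S_\letale$ and preserves colimits, i.e.\ that restriction commutes with hypersheafification. This is essentially the content of Proposition~\ref{dim.8}.

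In the paper this step comes for free: $\rho_S^*$ has an exact left adjoint, so it preserves K-injectives, and hence it computes the same cohomology on objects of $S_\letale$.

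\textbf{Your last paragraph.} Your hypercover justification is only a sketch, but it is sound. The hypercovers of $V\in S_\letale$ are the same in both sites. One inclusion is your cocontinuity observation. The other, which is the direction actually needed for restriction to preserve hypersheaves, uses continuity and the fact that fiber products agree.

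The paper's route also shows that $\rho_{S\sharp}$ is t-exact, which your argument does not directly give.
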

\begin{proof}
By \cite[\href{https://stacks.math.columbia.edu/tag/00XU}{Tag 00XU}]{stacks-project},
the functor $\rho_{S\sharp}\colon \Sh_\letale(S_\letale,\Lambda)\to \Sh_\letale(\lSm_S,\Lambda)$ is fully faithful and exact,
and its right adjoint $\rho_S^*\colon \Sh_\letale(\lSm_S,\Lambda)\to \Sh_\letale(S_\letale,\Lambda)$ is exact.
Hence the derived functor $\rho_{S\sharp}\colon \rD_\letale(S,\Lambda)\to \rD(\Sh_\letale(\lSm_S,\Lambda))$ is fully faithful.
\end{proof}

The composite functor
\begin{equation}
\label{DA.0.1}
\rho_{!}
\colon
\rD_\letale(S,\Lambda)
\xrightarrow{\rho_{S\sharp}}
\rD(\Sh_\letale(\lSm_S,\Lambda))
\xrightarrow{L_\mot}
\DA_\letale^\eff(S,\Lambda)
\xrightarrow{\Sigma^\infty}
\DA_\letale(S,\Lambda)
\end{equation}
will be used for our main result about log \'etale rigidity in \S \ref{rig}.

For the underlying scheme $\ul{S}$,
by \cite[Proposition 2.31]{logA1}\footnote{The reference considers only log \'etale sheaves, but the same proof works for log \'etale hypersheaves.},
we have equivalences of $\infty$-categories
\[
\DA_\letale^\eff(\ul{S},\Lambda)
\simeq
\DA_\et^\eff(\ul{S},\Lambda),
\;
\DA_\letale(\ul{S},\Lambda)
\simeq
\DA_\et(\ul{S},\Lambda).
\]

In the remaining part of this section,
we discuss the colimit preserving property for some functors without assuming finite log \'etale cohomological dimension.

\begin{prop}
\label{dim.8}
Let $S$ be an fs log scheme.
Then the restriction functor
\[
\rho_X^*\colon
\rD(\Sh_\letale(\lSm_S,\Lambda))
\to
\rD_\letale(X,\Lambda)
\]
preserves colimits for every $X\in \lSm_S$.
\end{prop}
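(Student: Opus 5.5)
The plan is to show that $\rho_X^*$ has a right adjoint which can be described explicitly, or, more directly, that $\rho_X^*$ commutes with colimits already at the level of sheaves and that this passes to derived categories. The key observation is that $\rho_X^*$ is restriction along the functor $\rho_X\colon X_\letale\to \lSm_S$, $U\mapsto U$, which is well defined because a log \'etale $U\to X$ composed with the log smooth $X\to S$ is log smooth. This functor is continuous, and its coverings are compatible: a family in $X_\letale$ is a log \'etale covering if and only if it is one in $\lSm_S$. Moreover, for a sheaf $\cF$ on $\lSm_S$, every log \'etale covering of an object $U\in X_\letale$ in the big site $\lSm_S$ can be refined by one whose members lie in $X_\letale$; in fact every member is itself log \'etale over $U$, hence over $X$. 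Consequently restriction of presheaves commutes with sheafification, that is, $\rho_X^* a = a\,\rho_X^*$ on presheaves.

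I will then argue in the following order. First, at the level of presheaves of complexes, restriction along $\rho_X$ obviously preserves all colimits and quasi-isomorphisms, since it is computed objectwise. Second, by the compatibility of sheafification with restriction noted above, $\rho_X^*$ carries local quasi-isomorphisms (maps inducing isomorphisms on sheafified homology) to local quasi-isomorphisms. It therefore descends to the derived $\infty$-categories of sheaves, where $\rD(\Sh_\letale(\lSm_S,\Lambda))$ is viewed as a localization of presheaves of complexes. Third, colimits in $\rD(\Sh_\letale(\lSm_S,\Lambda))$ are computed by taking colimits of presheaves and then localizing, and likewise for $\rD_\letale(X,\Lambda)$. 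Since $\rho_X^*$ commutes with both steps, it preserves colimits.

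The one subtle point is the hypersheaf convention. $\rD_\letale(X,\Lambda)$ is the derived category of the abelian category of sheaves, which is hypercomplete, and the localization is at sheaf-homology isomorphisms. Since $\rho_X^*$ is exact on abelian sheaf categories (it is restriction, commuting with sheafification) and commutes with direct sums, I expect the cleanest formal route to be the following. An exact functor between Grothendieck abelian categories that preserves direct sums induces a functor on derived $\infty$-categories preserving all colimits: it preserves cofiber sequences, and it preserves direct sums because these are computed termwise in $\rD$ of a Grothendieck abelian category. A stable exact functor preserving coproducts preserves all colimits.

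The main obstacle is the claim that sheafification commutes with restriction. This requires checking that any covering sieve of $U$ in $\lSm_S$ is generated by morphisms from objects of $X_\letale$. That holds because the coverings in $\lSm_S$ are families of log \'etale maps $U_i\to U$, and such $U_i$ are log \'etale over $X$ and qcqs, hence objects of $X_\letale$. Once this is checked, the rest is formal.
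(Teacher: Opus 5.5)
Your proposal is correct and follows essentially the same route as the paper. Colimits in $\rD(\Sh_\letale(\lSm_S,\Lambda))$ and in $\rD_\letale(X,\Lambda)$ are both hypersheafifications of objectwise colimits, and the hypersheafification evaluated at $U\in X_\letale$ depends only on values on $U_\letale$, since every log \'etale covering of $U$ in $\lSm_S$ already lies in $U_\letale$. Your alternative packaging, via an exact, direct-sum-preserving functor between Grothendieck abelian categories, is also valid and simply spells out the paper's short argument in more detail.
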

\begin{proof}
Let $\cF:=\colim_{i\in I} \cF_i$ be a colimit in $\rD(\Sh_\letale(\lSm_S,\Lambda))$.
For every $U\in X_\letale$,
$\cF(U)$ is the evaluation at $U$ of the hypersheafification of
\[
U'\in U_{\letale}\mapsto \colim_{i\in I} (\cF_i(U')).
\]
Consider $\cG:=\colim_{i\in I} \cG_i$ in $\rD_\letale(X,\Lambda)$ with $\cG_i:=\rho_X^* \cF_i$.
As above,
$\cG(U)$ is the evaluation at $U$ of the hypersheafification of
\[
U'\in U_\letale \mapsto \colim_{i\in I} (\cG_i(U')).
\]
Since $\cF_i(U')\simeq \cG_i(U')$ for every $U'\in U_\letale$,
we conclude.
\end{proof}

\begin{prop}
\label{dim.9}
Let $S$ be a noetherian fs log scheme.
Then the inclusion
\[
\iota_{\mot} \colon \DA_\letale^\eff(S,\Lambda)
\to
\rD(\Sh_\letale(\lSm_S,\Lambda))
\]
preserves colimits.
\end{prop}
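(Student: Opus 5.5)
The plan is to show that the full subcategory $\DA_\letale^\eff(S,\Lambda)\subset \rD(\Sh_\letale(\lSm_S,\Lambda))$ is closed under colimits. Then $\iota_\mot$ preserves colimits: colimits in the localization are computed as $L_\mot$ of the ambient colimit, and $L_\mot$ is the identity on local objects. Since the subcategory is stable and the ambient category is stable, closure under finite colimits is automatic: both $\A^1$-invariance and invariance under verticalization are expressed as certain maps $\cF(X)\to \cF(Y)$ being equivalences, and these conditions are stable under finite limits and hence finite colimits. So the task reduces to closure under filtered colimits, or equivalently under arbitrary direct sums.

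Let $\cF=\colim_i \cF_i$ be a filtered colimit computed in $\rD(\Sh_\letale(\lSm_S,\Lambda))$, with each $\cF_i$ $\A^1$-invariant and invariant under verticalization. For $X\in\lSm_S$, Proposition \ref{dim.8} gives $\rho_X^*\cF\simeq \colim_i \rho_X^*\cF_i$ in $\rD_\letale(X,\Lambda)$, and hence
\[
\cF(X)\simeq R\Gamma_\letale(X,\colim_i\rho_X^*\cF_i).
\]
If $R\Gamma_\letale(X,-)$ commuted with filtered colimits, we would get $\cF(X)\simeq\colim_i\cF_i(X)$. The two invariance conditions would then pass to the colimit, since a filtered colimit of equivalences $\cF_i(X\times\A^1)\leftarrow\cF_i(X)$ and $\cF_i(X)\to\cF_i(X-\partial_SX)$ is again an equivalence.

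The main obstacle is that $X$ need not have finite log étale cohomological dimension: $S$ is only noetherian, and $\Lambda$ need not be torsion, so Proposition \ref{dim.6} does not apply directly. To get around this, I would rewrite the two invariance conditions locally. $\A^1$-invariance of $\cF$ is equivalent to $p^*$-type unit maps $\rho_X^*\cF\to p_*\rho_{X\times\A^1}^*\cF$ being equivalences in $\rD_\letale(X,\Lambda)$, where $p\colon X\times\A^1\to X$; this uses that the condition must hold for all log étale $U\to X$ simultaneously. Verticalization invariance similarly becomes $\rho_X^*\cF\simeq j_*\rho_{X-\partial_SX}^*\cF$ for $j\colon X-\partial_S X\to X$, using that $U-\partial_SU=U\times_X(X-\partial_SX)$ for $U$ log étale over $X$ (at least after passing to the open where $U\to S$ is vertical; I would verify this compatibility of verticalization with log étale base change first). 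The statement is then reduced to showing that $p_*$ and $j_*$ commute with filtered colimits in $\rD_\letale$, since $\rho^*$ already does by Proposition \ref{dim.8}.

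For that last step, $p$ and $j$ are of finite type between noetherian fs log schemes, so I would argue as in Proposition \ref{dim.3}. Stalks are computed after strict henselization as in Proposition \ref{dim.2}, and over a strictly local base finite cohomological dimension holds by Proposition \ref{dim.1}. If $\Lambda$ is not torsion, Proposition \ref{dim.1} does not directly apply. In that case I would either restrict to the torsion situation used later in the paper, or check whether the hypersheaf setting with a noetherian base still yields compactness of the representables $\Lambda_U$ via the coherence in Proposition \ref{dim.11}. I expect this step to be the delicate one.
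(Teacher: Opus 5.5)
Your route is the paper's: rewrite both invariance conditions as equivalences $\rho_X^*\cF\simeq p_*\rho_{X\times\A^1}^*\cF$ and $\rho_X^*\cF\simeq j_*\rho_V^*\cF$ in $\rD_\letale(X,\Lambda)$, where $V:=X-\partial_SX$. Then commute colimits past $\rho^*$ (Proposition \ref{dim.8}) and past $p_*$, $j_*$ (Proposition \ref{dim.3}). The paper likewise just invokes Proposition \ref{dim.3}, so torsion $\Lambda$ is implicitly assumed there and is not the delicate point. The $\A^1$ half is fine.

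\textbf{The gap.} The genuine gap is the compatibility you postponed. For log \'etale $U\to X$ one only has $U\times_XV\subseteq U-\partial_SU$, and equality fails for non-Kummer maps. Consequently invariance under verticalization does \emph{not} imply $\rho_X^*\cF\simeq j_*\rho_V^*\cF$.

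\textbf{A counterexample.}
\begin{enumerate}
\item Work over a field $k$ with $\ell$ invertible, and take $\Lambda=\Z/\ell$. Let $S=\A_\N=\Spec k[t]$ and $X=\A_{\N^2}=\Spec k[x,y]$ with $t\mapsto x$. Then $V=\{y\neq 0\}$.
\item Let $U=\Spec k[a,b]$ with log structure $\N^2$, mapping to $X$ by $x\mapsto ab$, $y\mapsto ab^2$. This is an affine chart of a log blow-up of $X$: the monoid map $x\mapsto a+b$, $y\mapsto a+2b$ is an isomorphism on groups. Hence $U\to X$ is log \'etale.
\item In $\ol{\cM}_U$ one has $t\mapsto a+b$, so $U$ is vertical over $S$ everywhere. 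Yet $U\times_XV=\{ab\neq 0\}$.
\item Let $i\colon \pt_\N\to S$ be the origin. Take $\cG'\in\DA^\eff_\letale(\pt_\N,\Lambda)$ with $\cG'(\pt_\N)\neq 0$, e.g.\ $\rho_{\pt_\N\sharp}\Lambda$ by Lemma \ref{rig.7}. Set $\cG(Y):=\cG'(Y\times_S\pt_\N)$.
\item Then $\cG$ lies in $\DA^\eff_\letale(S,\Lambda)$; invariance under verticalization follows from \eqref{ver.0.1}.
\item On one side, $\cG(U\times_XV)=\cG'(\emptyset)=0$, since $t=ab$ is a unit there.
\item On the other side, $\cG(U)=\cG'(U\times_S\pt_\N)$ contains $\cG'(\pt_\N)\neq 0$ as a retract, via the section $a=0$, $b=1$.
\end{enumerate}
So $\rho_X^*\cG\to j_*\rho_V^*\cG$ is not an equivalence in $\rD_\letale(X,\Lambda)$.

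The paper's \eqref{dim.9.1} asserts the same identification for all $U'\in U_\letale$. The result it cites, as recorded in \eqref{ver.0.1}, concerns base change along $S'\to S$, not log \'etale maps $U'\to U$ over a fixed $S$. So the paper's argument has the same hole at this step.

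\textbf{A repair.} Pass to a site where the compatibility does hold.
\begin{enumerate}
\item If $U\to X$ is Kummer \'etale (in particular strict \'etale), then $U-\partial_SU=U\times_XV$. The reason is that a Kummer map $\ol{\cM}_{X,x}\to\ol{\cM}_{U,u}$ both preserves and reflects the condition that the quotient by $\ol{\cM}_{S,s}$ is a group.
\item Write $\kappa_*\colon\rD_\letale(X,\Lambda)\to\rD_\ket(X,\Lambda)$. For $\cF=\colim_i\cF_i$, invariance under verticalization gives $\kappa_*\rho_X^*\cF_i\simeq j_*\kappa_*\rho_V^*\cF_i$ in $\rD_\ket(X,\Lambda)$.
\item Moreover, $\cF(X)$ and $\cF(V)$ are the global sections of $\kappa_*\rho_X^*\cF$ and $j_*\kappa_*\rho_V^*\cF$ respectively.
\item The extra input is that $\kappa_*$ preserves colimits. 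This follows as in the proof of Proposition \ref{dim.5}. There $\kappa_*$ is shown to commute with pullback to strict henselizations. Over a strict henselization, Propositions \ref{dim.1} and \ref{dim.6} show that $R\Gamma_\letale(U,-)=R\Gamma_\ket(U,\kappa_*-)$ and $R\Gamma_\ket(U,-)$ preserve colimits for every Kummer \'etale $U$ over it.
\item Combined with Proposition \ref{dim.3} for the Kummer \'etale $j_*$, this yields $\cF(X)\simeq\cF(V)$ for the colimit.
\end{enumerate}
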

\begin{proof}
Let $\cF:=\colim_{i\in I} \cF_i$ be a colimit in $\DA_\letale^\eff(S,\Lambda)$.
Consider $\cG:=\colim_{i\in I} \cG_i$ in $\rD(\Sh_\letale(\lSm_S,\Lambda))$ with $\cG_i:=\iota_{\mot} \cF_i$.
We only need to show that $\cG$ is $\A^1$-invariant and invariant under verticalization since this implies $G\simeq \iota_\mot \cF$.

Let $U\in \lSm_S$,
and let $j\colon V:=U-\partial_S U\to U$ be the obvious open immersion.
Then we have
\begin{equation}
\label{dim.9.1}
\rho_U^*\cG_i
\simeq
j_*\rho_{V}^* \cG_i
\end{equation}
since
\[
(\rho_U^*\cG_i)(U')\simeq \cG_i(U')\simeq \cG_i(U'-\partial_S U')\simeq (j_*\rho_V^* \cG_i)(U')
\]
for every $U'\in U_\letale$,
where the second isomorphism holds since $\cG_i$ is invariant under verticalization,
and the third isomorphism is due to  \cite[Proposition 2.17]{logA1}          .
We also have
\begin{align*}
&\cG(U)\simeq (\rho_U^*\cG)(U)
\simeq (\colim_{i\in I} \rho_{U}^*\cG_i)(U)
\simeq (\colim_{i\in I} j_*\rho_{V}^*\cG_i)(U)
\\
\simeq &(j_*\colim_{i\in I}  \rho_{V}^* \cG_i)(U)
\simeq (\colim_{i\in I} \rho_{V}^*\cG_i)(V)
\simeq (\rho_{V}^*\colim_{i\in I} \cG_i)(V)\simeq \cG(V),
\end{align*}
where the second and sixth isomorphisms are due to Proposition \ref{dim.8},
the third isomorphism is due to \eqref{dim.9.1},
and the fourth isomorphism is due to Proposition \ref{dim.3}.
This shows that $\cG$ is invariant under verticalization.

The proof that $\cG$ is $\A^1$-invariant is similar.
\end{proof}

\begin{prop}
\label{dim.10}
Let $f\colon X\to S$ be a morphism of noetherian fs log schemes.
If $f$ is of finite type,
then $f_*\colon \DA_\letale^\eff(X,\Lambda)\to \DA_\letale^\eff(S,\Lambda)$ preserves colimits.
\end{prop}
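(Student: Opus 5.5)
Since both $\iota_\mot$ on $X$ and on $S$ preserve colimits by Proposition \ref{dim.9}, and $\iota_\mot$ is fully faithful (hence conservative and detecting colimits in the essential image), it suffices to show that the composite
\[
\iota_\mot f_*\simeq f_*\iota_\mot\colon \DA_\letale^\eff(X,\Lambda)\to \rD(\Sh_\letale(\lSm_S,\Lambda))
\]
preserves colimits. Here the identification uses that the pushforward $f_*$ on $\rD(\Sh_\letale(\lSm_X,\Lambda))$, given by $(f_*\cF)(U)=\cF(U\times_S X)$ for $U\in\lSm_S$, preserves $\A^1$-invariant objects and objects invariant under verticalization. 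For the latter, note that $(U\times_S X)-\partial_X(U\times_S X)=(U-\partial_S U)\times_S X$; I would check this via the charts description of the vertical locus, and it is presumably one of the compatibilities in \cite{logA1}. Combining with Proposition \ref{dim.9} on $X$, we reduce to showing that
\[
f_*\colon \rD(\Sh_\letale(\lSm_X,\Lambda))\to \rD(\Sh_\letale(\lSm_S,\Lambda))
\]
preserves colimits.

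Colimits in $\rD(\Sh_\letale(\lSm_S,\Lambda))$ are detected after applying all $\rho_U^*$ for $U\in\lSm_S$, since these jointly detect equivalences and each preserves colimits by Proposition \ref{dim.8}. So fix $U\in\lSm_S$, put $U_X:=U\times_S X$, and let $f_U\colon U_X\to U$ be the projection. Restricting to small sites gives
\[
\rho_U^* f_*\simeq f_{U*}\rho_{U_X}^*,
\]
where $f_{U*}$ is the pushforward $\rD_\letale(U_X,\Lambda)\to\rD_\letale(U,\Lambda)$. This holds because for $U'\in U_\letale$ both sides evaluate to $\cF(U'\times_S X)$, and $U'\times_S X$ lies in $(U_X)_\letale$. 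Now $\rho_{U_X}^*$ preserves colimits by Proposition \ref{dim.8}, and $f_{U*}$ preserves colimits by Proposition \ref{dim.3}. Proposition \ref{dim.3} applies because $U$ is noetherian: it is of finite type over the noetherian $S$ if we take $\lSm_S$ to consist of finite type objects. If it does not, I would instead work Zariski locally on $U$ or reduce to qcqs $U$, which are limits of finite type ones. Hence $\rho_U^*f_*$ preserves colimits for every $U$, and so $f_*$ does.

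The main obstacle is the compatibility of $f_*$ with verticalization invariance, namely the identification of the vertical locus of $U\times_S X$ over $X$ with $(U-\partial_SU)\times_S X$. If this direct identification is awkward, I would avoid it: $f_*$ on $\DA^\eff$ is right adjoint to $f^*=L_\mot f^*$. Its underlying pushforward is then $\iota_\mot f_* = f_*\iota_\mot$ as soon as $f_*\iota_\mot$ lands in the local objects. That landing property follows formally from $f^*$ preserving the generating local equivalences, namely $\A^1$-projections and verticalization maps $U-\partial_SU\to U$, under base change. This is the same geometric fact, which I expect to be recorded in \cite{logA1}.
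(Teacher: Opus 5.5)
Your proposal is correct and follows essentially the same route as the paper: detect colimits with the jointly conservative, colimit-preserving family $\rho_U^*\iota_\mot$ (Propositions \ref{dim.8} and \ref{dim.9}), rewrite $\rho_U^*f_*\simeq f_{U*}\rho_{U\times_S X}^*$, and conclude by Propositions \ref{dim.3} and \ref{dim.8}. The only difference is that you spell out the identification $\iota_\mot f_*\simeq f_*\iota_\mot$, which the paper uses tacitly; the verticalization compatibility you flag as the main obstacle is exactly \eqref{ver.0.1}, i.e.\ \cite[Proposition 2.17]{logA1}, so it is already available.
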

\begin{proof}
The family of functors
\[
\{\rho_U^*:=\rho_U^*\iota_\mot\colon \DA_\letale^\eff(S,\Lambda)\to \rD_\letale(U,\Lambda)\}_{U\in \lSm_S}
\]
is conservative.
Hence by Propositions \ref{dim.8} and \ref{dim.9},
it suffices to show that $\rho_U^*f_*$ preserves colimits for every $U\in \lSm_S$.
Since $\rho_U^* f_*\simeq f_*\rho_{U\times_S X}^*$,
we conclude by Propositions \ref{dim.3} and \ref{dim.8}.
\end{proof}

\section{Poincar\'e duality for log \'etale morphisms}
\label{4}

In this section,
we prove Poincar\'e duality for log \'etale morphisms in Proposition \ref{logv.4}.
Nakayama \cite[Proposition in 7.1]{Nak1} proved this under the assumption that $\Lambda$ is noetherian.

We begin with a factorization result on separated log \'etale morphisms.
We refer to \cite[Definition II.2.3.1(2)]{Ogu} for the notion of neat charts.

\begin{prop}
\label{logv.10}
Let $f\colon X\to S$ be a separated log \'etale (resp.\ Kummer \'etale) morphism of qcqs fs log schemes.
Assume that $S$ has a chart $P$ neat at a point of $S$.
Then log \'etale locally on $X$,
there exists a factorization
\[
X\xrightarrow{j} Y\xrightarrow{p} S
\]
such that $j$ is an open immersion and $p$ is proper (resp.\ finite Kummer).
\end{prop}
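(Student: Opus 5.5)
Since $f$ is log étale and $S$ has a chart $P$ neat at a point $s$, I would begin with Kato's structure theorem (Ogus IV.3.3.1). After replacing $X$ by a strict étale neighborhood, which counts as log étale locally on $X$, the chart $P$ extends to a chart $\theta\colon P\to Q$ of $f$. Here $\theta$ is injective, the torsion part of $\cok(\theta^\gp)$ has order invertible on $X$, and the induced morphism
\[
X\to S\times_{\A_P}\A_Q
\]
is strict étale. In the Kummer étale case $\theta$ can in addition be taken to be Kummer, so that $Q^\gp/P^\gp$ is finite and $\A_Q\to\A_P$ is finite. Neatness of $P$ at $s$ lets us shrink further so that $\theta$ is local at the relevant point of $X$. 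In particular $\theta^\gp$ becomes an isomorphism up to torsion, or at least $Q$ is controlled by $P$ through finitely many cones.

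Next I would handle the model $S\times_{\A_P}\A_Q\to S$ in each case. In the Kummer case this map is finite, so it is already proper and finite Kummer. In the log étale case, $\A_Q\to\A_P$ is an open part of a toric log modification. I would choose a proper subdivision $\Sigma$ of the dual cone of $P$ that contains the dual cone of $Q$ (after localizing, $\theta^\gp\otimes\Q$ is an isomorphism) as a face, or as a union of faces after refinement. This gives $\A_Q\subset\A_\Sigma\to\A_P$ with the first map an open immersion and the second proper log étale, with the finite torsion kernel absorbed by a Kummer cover that is also finite. Base changing along $S\to\A_P$ yields a proper $S\times_{\A_P}\A_\Sigma\to S$ together with an open immersion $S\times_{\A_P}\A_Q\to S\times_{\A_P}\A_\Sigma$.

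The remaining difficulty is the strict étale map $X\to T:=S\times_{\A_P}\A_Q$, or $\A_\Sigma$-version $T\subset \ol T$, which has to be compactified. Since $f$ is separated and $T\to S$ is separated, $X\to \ol T$ is separated, étale on underlying schemes, and quasi-finite. I would apply Zariski's Main Theorem to $\ul X\to\ul{\ol T}$. Since $\ol T$ is qcqs, working locally on $X$ means passing to an étale, hence strict étale, neighborhood of a point $x$. After such a replacement we can write $\ul X$ as open in a finite $\ul{\ol T}$-scheme $\ul Y$, and the strict log structure is pulled back from $\ol T$. Then $Y\to\ol T$ is finite strict, so $Y\to S$ is proper, and in the Kummer case it is finite Kummer because it is a composite of a finite strict map and a finite Kummer map. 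The last thing to check is that $X\to Y$ is an open immersion compatible with log structures, which holds since both log structures are pulled back from $\ol T$.

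I expect the main obstacle to be the toric step in the log étale case. One needs the neat chart to guarantee that, locally at the image of $x$, the homomorphism $\theta^\gp$ is an isomorphism rationally, so that $\A_Q\to\A_P$ really is an open in a proper toric modification rather than something with extra torus directions. Handling the invertible-order torsion cokernel cleanly is part of the same difficulty.
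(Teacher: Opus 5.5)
Your proof is correct, but it takes a different route from the paper's.

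\textbf{The paper's route.} The paper never uses a chart for $f$. It modifies the base by pulling back along the proper map $\T_{\Sigma'}\to\T_\Sigma\to\A_P$. Here $\T_\Sigma\to\A_P$ is a log blow-up that makes $f$ $\Q$-integral, hence Kummer (Ogus III.2.6.7). The map $\T_{\Sigma'}\to\T_\Sigma$ is multiplication by a suitable $n$, which makes the pullback saturated, hence strict (Ogus I.4.9.1). Zariski's main theorem is then applied to the strict \'etale map $X'\to S'$. The localization on $X$ is the non-strict log \'etale cover $X'=X\times_{\A_P}\T_{\Sigma'}\to X$. In the Kummer case only the multiplication-by-$n$ step on $\A_P$ is used.

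\textbf{Your route.} You keep $S$ fixed and use Kato's structure theorem to reduce to the toric model $S\times_{\A_P}\A_Q$. You then embed $\A_Q$ openly in the toric scheme of a subdivision of $P^\vee$ having $Q^\vee$ as a cone. To absorb $Q^\gp/P^\gp$ (a finite cokernel, not a kernel), you build this toric scheme with the lattice $Q^\gp$, so that it is finite over the one built with $P^\gp$. Finally you apply Zariski's main theorem to the strict \'etale map $X\to\ol{T}$.

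\textbf{Comparison.} Your route gives a stronger conclusion: the factorization exists \emph{strict} \'etale locally on $X$. It costs two inputs that the paper's base-change argument avoids.
\begin{enumerate}
\item[(a)] You need a subdivision of $P^\vee$ containing $Q^\vee$ as a cone. One always exists: refine $P^\vee$ by the rational hyperplanes cutting out $Q^\vee$. So your ``union of faces after refinement'' hedge is unnecessary.
\item[(b)] In the Kummer case you need a Kummer chart $P\to Q$ extending the given $P$, including at points of $X$ lying over points where $P$ is not neat. You can obtain it by replacing $Q$ with $\{q\in Q^\gp : nq\in P \text{ for some } n>0\}$; Kummerness of $f$ at $x$ is what makes this still a chart near $x$.
\end{enumerate}

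\textbf{A misattribution.} The step you flag as the main obstacle is automatic. $\theta^\gp\otimes\Q$ being an isomorphism does not come from neatness. It comes from the log \'etale form of Kato's criterion, which makes all of $\cok(\theta^\gp)$ finite, not merely its torsion part. Equivalently, it follows because $X\to S\times_{\A_P}\A_Q$ is \'etale and $f$ is log \'etale.

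Also, Zariski's main theorem can be applied globally to $X\to\ol{T}$, since that map is separated, quasi-finite, and finitely presented over a qcqs base. No further localization is needed at that step.
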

\begin{proof}
We first work in the log \'etale case.

For a fan $\Sigma$,
let $\T_\Sigma$ be the associated fs log scheme in \cite[\S 2.2]{logSH}.
By \cite[Theorem III.2.6.7]{Ogu},
there exists a log blow-up $\T_\Sigma\to \A_P$ such that the pullback $X\times_{\A_P} \T_\Sigma\to S\times_{\A_P} \T_\Sigma$ is $\Q$-integral and and hence Kummer.

Consider the morphism of fans $\Sigma':=\Sigma \to \Sigma$ induced by multiplying an integer $n>0$ on the lattice of $\Sigma$.
By \cite[Theorem I.4.9.1]{Ogu},
there exists $n$ such that the pullback
\[
f'\colon X':=X\times_{\A_P} \T_{\Sigma'}\to S':=S\times_{\A_P} \T_{\Sigma'}
\]
is saturated and hence strict.
Zariski's main theorem implies that $f'$ admits a factorization $X'\to Y'\to S'$ such that the first morphism is an open immersion and the second morphism is strict finite.
To conclude,
observe that $X'\to X$ is a log \'etale covering and $S'\to S$ is proper.

For the Kummer \'etale case, skip the part with $\T_\Sigma$, and note that the multiplication by $n$ on $P$ induces a finite Kummer endomorphism on $\A_P$.
\end{proof}

\begin{const}
\label{logv.5}
Let $f\colon X\to S$ be a separated log \'etale morphism of qcqs fs log schemes,
and let $\Lambda$ be a torsion commutative ring.
Assume that $f$ admits a factorization
\[
X\xrightarrow{j} Y \xrightarrow{g} S
\]
such that $j$ is an open immersion and $g$ is proper.
Then we have an induced commutative diagram with cartesian squares
\[
\begin{tikzcd}
X\ar[r,"a"]\ar[rr,bend left=25,"b"]&
X\times_S X\ar[d,"f''"']\ar[r,"j'"]&
X\times_S Y\ar[d,"f'"]\ar[r,"g'"]&
X\ar[d,"f"]
\\
&
X\ar[r,"j"]&
Y\ar[r,"g"]&
S,
\end{tikzcd}
\]
where $a$ is the diagonal morphism.
By \cite[Lemma A.11.2]{logDM}\footnote{In this reference, the statement is written for fs log schemes admitting charts Zariski locally, but the same proof is applicable without this assumption},
$a$ is an open immersion.
Hence $b$ is an open immersion too.
Since $f$ and $g$ are separated,
we deduce that $a$ and $b$ are strict closed immersions.
In particular, we have
\[
b_\sharp\simeq b_*\colon \rD_\letale(X,\Lambda)\to \rD_\letale(X\times_S Y,\Lambda).
\]
Using this,
we have a composite natural transformation
\[
f_\sharp
\xrightarrow{\simeq}
f_\sharp g_*'b_*
\to
g_*f_\sharp' b_*
\xrightarrow{\simeq}
g_*f_\sharp' b_\sharp
\xrightarrow{\simeq}
g_*j_\sharp
\colon
\rD_\letale(X,\Lambda)
\to
\rD_\letale(S,\Lambda).
\]
\end{const}

We need the following log \'etale analogue of \cite[\href{https://stacks.math.columbia.edu/tag/04DQ}{Tag 04DQ}]{stacks-project}.

\begin{lem}
\label{logv.8}
Let $f\colon X\to S$ be a strict finite morphism of qcqs fs log schemes.
Then $f_*\colon \Sh(X_\letale,\Lambda)\to \Sh(S_\letale,\Lambda)$ is exact for every commutative ring $\Lambda$.
\end{lem}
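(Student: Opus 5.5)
Following the proof of \cite[\href{https://stacks.math.columbia.edu/tag/04DQ}{Tag 04DQ}]{stacks-project}, I will check exactness on stalks at geometric points of $S$, using the log étale analogue of the description of stalks via strict henselizations. The functor $f_*$ is left exact because it is a right adjoint. So it suffices to show that $f_*$ preserves surjections, and I will test this on the points of the topos $\Sh(S_\letale)$. Concretely, it is enough to show the following: for every surjection $\cF\to\cG$ in $\Sh(X_\letale,\Lambda)$, every $U\in S_\letale$, and every section $s\in\cG(X\times_S U)$, there is a log étale covering $\{U_k\to U\}$ such that $s|_{X\times_S U_k}$ lifts to $\cF(X\times_S U_k)$. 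Since $f$ is strict, $X\times_S U\to U$ is again strict finite, and by base change we may rename $U$ as $S$. The claim then becomes: every section $s\in\cG(X)$ lifts after pulling back along some log étale covering of $S$.

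\emph{Step 1.} Because $\cF\to\cG$ is surjective, there is a log étale covering $\{V_i\to X\}$ on which $s$ lifts. Since $X$ is qcqs, finitely many $V_i$ suffice. The problem is that $\{V_i\to X\}$ need not be pulled back from a covering of $S$.

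\emph{Step 2.} I will reduce to the classical situation by passing to a strict henselization. Pick a point $s_0\in S$ and replace $S$ by the limit of strict étale neighborhoods of a geometric point over $s_0$, using the limit description $S'_\letale\simeq\lim (S_i)_\letale$ from \cite[Corollary 3.5.7]{AHLS} invoked in Proposition \ref{dim.2}. Since $f$ is strict finite, $X\times_S S'$ is a finite disjoint union of schemes that are strictly henselian local, all carrying the log structure pulled back from $S'$. The statement over $S'$ then descends to some strict étale neighborhood $S_i$ by a standard limit argument. In this way we reduce to the case where $X=\coprod_m X_m$ is a finite disjoint union, each $X_m\to S$ is strict and finite, and $\ul{X_m}$ is strictly local over the strictly local $\ul{S}$.

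\emph{Step 3.} This is where I expect the main obstacle: refining the log étale covering $\{V_i\to X_m\}$ of the strictly local log scheme $X_m$ by a covering pulled back from $S$. Since $\ul{X_m}$ is strictly henselian, every strict étale covering of $X_m$ splits. The remaining, genuinely logarithmic part of the cover is controlled by the log structure at the closed point, which is the same as that of $S$ because $f$ is strict. I would try the following. Use Proposition \ref{logv.10}-type arguments (log blow-ups $\T_\Sigma\to\A_P$ together with the $n$-multiplication maps) to show that any log étale covering of a strictly local fs log scheme with neat chart $P$ is refined by one of the form $X_m\times_{\A_P}\T_{\Sigma'}\to X_m$ composed with Zariski covers. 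Coverings of this form are pulled back from the log étale covering $S\times_{\A_P}\T_{\Sigma'}\to S$. Taking a common refinement over the finitely many $m$, which is possible since all $X_m$ share the chart $P$ from $S$, gives a single covering of $S$ over which $s$ lifts.

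If this refinement statement is hard to obtain directly, a fallback is to compute stalks of $f_*\cF$ at log geometric points of $S$ via Nakayama's description of log étale points \cite{Nak2}. Strictness and finiteness of $f$ would show that such a stalk is a finite product of stalks of $\cF$, and exactness would follow immediately.
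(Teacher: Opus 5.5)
\textbf{There is a genuine gap in Step~3.} Your reduction to the refinement statement is correct, and it is where the paper starts: every log \'etale covering of $X$ should be refined by the pull-back of a log \'etale covering $U\to S$ with $U\times_S X\simeq\coprod_i U_i$ and each $U_i\to X$ factoring through the covering. Step~2 is plausible. But Step~3 carries the whole content of the lemma, and its key assertion fails in two ways.

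\emph{First, after the log blow-up, locality is lost.} The scheme $X_m\times_{\A_P}\T_\Sigma$ is no longer local: its fiber over the closed point is a positive-dimensional toric scheme. So what remains of the covering after the log blow-up and the Kummer pull-back is a genuine strict \'etale covering of a non-local scheme, and in general it has no Zariski refinement. For example, let $X_m=S$ be the strict henselization of $(\A^2,\N^2)$ at the origin, with $2$ invertible. Let $t$ be a coordinate on the exceptional line $E\simeq\P^1$ of the log blow-up at the closed point. The complement of $E$, together with the \'etale double covers $s^2=1+t$, $s^2=1-t$, $s^2=1+t^{-1}$ taken over the opens where the right-hand sides are units, is a log \'etale covering of $S$. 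The corresponding extensions of $k(t)$ are ramified at $t=-1$ or $t=1$, while $t=u^n$ ($n$ invertible) is unramified there. Hence no member receives a map over $S$ from a Zariski neighbourhood of the generic point of the exceptional divisor of $X_m\times_{\A_P}\T_{\Sigma'}$ lying over $E$, and no refinement of your form exists. The strict henselization of Step~2 is destroyed by the first log blow-up and buys nothing.

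\emph{Second, the upstairs coverings are not pulled back.} Even Zariski coverings of $X_m\times_{\A_P}\T_{\Sigma'}$, let alone strict \'etale ones, need not come from $S\times_{\A_P}\T_{\Sigma'}$, because $X_m\to S$ is merely finite. Your sentence ``coverings of this form are pulled back'' applies only to the factor $X_m\times_{\A_P}\T_{\Sigma'}\to X_m$.

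\emph{Third}, $\T_{\Sigma'}\to\T_\Sigma$ is log \'etale only if $n$ is invertible on the base, and you do not arrange this.

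\textbf{The missing idea.} Apply the classical result \cite[Tag 04DQ]{stacks-project} to the strict finite morphism obtained \emph{after} the log modifications, not to a strictly local one; then no strict henselization is needed. The paper proceeds as follows.
\begin{enumerate}[(a)]
\item Work log \'etale locally on $S$ with a chart $P$; $X$ inherits it because $f$ is strict.
\item A log blow-up of $S$, pulled back to $X$, makes the covering Kummer \'etale by \cite[Theorem III.2.6.7]{Ogu}.
\item After a strict \'etale refinement, \cite[Theorem IV.3.3.1]{Ogu} gives pieces $V_j$ that are strict \'etale over $X\times_{\A_P}\A_{Q_j}$, with index $m_j$ invertible on $V_j$.
\item The images of the $V_j$ are open, so Tag 04DQ applied to this Zariski covering of $X$ reduces to a single surjective piece.
\item Then $m$ is invertible on $X$, hence on the open neighbourhood $S[1/m]$ of the closed set $f(X)$.
\item Pulling back along the Kummer \'etale covering $S\times_{\A_P}\A_Q\to S$ makes the covering strict \'etale, and Tag 04DQ concludes.
\end{enumerate}

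\textbf{The fallback does not work either.} Describing the stalks of $f_*\cF$ at points of $\Sh(S_\letale)$ as finite products of stalks of $\cF$ is a restatement of the lemma. Moreover, such points see every log blow-up of $S$, not just a log geometric point of $S$, so computing them requires the same refinement statement.
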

\begin{proof}
Arguing as in
\cite[\href{https://stacks.math.columbia.edu/tag/04C2}{Tag 04C2}]{stacks-project},
it suffices to show the log \'etale analogue of the property (B) in \cite[\href{https://stacks.math.columbia.edu/tag/04C6}{Tag 04C6}]{stacks-project}.
Since the class of strict finite morphisms is closed under pullbacks,
it suffices to show the following:
Let $g\colon V\to X$ be a log \'etale covering of qcqs fs log schemes.
Then there exists a log \'etale covering $U\to S$ of qcqs fs log schemes such that we have $U\times_S X\simeq \amalg_{i\in I} U_i$ with the property that each $U_i\to X$ factors through $V$.

We can work log \'etale locally on $S$,
so we may assume that $S$ has a chart $P$.
Then $X$ has a chart $P$.
By \cite[Theorem III.2.6.7]{Ogu},
we may also assume that $g$ is $\Q$-integral and hence Kummer \'etale.
We can replace $V$ by its strict \'etale covering.
Hence by \cite[Theorem IV.3.3.1]{Ogu},
we may assume $V\simeq \amalg_{j\in J} V_j$ such that each $V_j\to X$ admits a chart $\theta_j\colon P\to Q_j$ such that $m:=\lvert \cok(\theta_j^\gp) \rvert$ is finite and invertible on $V_j$ and the induced morphism $V_j\to X\times_{\A_P} \A_{Q_j}$ is strict \'etale.

Let $g_j\colon V_j\to X$ be the induced morphism, which is an open morphism by \cite[Remark 5.7]{MR2604925}.
Apply \cite[\href{https://stacks.math.columbia.edu/tag/04DQ}{Tag 04DQ}]{stacks-project} to the Zariski covering $\{g_j(V_j)\to X\}_{j\in J}$ to obtain a strict \'etale covering $U\to S$ such that $U\times_S X\simeq \amalg_{i\in I} U_i$ with the property that $U_i\to X$ factors through $g_j(V_j)$ for some $j$.
Replacing $S$ by $U$ and $X$ by $U_i$,
we reduce to the case where $g_j$ is surjective for some $j\in J$.
Then it suffices to show the claim for the log \'etale covering $g_j$,
so we reduce to the case where $J=\{j\}$ and $V_j=V$.
We set $Q:=Q_j$ and $m:=m_j$.

Then $m$ is invertible on $X$.
Since $f(X)$ is closed,
we can replace $S$ by $S[1/m]$,
so we may assume that $m$ is invertible on $S$.
Then the projection $S\times_{\A_P} \A_Q\to S$ is a log \'etale covering by \cite[Corollary IV.3.1.10]{Ogu}.
Replacing $S$ by $S\times_{\A_P} \A_Q$,
we reduce to the case where $g$ is strict \'etale.
Then use the result for schemes \cite[\href{https://stacks.math.columbia.edu/tag/04DQ}{Tag 04DQ}]{stacks-project}.
\end{proof}

We first treat a special case of Proposition \ref{logv.4} as follows.

\begin{lem}
\label{logv.3}
Let $f\colon X\to S$ be a separated strict \'etale morphism of qcqs fs log schemes,
and let $\Lambda$ be a torsion commutative ring.
Then the morphism
\[
f_\sharp \cF\to f_! \cF
\]
in $\rD_\letale(S,\Lambda)$ 
is an isomorphism for every sheaf $\cF$ of $\Lambda$-modules on $X_\letale$.
The analogous statement holds in the Kummer \'etale case too.
\end{lem}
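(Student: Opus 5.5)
The map in question is the composite of Construction \ref{logv.5}, for a factorization $f=g\circ j$ with $j$ an open immersion and $g$ proper. I will choose this factorization with $g$ \emph{strict finite}. Since $f$ is separated, strict and étale, Zariski's main theorem applied to $\ul{f}$, working locally on $S$ if necessary, gives $\ul{X}\to \ul{Y}\to \ul{S}$ with an open immersion followed by a finite morphism. Giving $Y$ the log structure pulled back from $S$ makes $g$ strict finite and $j$ a strict open immersion. I expect the comparison map not to depend on the chosen factorization, by the usual argument through a common refinement; I will only need the map for this particular one.

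\textbf{Reduction to degree $0$.} Both sides are concentrated in degree $0$.
\begin{itemize}
\item For a strict étale morphism, $f^*$ on sheaves is restriction. Its left adjoint on abelian sheaves is the sheafification of the left Kan extension, and this is exact. So the derived $f_\sharp$ of a sheaf is that sheaf placed in degree $0$.
\item The same holds for $j_\sharp$, which is extension by zero.
\item $g_*$ is exact by Lemma \ref{logv.8}.
\end{itemize}
Hence the statement reduces to checking that a morphism of sheaves of $\Lambda$-modules on $S_\letale$ is an isomorphism.

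\textbf{Local reduction.} I will argue as in the proof of Proposition \ref{dim.5}, where isomorphisms are detected after pulling back to strict henselizations $S'\to S$ at geometric points $\ol{s}$. So it suffices to check after applying $h^*$ for such an $h\colon S'\to S$. The needed compatibilities are:
\begin{itemize}
\item $h^*$ commutes with $f_\sharp$ and $j_\sharp$, because $f$ and $j$ are strict étale and formation of the left adjoint commutes with base change.
\item $h^*g_*\simeq g'_*h'^*$ by Theorem \ref{dim.4}(1), since $g$ is proper.
\end{itemize}
Thus we may assume that $\ul{S}$ is strictly local, keeping its log structure.

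\textbf{The local case.} Now $\ul{Y}$ is finite over a strictly henselian local scheme, so it splits as a finite disjoint union of local schemes, each finite over $\ul{S}$. The components meeting the image of $X$ above the closed point are isomorphic to $S$, because $f$ is étale and the residue field is separably closed. The remaining part $X_0$ of $X$ lies over the complement of the closed point. Accordingly,
\[
f_\sharp\cF\simeq \bigoplus \cF_k \oplus f_{0\sharp}\cF\vert_{X_0}
\]
and similarly for $g_*j_\sharp$, and the comparison map is the identity on the summands indexed by the sections. The remaining summands are supported away from the closed point. To handle them, I will prove the statement by noetherian induction on $S$, or equivalently check it on strict henselizations at all geometric points in turn, using the localization sequence of Proposition \ref{dim.7}.

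\textbf{Main obstacle.} The hard part is justifying that strict henselizations detect isomorphisms of log étale sheaves and that $f_\sharp$ commutes with this pro-base change. The log étale topos has extra "log points" over each geometric point. To handle this, I will rely on the limit description of the site (\cite[Corollary 3.5.7]{AHLS}) used in Proposition \ref{dim.2}. That description reduces the problem to evaluating on objects of $S'_\letale$. Over $S'$, the splitting above holds for every log étale $U\to S'$ after base change, since strictness of $f$ and $g$ is preserved. The Kummer étale case is identical.
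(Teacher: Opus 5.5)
Your preliminary steps are sound: the strict finite $g$ from Zariski's main theorem, the exactness of $f_\sharp$, $j_\sharp$ and $g_*$, and the splitting of $Y$ over a strictly local base. The argument breaks down at the leftover summand $f_{0\sharp}\cF\vert_{X_0}\to g_{0*}j_{0\sharp}\cF\vert_{X_0}$. Your detection step asks for $h^*$ of the comparison map to be an isomorphism of sheaves on all of $(S_{\ol{s}})_\letale$, so you must control it over $S_{\ol{s}}-\{\ol{s}\}$, and neither remedy you propose works. The lemma is stated for qcqs $S$, so noetherian induction is unavailable: strictly local rings of qcqs schemes can have infinite Krull dimension, and an induction along generizations need not terminate. ``Checking at strict henselizations at all geometric points in turn'' is circular, because each strict henselization at a generization produces its own leftover summand over the complement of its closed point. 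For noetherian $S$ you could close the loop by induction on $\dim\mathcal{O}_{S,s}$. That would prove a weaker statement, and it would also rely on Theorem \ref{dim.4}(1) and Proposition \ref{dim.7}, which are only stated for noetherian log schemes; for the strict finite $g$ and the pro-strict-\'etale $h$, base change should be checked directly instead.

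The missing idea is that only the closed point matters, provided you work before log \'etale sheafification. The paper writes both sides as log \'etale sheafifications of explicit additive presheaves: $f_\sharp^\add\cF$, a sum over maps $U\to X$, and $g_*j_\sharp^\add\cF$, a sum over connected components of $U\times_S Y$ factoring through $X$. It then shows the comparison map is already an isomorphism after strict \'etale sheafification. By base change along objects of $S_\letale$, this reduces to the stalk at a geometric point $\ol{s}$ of $S$. There, only the components of $S_{\ol{s}}\times_S Y$ lying entirely in $X$ contribute; each is a section isomorphic to $S_{\ol{s}}$ via the graph construction, and they correspond exactly to the maps $\ol{s}\to X$. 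The part of $X$ away from the closed fiber contributes zero to both stalks, so no induction or localization sequence is needed. If you instead work with the sheaves themselves, you cannot just read off stalks at $\ol{s}$. The value of a log \'etale sheaf on a strictly local $S$ with nontrivial log structure is not a naive stalk. You would need an additional henselian-type input, comparing sections over $S$ with sections over its closed point, to kill the leftover summand.
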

\begin{proof}
We focus on the log \'etale case since the proofs are similar.
We can work strict \'etale locally on $S$,
so we may assume that $S$ has a chart $P$ neat at a point of $S$.

See \cite[Lemma in Chapter 6, p.\ 294]{Ayoetale} for a similar proof in the non-log case.
Let us work at the level of sheaves of $\Lambda$-modules and add the notation $R$ for right derived functors.
By Zariski's main theorem,
we have a factorization $X\xrightarrow{j} Y \xrightarrow{g} S$ such that $j$ is an open immersion and $g$ is strict finite.
The statement is $f_\sharp \cF\simeq Rf_! \cF$.
Since $g_*$ is exact by Lemma \ref{logv.8},
we have $f_!:=g_* j_\sharp \simeq Rg_* j_\sharp \simeq Rf_!$.

The \emph{additive topology} on the category of qcqs fs log schemes is defined to be the coarsest topology such that for every finite family of qcqs fs log schemes $\{X_i\}_{i\in I}$, $\{X_i\to \amalg_{i\in I}X_i\}_{i\in I}$ is a covering family.
A standard argument shows that $f_\sharp \cF$ is the log \'etale sheafification of the additive sheaf $f_\sharp^\add \cF$ such that
\begin{equation}
\label{logv.3.1}
f_\sharp^\add \cF(U)
\simeq
\bigoplus_{
  \begin{tikzpicture}[
    baseline=-0.5ex,
    >=stealth,
    scale=0.55,
    every node/.style={font=\scriptsize, inner sep=1pt}
  ]
    \node (X) at (0,0.8) {$U$};
    \node (Y) at (1.0,0.8) {$X$};
    \node (Z) at (0.5,0) {$S$};

    \draw[->] (X) -- (Y);
    \draw[->] (X) -- (Z);
    \draw[->] (Y) -- (Z);
  \end{tikzpicture}
}
\cF(U\to X)
\end{equation}
for every connected $U\in S_\letale$.
On the other hand,
we have
\begin{equation}
\label{logv.3.2}
g_* f_\sharp'^\add b_* \cF(U)
\simeq
\bigoplus_{
  \begin{tikzpicture}[
    baseline=-0.5ex,
    >=stealth,
    scale=0.55,
    every node/.style={font=\scriptsize, inner sep=1pt}
  ]
    \node (V) at (-2.5,0.8) {$V$};
    \node (X) at (-0.75,0.8) {$U\times_S Y$};
    \node (Y) at (1.75,0.8) {$X\times_S Y$};
    \node (Z) at (0.5,0) {$Y$};

    \draw[->] (V) -- (X);
    \draw[->] (X) -- (Y);
    \draw[->] (X) -- (Z);
    \draw[->] (Y) -- (Z);
  \end{tikzpicture}
}
\cF(V\times_{X\times_S Y}X \to X),
\end{equation}
where $V$ is a connected components of $U\times_S Y$.
We also have
\begin{equation}
\label{logv.3.3}
g_*f_\sharp'^\add b_\sharp^\add \cF(U)
\simeq
g_*j_\sharp^\add\cF(U)
\simeq
\bigoplus_{
  \begin{tikzpicture}[
    baseline=-0.5ex,
    >=stealth,
    scale=0.55,
    every node/.style={font=\scriptsize, inner sep=1pt}
  ]
    \node (V) at (-2.25,0.8) {$V$};
    \node (X) at (-0.5,0.8) {$U\times_S Y$};
    \node (Y) at (1.5,0.8) {$X$};
    \node (Z) at (0.5,0) {$Y$};

    \draw[->] (V) -- (X);
    \draw[->] (X) -- (Y);
    \draw[->] (X) -- (Z);
    \draw[->] (Y) -- (Z);
  \end{tikzpicture}
}
\cF(V\to X),
\end{equation}
where $V$ is a connected components of $U\times_S Y$.

The morphism \eqref{logv.3.1}$\to$\eqref{logv.3.2} is obtained by sending $U$ to the graph morphism $U\to U\times_S Y$,
which is possible since $U$ is a connected component of $U\times_S Y$ and we have $U\times_{X\times_S Y} X \simeq U$.
The isomorphism \eqref{logv.3.3}$\to$\eqref{logv.3.2} is obtained by sending $V$ to $V$,
which is possible since $V\times_{X\times_S Y} X\simeq V$.
Hence the morphism \eqref{logv.3.1}$\to$\eqref{logv.3.3} is obtained by sending $U$ to the graph morphism $U\to U\times_S Y$.

It suffices to show that the morphism of the stalks of the strict \'etale sheafifications
\begin{equation}
\label{logv.3.4}
(a_\setale f_\sharp^\add \cF)_{\ol{s}}
\to
(a_\setale g_*j_\sharp^\add\cF)_{\ol{s}}
\end{equation}
is an isomorphism at any geometric point $\ol{s}$ of $S$.
We have
\[
(a_\setale f_\sharp^\add \cF)_{\ol{s}}
\simeq
\bigoplus_{
  \begin{tikzpicture}[
    baseline=-0.5ex,
    >=stealth,
    scale=0.55,
    every node/.style={font=\scriptsize, inner sep=1pt}
  ]
    \node (X) at (0,0.8) {$\ol{s}$};
    \node (Y) at (1.0,0.8) {$X$};
    \node (Z) at (0.5,0) {$S$};

    \draw[->] (X) -- (Y);
    \draw[->] (X) -- (Z);
    \draw[->] (Y) -- (Z);
  \end{tikzpicture}
}
\cF_{\ol{s}\to X}.
\]
On the other hand,
since the underlying scheme of the strict henselization $S_{\ol{s}}$ is strictly local,
the strict finite morphism $S_{\ol{s}}\times_S Y\to S_{\ol{s}}$ is a finite disjoint union of the spectra of local rings with the induced log structures.
Moreover, for any connected component $V$ of $S_{\ol{s}}\times_S Y$ mapping to $X$,
the induced morphism $V\to S_{\ol{s}}$ is \'etale and hence admits a section $i\colon S_{\ol{s}}\to V$.
Then the composite $c\colon S_{\ol{s}}\xrightarrow{i} V\to S_{\ol{s}} \times_S X$ is the graph morphism of the composite $d\colon S_{\ol{s}}\to V\to X$.
Hence $c$ exhibits $S_{\ol{s}}$ as a connected component of $S_{\ol{s}} \times_S X$, so $i$ is an isomorphism.
It follows that the assignment
\[
(V\to S_{\ol{s}}\times_S Y)
\mapsto
(d\colon S_{\ol{s}}\to X)
\]
yields an inverse of \eqref{logv.3.4}.
\end{proof}

Now, we prove the main result in this section.

\begin{prop}
\label{logv.4}
Let $f\colon X\to S$ be a separated log \'etale (resp.\ Kummer \'etale) morphism of qcqs fs log schemes,
and let $\Lambda$ be a torsion commutative ring.
Then the morphism
\[
f_\sharp \cF\to f_! \cF
\]
in $\rD_\letale(S,\Lambda)$ (resp.\ $\rD_\ket(S,\Lambda)$)
is an isomorphism for every $\cF\in \rD_\letale(S,\Lambda)$ (resp.\ $\rD_\ket(S,\Lambda)$) concentrated in degree $0$.
\end{prop}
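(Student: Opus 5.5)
The plan is to reduce to the strict étale case, Lemma \ref{logv.3}, by base change along a log étale (resp.\ Kummer étale) covering of $S$ that makes $f$ strict. We treat the log étale case; the Kummer étale case is the same argument with the log blow-up omitted. Throughout, $f_!$ is the functor $g_*j_\sharp$ attached to a factorization $X\xrightarrow{j}Y\xrightarrow{g}S$, and the comparison map is the one of Construction \ref{logv.5}.

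\emph{Step 1: localize.} Whether $f_\sharp\cF\to f_!\cF$ is an isomorphism can be checked strict étale locally on $S$. Both $f_\sharp$ and $g_*j_\sharp$ commute with strict étale base change on $S$, and so does the map of Construction \ref{logv.5}, since it is built from units and counits of base-change-compatible adjunctions. So we may assume that $S$ has a chart $P$ neat at a point.

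\emph{Step 2: base change to a strict situation.} We then run the argument of Proposition \ref{logv.10} on the base rather than on $X$. Let $h\colon S':=S\times_{\A_P}\T_{\Sigma'}\to S$, where $\T_\Sigma\to\A_P$ is a log blow-up making $f$ $\Q$-integral and $\Sigma'\to\Sigma$ is multiplication by a suitable $n$. Then $f'\colon X':=X\times_S S'\to S'$ is strict, and it is separated and log étale, hence strict étale. Likewise $h$ is a log étale covering, so $h^*$ is conservative; in the Kummer case $h$ is the finite Kummer map induced by multiplication by $n$ on $P$. Here $n$ must be chosen invertible on $S$; if Ogus's theorem only gives divisibility conditions, we would first pass to $S[1/n]$ in the relevant components, or argue stalkwise on $S$.

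\emph{Step 3: compatibility with base change.} We need three facts.
\begin{itemize}
\item[(a)] $h^*f_\sharp\simeq f'_\sharp h'^*$. This is formal for log étale $f$.
\item[(b)] $h^*g_*j_\sharp\simeq g'_*j'_\sharp h'^*$. For $j_\sharp$ this is formal. For $g_*$ it is proper base change: Theorem \ref{dim.4}(1) in the noetherian case, or Kato--Nakayama's bounded-below version in general, which suffices because $j_\sharp\cF$ is a sheaf in degree $0$.
\item[(c)] The map of Construction \ref{logv.5} is compatible with these isomorphisms, because every ingredient ($b_*\simeq b_\sharp$ and the exchange $f_\sharp g'_*\to g_*f'_\sharp$) is a base change of the corresponding ingredient over $S'$.
\end{itemize}
Given (a)--(c), $h^*$ of our map identifies with $f'_\sharp h'^*\cF\to f'_!h'^*\cF$, computed using the factorization $X'\to Y'\to S'$. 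We must still identify this with the Zariski-main-theorem factorization used in Lemma \ref{logv.3}. The plan is to show that the map is independent of the chosen compactification in the usual way: any two compactifications are dominated by a third, namely the closure of $X$ in $Y_1\times_S Y_2$. Lemma \ref{logv.3} then gives the isomorphism over $S'$, and conservativity of $h^*$ finishes the proof.

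The main obstacle is Step 3. Proving the compatibility of the map of Construction \ref{logv.5} with base change and its independence of the factorization is mostly bookkeeping, although it is a lot of it. The genuinely delicate point is making sure $h$ is a covering: the root-extraction step requires $n$ invertible. I would try first to use that log étale (resp.\ Kummer étale) locally, $f$ is already étale-locally of the form $\A_Q\times_{\A_P}S$ with $\lvert\cok(\theta^\gp)\rvert$ invertible, as in the proof of Lemma \ref{logv.8}, so that the required $n$ can be taken to be that invertible cokernel order.
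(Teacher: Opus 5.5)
\textbf{There is a genuine gap at the covering step.} Your overall plan matches the paper's: make $f$ strict by a log blow-up and root extraction on the base, use conservativity of pullback along a covering, and conclude by Lemma \ref{logv.3}. The bookkeeping in your Step 3 is also what the paper uses implicitly when it says the question is log \'etale local on $S$. However, the point you flag as delicate is not resolved by your proposed fix.

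The cokernel order $n$ of a Kummer chart of $f$ is invertible on $X$, not on $S$. So $S\times_{\A_P}\A_{P'}\to S$ (with $P\to P'$ multiplication by $n$) is a Kummer \'etale covering only over $S[1/n]$, and $h^*$ need not be conservative.

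No better choice of covering avoids this. Take $S=\A^1_{\Z_{(p)}}$ with log structure given by $t$, and let $X=\Spec\Q[s]$ with log structure given by $s$, mapping to $S$ by $t\mapsto s^p$. Then $f$ is a Kummer \'etale cover of $S[1/p]$ followed by an open immersion, so it is separated and Kummer \'etale. But near the point $t=p=0$, a log \'etale neighborhood only extracts roots of $t$ of order prime to $p$. Hence no log \'etale covering of $S$ makes $f$ strict there. Pulling back to the strict henselization at that point leaves the same problem.

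Passing to $S[1/n]$ is also not a localization on $S$. The analogy with Lemma \ref{logv.8} breaks down: there $f(X)$ is closed, so $S[1/m]$ and $S-f(X)$ form an open cover of $S$. Here $f(X)$ is open, and its closure can contain points where $n$ is not invertible.

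\textbf{The missing idea is to reduce to the case where $f$ is surjective.} The paper does this after reducing to $f$ Kummer \'etale via the log blow-up, and before extracting roots.

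\begin{enumerate}
\item The image $U$ of $f$ is open. Let $i\colon Z\to S$ be a strict closed complement.
\item By the localization sequence of \cite[5.2]{Nak2} (resp.\ \cite[2.8.3]{Nak1}), it suffices to check the map after applying $i^*$ and after restricting to $U$.
\item Since $X\times_S Z=\emptyset$, you get $i^*f_\sharp\cF\simeq 0$ formally.
\item You also get $i^*f_!\cF=i^*g_*j_\sharp\cF\simeq 0$ by proper base change for $g$ along $i$. The bounded-below statement \cite[Theorem 5.1]{Nak1} suffices because $\cF$ is in degree $0$.
\item So you may replace $S$ by $U$, which makes $f$ surjective. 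Then $n$, being invertible on $X$, is invertible on $S$.
\end{enumerate}

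After this step your $h$ is a genuine Kummer \'etale covering, and the rest of your argument goes through.
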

\begin{proof}
We focus on the log \'etale case since the proofs are similar.

The question is log \'etale local on $S$.
Hence by \cite[Theorem III.2.6.7]{Ogu},
we may assume that $f$ is Kummer \'etale.
We may also assume that $S$ has a chart $P$ neat at a point of $S$.
By \cite[Remark 5.7]{MR2604925},
the image $U$ of $f$ is open.
Let $j\colon U\to S$ be the obvious open immersion,
and let $i\colon Z\to S$ be a closed complement of $j$.
By \cite[5.2]{Nak2} (\cite[2.8.3]{Nak1} in the Kummer \'etale case),
it suffices to show
\[
i^* f_\sharp \cF\simeq i^* f_!\cF,
\;
j^*f_\sharp \cF \simeq j^* f_!\cF.
\]
Since $Z\times_S U\simeq 0$,
we have $i^*f_!\cF\simeq 0$ by \cite[Theorem 5.1]{Nak1}.
We also have $i^*f_\sharp \cF\simeq 0$.
Hence replacing $S$ by $U$,
we reduce to the case where $f$ is surjective.

By \cite[Theorem I.4.9.1]{Ogu},
there exists an integer $n>0$ such that $n$ is invertible on $X$ and the pullback
\[
X\times_{\A_P} \A_{P'}\to S\times_{\A_P} \A_{P'}
\]
is saturated and hence strict, where $P\to P=:P'$ is the multiplication by $n$.
Since $f$ is surjective,
$n$ is invertible on $S$.
Hence the projection $S\times_{\A_P} \A_{P'}\to S$ is Kummer \'etale.
Since the question is log \'etale local on $S$,
we reduce to the case where $f$ is strict \'etale.

Lemma \ref{logv.3} finishes the proof.
\end{proof}

Here is an application of Proposition \ref{logv.4}.

\begin{prop}
\label{logv.1}
Let $S$ be a qcqs fs log scheme,
and let $\Lambda$ be a torsion commutative ring.
Assume that $S$ has a chart neat at a point of $S$.
Then $\rD_\letale(S,\Lambda)$ (resp.\ $\rD_\ket(S,\Lambda)$) is generated under colimits and shifts by the objects of the form $v_* \Lambda_V$ for all proper (resp.\ finite Kummer) morphisms $v\colon V\to S$.
\end{prop}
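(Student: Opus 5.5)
The plan is to start from the obvious generators and then trade $f_\sharp$ for $f_!$. The category $\rD_\letale(S,\Lambda)$ (resp.\ $\rD_\ket(S,\Lambda)$) is generated under colimits and shifts by the objects $\Lambda_X=f_\sharp\Lambda$ for $f\colon X\to S$ in $S_\letale$ (resp.\ $S_\ket$). Every object of the site is qcqs, so we may cover $X$ by finitely many affine opens. A Mayer--Vietoris / Čech argument, with finite limits of representables expressed via colimits in the stable setting, then reduces us to the case where $X$ is affine. In that case $f$ is separated.

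Let $\cC$ be the full subcategory generated under colimits and shifts by the objects $v_*\Lambda_V$ with $v$ proper (resp.\ finite Kummer). It suffices to show $\Lambda_X\in\cC$ for separated $f$. For this we use that the question is local for the log étale (resp.\ Kummer étale) topology on $X$.

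\textbf{Step 1: localize on $X$.} By Proposition \ref{logv.10}, there is a log étale (resp.\ Kummer étale) covering $\{X_i\to X\}$, which we may take finite by quasi-compactness. Each $X_i$ is affine, hence separated over $S$, and admits a factorization $X_i\xrightarrow{j_i}Y_i\xrightarrow{p_i}S$ with $j_i$ an open immersion and $p_i$ proper (resp.\ finite Kummer). The Čech nerve gives $\Lambda_X\simeq\colim\Lambda_{X_{i_0}\times_X\cdots\times_X X_{i_n}}$, a geometric realization (hypercompleteness may require a hypercover instead, handled the same way). The terms are again separated log étale over $S$, but only finitely many such fiber products are needed, so we iterate. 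To avoid circularity, we argue with all separated $X$ at once: we first prove the claim for $X$ admitting such a factorization, and then deduce the general case from the Čech colimit, once Step 2 shows every term admits a factorization after a further localization. A cleaner route is to apply Proposition \ref{logv.10} to each finite fiber product as well and use a hypercovering all of whose terms factor.

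\textbf{Step 2: the factorized case.} Suppose $f=p\circ j$ with $j$ an open immersion and $p$ proper. By Proposition \ref{logv.4} applied to $\cF=\Lambda$ (degree $0$), we get $\Lambda_X=f_\sharp\Lambda\simeq f_!\Lambda=p_*j_\sharp\Lambda$. Let $i\colon Z\to Y$ be the strict closed complement of $j$. Proposition \ref{dim.7} gives a fiber sequence $j_\sharp\Lambda\to\Lambda_Y\to i_*\Lambda_Z$. Since $p_*$ is exact, $p_*j_\sharp\Lambda$ is the fiber of $p_*\Lambda\to p_*i_*\Lambda=(pi)_*\Lambda$. Both $p$ and $pi$ are proper (resp.\ finite Kummer, since $i$ is a strict closed immersion and hence finite), so both terms lie in $\cC$, and hence so does their fiber, as $\cC$ is stable.

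\textbf{Main obstacle.} The hardest step is Step 1: making the descent from the local factorizations of Proposition \ref{logv.10} rigorous. Hypersheaf descent requires a hypercover rather than a Čech nerve, and its terms must be separated with local factorizations. The approach I would try first is to build a hypercover by repeatedly applying Proposition \ref{logv.10} to affine refinements of the fiber products, which are separated over $S$ because $S$ is qcqs and affine maps are separated. One then uses that $\cC$ is closed under colimits, so that $\Lambda_X$ is the colimit of the $\Lambda$'s of the terms.
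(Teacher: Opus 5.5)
Your proposal is correct and follows the paper's argument: localize on $X$ with Proposition \ref{logv.10}, identify $f_\sharp\Lambda_X\simeq f_!\Lambda_X=p_*j_\sharp\Lambda$ via Proposition \ref{logv.4}, and express this as the fiber of $p_*\Lambda_Y\to(pi)_*\Lambda_Z$ using the localization sequence. Your hypercover whose terms are affine and admit factorizations is a valid way to justify the paper's terse ``log \'etale locally on $X$''. A hypercover is needed because the higher \v{C}ech terms need not admit factorizations, not because of hypercompleteness, and with it the initial affine reduction is redundant. For the fiber sequence on $Y$, cite Nakayama's degree-zero statement \cite[5.2]{Nak2} (resp.\ \cite[2.8.3]{Nak1}) as the paper does, since Proposition \ref{dim.7} assumes noetherian log schemes while $Y$ here is only qcqs.
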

\begin{proof}
We focus on the log \'etale case since the proofs are similar.

Let $\cC$ be the full subcategory generated under colimits and shifts by such objects,
and let $f\colon X\to S$ be a log \'etale morphism.
It suffices to show $f_\sharp \Lambda_X\in \cC$ log \'etale locally on $X$.
Hence by Proposition \ref{logv.10},
we may assume that $f$ admits a factorization $X\xrightarrow{j} Y\xrightarrow{g} S$ such that $j$ is an open immersion and $g$ is proper.

By \cite[5.2]{Nak2} (\cite[2.8.3]{Nak1} in the Kummer \'etale case),
we have a fiber sequence
\[
f_! \Lambda_X \to g_* \Lambda_Y \to g_* i_* \Lambda_{Y-X},
\]
where $i\colon Y-X\to Y$ is the induced strict closed immersion.
To conclude,
observe that we have $f_\sharp\Lambda_X\simeq f_!\Lambda_X$ by Proposition \ref{logv.4}.
\end{proof}

\section{\texorpdfstring{$\A^1$}{A1}-invariance and \texorpdfstring{$\square$}{box}-invariance}
\label{5}

Recall from \cite[Theorem 1.3.2]{CDetale} that for a noetherian scheme $S$ and a torsion commutative ring $\Lambda$ killed by an integer invertible on $S$,
the functor
\[
p^* \colon \rD_\et(S,\Lambda)\to \rD_\et(S\times \A^1,\Lambda)
\]
is fully faithful.
We extend this $\A^1$-invariance property to the logarithmic case as follows.

\begin{thm}
\label{A1.1}
Let $S$ be a noetherian fs log scheme,
and let $\Lambda$ be a torsion commutative ring killed by an integer invertible on $S$.
Then the functor
\[
p^*\colon \rD_\letale(S,\Lambda)
\to
\rD_\letale(S\times \A^1,\Lambda)
\]
is fully faithful,
where $p\colon S\times \A^1\to S$ is the projection.
\end{thm}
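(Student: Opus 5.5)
We must show that the unit $\cF\to p_*p^*\cF$ is an isomorphism for every $\cF\in \rD_\letale(S,\Lambda)$. The plan is to reduce in stages to a statement about sheaves on a log point, where it becomes the classical $\A^1$-invariance of \cite[Theorem 1.3.2]{CDetale}.

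\textbf{Reduction to strictly local $S$ and degree $0$.} Since $p$ is of finite type, $p_*$ preserves colimits by Proposition \ref{dim.3}, and so does $p^*$. Hence it suffices to treat $\cF$ concentrated in degree $0$ (a sheaf of $\Lambda$-modules), because these generate $\rD_\letale(S,\Lambda)$ under colimits and shifts. The question is strict \'etale local on $S$. By Proposition \ref{dim.2}(2), formation of $p_*$ commutes with pullback to a strict henselization $S'$ at a geometric point. We may therefore assume that $\ul{S}$ is strictly local and that $S$ has a chart $P$ neat at the closed point. It then suffices to check the isomorphism on $R\Gamma_\letale(U,-)$ for $U\in S_\letale$, after replacing $S$ by $U$, which is again of finite type over a strictly local scheme. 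By Proposition \ref{dim.6}, all relevant cohomology functors commute with colimits.

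\textbf{Reduction to proper generators.} By Proposition \ref{logv.1}, I would instead test against generators: it suffices to show
\[
\hom(v_*\Lambda_V,\cF)\simeq \hom(v_*\Lambda_V,p_*p^*\cF)
\]
for proper $v\colon V\to S$. Proper base change (Theorem \ref{dim.4}(1)) applies to the cartesian square given by $V\times\A^1\to S\times\A^1$. Combined with the adjunction $v^*\dashv v_*$, it is perhaps cleaner to note that $v^*p_*\simeq p_*v^*$ would be needed, which is the log smooth base change Theorem \ref{dim.4}(2) applied with $g=p$. So I would use Theorem \ref{dim.4}(2) directly: $p$ is log smooth, which lets us compute stalks of $p_*p^*\cF$. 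The goal reduces to showing $R\Gamma_\letale(S\times\A^1,p^*\cF)\simeq R\Gamma_\letale(S,\cF)$ for strictly local $S$ and a sheaf $\cF$.

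\textbf{Key step: comparison with the underlying scheme.} Use the morphism of sites $\varepsilon\colon S_\letale\to \ul{S}_\et$ and Nakayama's description of $R\varepsilon_*$. Via Theorem \ref{dim.4}(2) and the reduction above, $R\varepsilon_*$ commutes with $p_*$, because $S\times\A^1\to S$ is strict, so the log structure is just pulled back. Concretely, there is a base change for $\varepsilon$ along the strict smooth map $p$. Then
\[
R\Gamma_\letale(S\times\A^1,p^*\cF)\simeq R\Gamma_\et(\ul{S}\times\A^1,\ul{p}^*R\varepsilon_*\cF).
\]
Classical $\A^1$-invariance \cite[Theorem 1.3.2]{CDetale} then gives $R\Gamma_\et(\ul{S},R\varepsilon_*\cF)=R\Gamma_\letale(S,\cF)$.

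\textbf{Main obstacle.} The crux is the compatibility $\ul{p}^*R\varepsilon_*\simeq R\varepsilon_*p^*$ for the strict smooth projection. I would prove it stalkwise at geometric points of $\ul{S}\times\A^1$, using Kato--Nakayama's computation of $R\varepsilon_*$ stalks as cohomology of the profinite group $\Hom(\ol{\cM}^\gp,\hat{\Z}'(1))$ on log geometric points. For a strict morphism these groups are identified, so the comparison holds stalkwise.
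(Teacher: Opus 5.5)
Your argument is essentially the paper's proof: reduce via $\hom(\Lambda_U,-)$ to $R\Gamma_\letale(S,\cF)\simeq R\Gamma_\letale(S\times\A^1,p^*\cF)$, push forward along $r\colon S\to\ul{S}$ (your $R\varepsilon_*$), use the base change $r'_*p^*\simeq \ul{p}^*r_*$, and conclude with \cite[Theorem 1.3.2]{CDetale}. The step you call the main obstacle is already Theorem \ref{dim.4}: the paper uses part (1) because $r$ is proper, and your part (2) with $g=\ul{p}$ works equally well. So the stalkwise $\pi_1^{\log}$ argument, which as sketched would only cover Kummer \'etale sheaves and not arbitrary objects of $\rD_\letale(S,\Lambda)$, is unnecessary; so are the strictly-local, degree-$0$ and proper-generator reductions, and in the last of these $v^*p_*\simeq p'_*v'^*$ is in any case not an instance of Theorem \ref{dim.4}.
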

\begin{proof}
It suffices to show
\[
\hom_{\rD_\letale(S,\Lambda)}(\Lambda_U,\cF)
\simeq
\hom_{\rD_\letale(S\times \A^1,\Lambda)}(p^*\Lambda_U,p^*\cF)
\]
for all $U\in S_{\letale}$ and $\cF\in \rD_{\letale}(S)$.
Replacing $S$ by $U$,
it suffices to show
\[
R\Gamma_{\letale}(S,\cF)
\simeq
R\Gamma_{\letale}(S\times \A^1,p^*\cF).
\]
This is equivalent to
\[
R\Gamma_{\letale}(\ul{S},r_* \cF)
\simeq
R\Gamma_{\letale}(\ul{S}\times \A^1,r_*'p^*\cF),
\]
where $r\colon S\to \ul{S}$ and $r'\colon S\times \A^1\to \ul{S}\times \A^1$ are the morphisms removing the log structures.
Since $r$ is proper,
Theorem \ref{dim.4}(1) yields
\[
R\Gamma_{\letale}(\ul{S}\times \A^1,r_*'p^*\cF)
\simeq
R\Gamma_{\letale}(\ul{S}\times \A^1,\ul{p}^* r_* \cF),
\]
so it suffices to show
\[
R\Gamma_{\letale}(\ul{S},\cG)
\simeq
R\Gamma_{\letale}(\ul{S}\times \A^1,\cG\vert_{\ul{S}\times \A^1})
\]
with $\cG:=r_*\cF$.
The log \'etale cohomology agrees with the usual \'etale cohomology for schemes with trivial log structure.
Hence we conclude by \cite[Theorem 1.3.2]{CDetale}.
\end{proof}

We will use the notation $\square:=(\P^1,\infty)$,
whose underlying scheme is $\P^1$ with the compactifying log structure associated with $\A^1\to \P^1$.
The following result generalizes \cite[Theorem 9.1.5]{logSH}.

\begin{thm}
\label{A1.2}
Let $S$ be a noetherian fs log scheme,
let $\Lambda$ be a torsion commutative ring,
and let $\cF\in \rD_{\letale}(S,\Lambda)$.
Then the functor
\[
p^*\colon \rD_\letale(S,\Lambda)
\to
\rD_\letale(S\times \square,\Lambda)
\]
is fully faithful,
where $p\colon S\times \square\to S$ is the projection.
\end{thm}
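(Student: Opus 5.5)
The plan follows the proof of Theorem \ref{A1.1}, but replaces the input from \cite[Theorem 1.3.2]{CDetale} with a computation on $\square$ via the proper morphism $\square\to \P^1$. Note that $\Lambda$ is only assumed torsion here, and $p$ is proper. So the natural route is the projection formula with proper base change: show that $\cF\to p_*p^*\cF$ is an isomorphism.

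\textbf{Step 1: reduction to $p_*\Lambda$.} Since $p$ is proper, Theorem \ref{dim.4}(1) lets us check $\cF\simeq p_*p^*\cF$ after pulling back along strict henselizations $g\colon S'\to S$ at geometric points. Using Proposition \ref{dim.3}, the functor $p_*$ preserves colimits, so both sides commute with colimits in $\cF$. We can therefore reduce to $\cF=\Lambda_U$ for $U\in S_\letale$, and by Theorem \ref{dim.4}(1) again (base change along $U\to S$, or working on $U$) to showing
\[
R\Gamma_\letale(S,\Lambda)\xrightarrow{\simeq} R\Gamma_\letale(S\times\square,\Lambda)
\]
for every noetherian $S$. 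The same reduction by colimits and shifts, together with degree-$0$ truncation, also handles general $\cF$, via the analogous statement with coefficients $\cF$ pulled back.

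\textbf{Step 2: remove the log structure on $S$.} As in Theorem \ref{A1.1}, use $r\colon S\to\ul{S}$, which is proper, and Theorem \ref{dim.4}(1) for the cartesian square with $S\times\square\to \ul{S}\times\square$. This reduces the claim to
\[
R\Gamma(\ul{S},\cG)\simeq R\Gamma_\letale(\ul{S}\times\square,\cG|)
\]
for $\cG=r_*\cF$, with $\ul{S}$ now carrying the trivial log structure.

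\textbf{Step 3: the fiber computation (main obstacle).} Let $q\colon \ul{S}\times\square\to \ul{S}\times\P^1$ remove the log structure at $\infty$. We need $\ul{p}_*q_*q^*\ul{p}^*\cG\simeq\cG$. By Proposition \ref{dim.7} and the computation of $q_*\Lambda$ (the Kummer log structure along the divisor $\infty$), there is a fiber sequence
\[
\Lambda\to q_*\Lambda\to i_*(\text{contribution on }\ul{S}\times\infty).
\]
Rather than compute this explicitly, I would reduce, via proper base change to geometric points $\ol{s}$ of $\ul{S}$, to $\square$ over a separably closed field $k$, where $\ell$ need not be invertible.

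Over $k$, I would compare with $\A^1=\square-\{\infty\}$ via the fiber sequence of Proposition \ref{dim.7} applied to $j\colon \A^1\to\square$ and $i\colon \infty\to\square$. The term $i^*q_*\Lambda$ at the log point $\infty$ is the cohomology of the log point, which is computed by Kummer covers, i.e.\ $\mu_n$-cohomology for the prime-to-$p$ part and trivial for the $p$-part after log \'etale (Artin--Schreier-type) considerations. Since log \'etale cohomology of $\square$ over $k$ equals the cohomology of the Kato--Nakayama compactification, I would instead argue directly: by log blow-up and Kummer covering invariance (Fujiwara--Kato), $H^*_\letale(\square,\Lambda)$ is the tame cohomology of $\A^1_k$. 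This vanishes in positive degrees, since tame covers of $\A^1$ are trivial and the tame fundamental group of $\A^1_k$ is trivial. This identification is the key point.

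Justifying that the log \'etale cohomology of $\square_k$ equals the tame \'etale cohomology of $\A^1_k$, including $p$-torsion coefficients, is where I expect the real work. I would try to deduce it from Nakayama's comparison in \cite{Nak2} of log \'etale cohomology with tame/Kummer cohomology for the compactifying log structure, together with the purity-type result in \cite[Theorem 7.2]{Nak2}. This is consistent with the statement generalizing \cite[Theorem 9.1.5]{logSH}, where the analogous computation was done.
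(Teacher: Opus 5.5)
Your reductions are sound and essentially match the paper's. You remove the log structure of $S$ via $r\colon S\to\ul{S}$ and Theorem~\ref{dim.4}(1) as in Theorem~\ref{A1.1}, and then check $\cF\to p_*p^*\cF$ on stalks. You base change directly along geometric points $\ol{s}\to\ul{S}$, whereas the paper goes through Proposition~\ref{dim.2}(2), \cite[Proposition 3.24]{Ayo14} and Theorem~\ref{dim.4}(1); either works. Proposition~\ref{dim.3} then reduces everything to a constant sheaf over a separably closed field $k$. Your Step~1 reduction to constant coefficients is confused as written. If you generate $\cF$ by the $\Lambda_U$ and then test against $\Lambda_V$, you get cohomology of $V\times\square$ with coefficients the pullback of $\Lambda_U$, not $\Lambda$. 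This is harmless, though, because the stalkwise reduction makes the coefficients constant anyway.

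The genuine gap is the last step, which is the only one with real content: showing $R\Gamma_\letale(\square_k,\Lambda)\simeq\Lambda$ for $k$ separably closed and $\Lambda$ an arbitrary torsion ring. You leave it open, and your sketch does not prove it.
\begin{enumerate}
\item[(a)] Triviality of the tame fundamental group of $\A^1_k$ gives only $H^1=0$. It says nothing about $H^2$, and the vanishing of $H^2$ genuinely uses the log structure at $\infty$: for $\ell$ invertible in $k$ one has $H^2_\et(\P^1_k,\Z/\ell)\neq 0$.
\item[(b)] The Kato--Nakayama space exists only over $\mathbb{C}$.
\item[(c)] An identification with tame cohomology is neither established nor needed.
\end{enumerate}

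The paper closes this step by passing to Kummer \'etale cohomology via Proposition~\ref{dim.5} and citing \cite[Theorem 9.1.5]{logSH}, which is exactly $\square$-invariance over a field. Alternatively, you can compute directly with $\epsilon\colon(\square_k)_\ket\to(\P^1_k)_\et$. A general torsion $\Lambda$ is a product of the following two cases.
\begin{enumerate}
\item[(i)] If $\Lambda$ is killed by an integer invertible in $k$, Fujiwara--Kato log purity \cite[Theorem 7.4]{Ill} gives $R\Gamma_\ket(\square_k,\Lambda)\simeq R\Gamma_\et(\A^1_k,\Lambda)\simeq\Lambda$.
\item[(ii)] If $\Lambda$ is killed by a power of $p=\mathrm{char}\,k$, the stalk of $R^i\epsilon_*\Lambda$ at $\infty$ is the cohomology of the pro-prime-to-$p$ group $\prod_{\ell\neq p}\Z_\ell(1)$, so it vanishes for $i>0$. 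Hence $R\Gamma_\ket(\square_k,\Lambda)\simeq R\Gamma_\et(\P^1_k,\Lambda)\simeq\Lambda$ by Artin--Schreier and d\'evissage.
\end{enumerate}
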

\begin{proof}
As in the proof of Theorem \ref{A1.1},
we reduce to showing
\[
R\Gamma_\letale(S,\cF)\simeq R\Gamma_\letale(S\times \square,\cF\vert_{S\times \square})
\]
under the assumption that $S$ has the trivial log structure.
For this,
it suffices to show
\[
\cF\simeq p_*p^*\cF.
\]
By Proposition \ref{dim.2}(2), we only need to consider the case where $S$ is strictly local.
Use \cite[Proposition 3.24]{Ayo14} and Theorem \ref{dim.4}(1) to reduce to the case where $S$ is the spectrum of a field.
Use Proposition \ref{dim.2}(2) again to further reduce to the case where $S$ is the spectrum of a separably closed field.
Proposition \ref{dim.3} allows us to reduce to the case where $\cF$ is concentrated in degree $0$.
Then $\cF$ is a constant sheaf.
We finish the proof by Proposition \ref{dim.5} and \cite[Theorem 9.1.5]{logSH}.
\end{proof}

\section{Invariance under virtual isomorphisms}
\label{6}

Recall from \cite[Definition 3.2.1]{logsix} that a morphism of fs log schemes $f\colon X\to S$ is called a \emph{virtual isomorphism} if $\ul{f}$ is an isomorphism and $\ol{\cM}_{S}^\gp \simeq \ol{\cM}_{X}^\gp$.

\begin{thm}
\label{virtual.1}
Let $f\colon X\to S$ be a virtual isomorphism of noetherian fs log schemes,
and let $\cF\in \rD_{\ket}(S,\Lambda)$.
Then
\[
R\Gamma_\ket(S,\cF)
\simeq
R\Gamma_\ket(X,\cF\vert_X).
\]
\end{thm}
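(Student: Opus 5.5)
The plan is to reduce to a stalkwise statement about Kummer \'etale cohomology of log points, and then compare the two log points directly.

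First, the question can be localized. Both sides are $R\Gamma$ over $S$, so it suffices to show that $\cF\to f_*f^*\cF$ is an isomorphism in $\rD_\ket(S,\Lambda)$. A virtual isomorphism is proper, since $\ul f$ is an isomorphism. Theorem~\ref{dim.4}(1) therefore lets me base change along strict henselizations; strict henselizations are limits of strict \'etale maps, so Proposition~\ref{dim.2}(2) also applies. The statement is stable under such base change, because virtual isomorphisms are stable under strict base change. I may thus assume $\ul S$ is strictly local and check the statement on global sections of every $U\in S_\ket$. Since $X\times_S U\to U$ is again a virtual isomorphism, I only need $R\Gamma_\ket(S,\cF)\simeq R\Gamma_\ket(X,f^*\cF)$ with $\ul S$ strictly local.

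By Proposition~\ref{dim.1}, both $S$ and $X$ then have finite Kummer \'etale cohomological dimension. Proposition~\ref{dim.6} shows both sides commute with colimits in $\cF$, so I reduce to $\cF$ a sheaf in degree $0$. I would rather not work with arbitrary sheaves, so I instead invoke Proposition~\ref{logv.1}: $\rD_\ket(S,\Lambda)$ is generated under colimits and shifts by the objects $v_*\Lambda_V$ with $v\colon V\to S$ finite Kummer. Proper base change (Theorem~\ref{dim.4}(1)) gives $f^*v_*\Lambda_V\simeq v'_*\Lambda_{V\times_SX}$, and $V\times_SX\to V$ is again a virtual isomorphism. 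After this substitution, I only need the case $\cF=\Lambda$ over arbitrary noetherian $S$, i.e.\ $R\Gamma_\ket(S,\Lambda)\simeq R\Gamma_\ket(X,\Lambda)$. This reduction implicitly uses that $f^*$ preserves colimits and that $R\Gamma$ commutes with them, which holds after the strictly local reduction above.

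For constant coefficients, I reduce once more to stalks: I compare $r_*\Lambda$ for $r\colon S\to\ul S$ and for $X$. Using Theorem~\ref{dim.4}(1) applied to the strict closed point, I reduce to $\ul S$ being the spectrum of a separably closed field, so $S$ is a log point with monoid $\ol{\cM}_S=P$ and $X$ is a log point with monoid $Q$, where $P\to Q$ induces $P^\gp\simeq Q^\gp$. Nakayama's computation of Kummer \'etale cohomology of a log point with separably closed residue field identifies $H^i_\ket$ with $\Lambda$-valued continuous group cohomology of $\Hom(P^\gp,\hat\Z'(1))$, i.e.\ $\bigwedge^i\Hom(P^\gp,\Lambda(-1))$. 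This depends only on $P^\gp$, and the map induced by $f$ is the one induced by $P^\gp\simeq Q^\gp$, which is an isomorphism. The Kummer covers $\A_{P^{1/n}}$ and $\A_{Q^{1/n}}$ of the two points have the same Galois group $\Hom(P^\gp,\mu_n)$. Here the assumption that $\Lambda$ is killed by an integer invertible on $S$ is used; if it is not available in this theorem, the $p$-part vanishes anyway, since the $p$-primary Kummer covers are purely inseparable.

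The main obstacle is the middle reduction. I need the generators of Proposition~\ref{logv.1}, which requires a neat chart, and I need to make sure this is compatible with the strict localization, hence the repeated appeals to Propositions~\ref{dim.2} and~\ref{dim.3}. The log point computation itself is standard.
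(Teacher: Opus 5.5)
Your last step (push forward to $\ul S$, use proper base change to reach a log point over a separably closed field, compare via $P^\gp\simeq Q^\gp$) is essentially the paper's argument. The middle reduction to $\cF=\Lambda$, however, has a genuine gap.

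Twice you use that a base change of $f$ is again a virtual isomorphism: first $X\times_S U\to U$ for $U\in S_\ket$, and then $V\times_S X\to V$ for finite Kummer $v\colon V\to S$. Virtual isomorphisms are stable under strict base change. They are not stable under fs base change along non-strict maps, because saturation changes the underlying scheme. Here is a counterexample.
\begin{itemize}
\item Let $k$ be separably closed of characteristic $\neq 2$, and let $S$ be the log point $\Spec k$ with chart $P=\N^2$.
\item Let $X$ be the log point with chart $Q=\{(a,b)\in\Z^2 : a\ge 0,\ a+b\ge 0\}\supset P$. Then $f\colon X\to S$ is a virtual isomorphism.
\item Take the finite Kummer \'etale $V:=S\times_{\A_P}\A_{\frac12 P}$, so $\ul V=\Spec k[s,t]/(s^2,t^2)$.
\item The fs pushout of $Q\leftarrow P\to \frac12 P$ is $\frac12 Q$. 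Hence $\ul{V\times_S X}=\Spec k[\tfrac12 Q]/(Q^+)\simeq\Spec k[\sigma,\tau]/(\sigma^2,\tau^2)$, with $\sigma=x^{(1/2,-1/2)}$ and $\tau=x^{(0,1/2)}$.
\item The induced map is $s\mapsto\sigma\tau$, $t\mapsto\tau$. It kills $st\neq 0$, so $\ul{f'}$ is not an isomorphism.
\end{itemize}
So after your substitution $\cF=v_*\Lambda_V$, you must compare $R\Gamma_\ket(V,\Lambda)$ and $R\Gamma_\ket(V\times_S X,\Lambda)$ for a morphism outside the class of the theorem. Your log-point computation does not cover it. A repair would need topological invariance of the Kummer \'etale site under strict nil-immersions, plus an analysis of such fibers, and you supply neither.

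The same problem already occurs in your first paragraph. Aiming at the stronger statement $\cF\simeq f_*f^*\cF$ forces you to test on all $U\in S_\ket$. That is exactly where non-strict base change enters, and such $U$ need not be strictly local either.

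The fix is to drop the middle reduction entirely, as the paper does.
\begin{itemize}
\item Let $g\colon S\to\ul S$ be the map removing the log structure. Since $R\Gamma_\ket(S,-)\simeq R\Gamma_\et(\ul S,g_*-)$, it suffices to show $g_*\cF\simeq g_*f_*f^*\cF$.
\item Both $g$ and $gf$ are proper. So stalks at geometric points of $\ul S$ are computed by proper base change along strict maps only, where virtual isomorphisms are stable.
\item After reducing to sheaves via colimit preservation (Proposition \ref{dim.3}), you land on a log point over a separably closed field.
\item There, by \cite[4.6]{Ill}, Kummer \'etale sheaves are continuous $\pi_1^{\log}$-sets.
\item $f$ induces $\pi_1^{\log}(X)\simeq\pi_1^{\log}(S)$, since both are $\Hom(P^\gp,\prod_{\ell\neq p}\Z_\ell(1))$ and $P^\gp\simeq Q^\gp$.
\end{itemize}
This handles arbitrary $\cF$ directly, so the generators of Proposition \ref{logv.1} are not needed.

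A minor point: the $p$-part vanishes not because $p$-primary Kummer covers are purely inseparable; they are not Kummer \'etale at all. It vanishes because $\pi_1^{\log}$ is pro-prime-to-$p$.
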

\begin{proof}
Let $g\colon S\to \ul{S}$ be the morphism removing the log structure.
It suffices to show
\[
g_* \simeq g_*f_*f^*.
\]
Note that $g$ and $gf$ are proper.
As in the proof of Theorem \ref{A1.2},
we reduce to the case where $\ul{S}$ is the spectrum of a separably closed field of characteristic $p$.

Then all Kummer \'etale coverings of $S$ and $X$ are finite.
Hence by \cite[4.6]{Ill},
it suffices to show
\[
\pi_1^{\log}(S,\widetilde{s})
\simeq
\pi_1^{\log}(X,\widetilde{x}),
\]
where $\widetilde{s}$ and $\widetilde{x}$ are log geometric points of $S$ and $X$.
This is equivalent to
\[
\Hom(P^\gp,\prod_{\ell\neq p} \Z_\ell(1))
\simeq
\Hom(Q^\gp,\prod_{\ell\neq p} \Z_\ell(1))
\]
due to the computation in \cite[Example 4.7(a)]{Ill},
where $P:=\ol{\cM}_S(S)$ and $Q:=\ol{\cM}_X(X)$.
This is clear since $P^\gp\to Q^\gp$ is an isomorphism by the assumption that $f$ is a virtual isomorphism.
\end{proof}

\section{Log v-descent}
\label{7}

Let us review the log v-topology \cite{logv} introduced by Opdan, Park, and {\O}stv{\ae}r.
A \emph{log valuation ring} $(V,P)$ is a valuation ring $V$ with a valuative monoid $P$ and a logarithmic map $P\to V$.
A morphism of qcqs fs log schemes $f\colon X\to S$ is called a \emph{log v-cover} if for every log valuation ring $(V,P)$ with a morphism $\Spec(V,P)\to S$,
there exists a map of log valuation rings $(V,P)\to (W,Q)$ with $V\to W$ injective and local and a commutative square
\[
\begin{tikzcd}
\Spec(W,Q)\ar[d]\ar[r]&
X\ar[d,"f"]&
\\
\Spec(V,P)\ar[r]&
S.
\end{tikzcd}
\]
The \emph{log v-topology} is the topology on the category of qcqs fs log schemes generated by the finite covering families $\{U_i\to X\}_{i\in I}$ such that $\amalg_{i\in I} U_i\to X$ is a log v-cover.

Note that the presheaf
\[
X\mapsto R\Gamma_\letale(X,\Lambda_X)
\]
on the category of qcqs fs log schemes satisfies log v-descent by \cite[Theorem 4.10]{logv} for every torsion commutative ring $\Lambda$.

By \cite[Proposition 3.24]{logv},
the log v-topology is finer than the log fppf-topology,
which is the topology on the category of fs log schemes generated by finite covering families  $\{U_i\to X\}_{i\in I}$ such that $\amalg_{i\in I} U_i\to X$ is a log fppf-cover in the sense that it is universally surjective, finitely presented, and log flat.
Working log fppf-locally is helpful due to the following result:

\begin{prop}
\label{logv.11}
Let $f\colon X\to S$ be a morphism of qcqs fs log schemes.
Then there exists a log fppf-cover $S'\to S$ such that the pullback $X\times_S S'\to S'$ is saturated.
\end{prop}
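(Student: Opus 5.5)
The plan is to mimic the proof of Proposition \ref{logv.10}: use charts to reduce to a toric situation, and then make the morphism saturated by a Kummer base change along multiplication by $n$. The difference is that $n$ need not be invertible here, and this is harmless because such a base change is still log flat and surjective, hence a log fppf-cover.

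The first step is to localize. The question is local on $S$ for the log fppf-topology, and so in particular local for strict étale and Zariski coverings. Since $S$ is quasi-compact, we may therefore assume that $S$ admits a global chart $P$. Log fppf-covers are stable under base change and composition, so the covers produced in the following steps may be composed. We will also use that saturatedness of $X\times_S S'\to S'$ can be checked locally on $X$. Since $X$ is quasi-compact, we may cover $X$ by finitely many opens $X_1,\dots,X_r$ on which $f$ admits a chart $\theta_k\colon P\to Q_k$. If we find, for each $k$, a log fppf-cover $S_k\to S$ that makes $X_k\times_S S_k\to S_k$ saturated, then the fiber product $S'$ of all the $S_k$ over $S$ works for all $k$ at once, because saturatedness is stable under base change. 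So we may assume that $f$ has a chart $\theta\colon P\to Q$.

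The second step makes the morphism integral. By \cite[Theorem III.2.6.7]{Ogu}, there is a log blow-up $\T_\Sigma\to \A_P$ after which the pullback of $\A_Q\to\A_P$ becomes integral, and indeed $\Q$-integral, on each cone. However, a log blow-up is proper and surjective but not log flat, so it is not a log fppf-cover. I therefore need a different route to integrality. The first thing I would try is to appeal directly to the relevant theorem in \cite{logv} or in Kato's work, which states that every morphism becomes integral after a log flat, finitely presented base change: locally, one replaces $P$ by the saturation of an fs monoid $P'$ such that $P\to P'$ is Kummer and $P'\to Q\oplus_P P'$ is integral.

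If no such citation is available, I would fall back on the following. Combine the log blow-up with the fact, from \cite[Proposition 3.24]{logv} and the log v-topology, that log blow-ups are log v-covers. Note that the statement only requires a log fppf-cover, so this fallback would fall short. This integrality step is the \textbf{main obstacle}.

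The third step assumes that $\theta$ is integral, so that $f$ is integral. By \cite[Theorem I.4.9.1]{Ogu}, there is an integer $n>0$ such that, with $P':=P$ and $P\to P'$ given by multiplication by $n$, the pushout $Q\oplus_P P'$ saturates to a monoid over which $P'$ is saturated. Hence the pullback along $S':=S\times_{\A_P}\A_{P'}\to S$ is saturated. The map $\A_{P'}\to\A_P$ is finite, Kummer, log flat and surjective, and all of these properties persist under base change, so $S'\to S$ is a log fppf-cover. Combining the three steps gives the claim.
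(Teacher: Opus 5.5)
Your Steps 1 and 3 are essentially the paper's argument, but Step 2 has a genuine gap, and it comes from a false claim: a log blow-up \emph{is} a log fppf-cover. You are conflating flatness of the underlying morphism of schemes with log flatness. A log blow-up of $S$ (with chart $P$) is log \'etale. It is covered by strict open subschemes of $S\times_{\A_P}\A_{R}$ for fs monoids $R$ with $P\subset R\subset P^\gp$, so it has local charts $P\to R$ for which the induced map to $S\times_{\A_P}\A_{R}$ is an open immersion. Kato's criterion for log flatness only requires this induced map to be flat, not $\ul{f}$, so log \'etale (indeed log smooth) morphisms are log flat. A log blow-up is also finitely presented. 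Its fs base change along any morphism is again a log blow-up, hence surjective, so it is universally surjective. Thus every log blow-up is a log \'etale cover, and in particular a log fppf-cover. This is exactly how the paper argues: it invokes \cite[Theorem III.2.6.7]{Ogu} to reduce to the $\Q$-integral case ``since any log blow-up is a log \'etale cover.'' It then applies \cite[Theorem I.4.9.1]{Ogu} to that $\Q$-integral morphism, together with the finite Kummer cover $\A_{P'}\to\A_P$ as in your Step 3, so you do not need honest integrality there.

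Your proposed workaround would not have rescued the step, because the ``relevant theorem'' you describe, where integrality is achieved by a Kummer extension $P\to P'$, is false. For example, let $f\colon X\to \A_{\N^2}$ be the blow-up of the origin, i.e.\ the log blow-up along the ideal generated by $e_1,e_2$. Its fs base change along the Kummer morphism $\A_{\N^2}\to\A_{\N^2}$ induced by multiplication by $n$ is the log blow-up along the ideal generated by $ne_1,ne_2$. After saturation this gives the same subdivision of the fan, hence again the blow-up of the origin. That morphism is log smooth but not flat on underlying schemes, so by Kato's theorem (integral and log smooth implies flat) it is not integral for any $n$. So a modification of the base is unavoidable for integrality. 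The general statement ``every morphism becomes integral after a log flat, finitely presented base change'' is true precisely because log blow-ups are allowed. Replacing your Step 2 with the observation in the previous paragraph makes your proof complete, and it then coincides essentially with the paper's.
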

\begin{proof}
By \cite[Theorem III.2.6.7]{Ogu},
we reduce to the case where $f$ is $\Q$-integral since any log blow-up is a log \'etale cover.
We may also assume that $S$ admits a chart $P$.
Use \cite[Theorem I.4.9.1]{Ogu} to find an integer $n>0$ such that the pullback $X\times_{\A_P}\A_{P'}\to S\times_{\A_P} \A_{P'}$ is saturated, where $P\to P=:P'$ is the multiplication by $n$.
To conclude,
observe that the induced morphism $\A_{P'}\to \A_P$ is a log fppf-cover.
\end{proof}

Recall from \cite[Definition 2.1]{regGysin} that the \emph{log cdh-topology} on the category of qcqs fs log schemes is the finest topology such that it is finer than the strict Nisnevich topology and $Z\amalg X\to S$ is a covering for every cartesian square
\begin{equation}
\label{logv.7.1}
\begin{tikzcd}
W\ar[d]\ar[r]&
X\ar[d,"f"]
\\
Z\ar[r,"i"]&
S
\end{tikzcd}
\end{equation}
such that $i$ is a strict closed immersion, $f$ is proper, and $f^{-1}(S-Z)\xrightarrow{\simeq} S-Z$.

By \cite[Theorem 3.23, Proposition 3.25]{logv},
the log v-topology is finer than the log cdh-topology.

\begin{thm}
\label{logv.7}
Let $S$ be a noetherian fs log scheme,
and let $\Lambda$ be a torsion commutative ring killed by an integer invertible on $S$.
Consider a square of the form \eqref{logv.7.1}.
Then the induced square
\[
\begin{tikzcd}
\id\ar[d]\ar[r]&
i_*i^*\ar[d]
\\
f_*f^*\ar[r]&
g_*g^*
\end{tikzcd}
\colon
\rD_\letale(S,\Lambda)
\to
\rD_\letale(S,\Lambda)
\]
is cartesian,
where $g\colon W\to S$ is the composite morphism.
The analogous statement holds in the Kummer \'etale case too.
\end{thm}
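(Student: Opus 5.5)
We prove that the square is cartesian by checking it on stalks after reducing to bounded-below (in fact degree-$0$) objects. All four functors $\id$, $i_*i^*$, $f_*f^*$, $g_*g^*$ preserve colimits: the pullbacks do trivially, and the pushforwards do by Proposition \ref{dim.3}, since $f$, $g$, $i$ are of finite type. The subcategory of objects on which the square is cartesian is therefore closed under colimits and shifts. It thus suffices to treat $\cF$ concentrated in degree $0$, or even the generators.

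Next we reduce to a strictly local base. By Proposition \ref{dim.2}(2) and Theorem \ref{dim.4}(1) (proper base change for $f$, $i$, $g$), the formation of the square commutes with pullback along a strict henselization $S'\to S$ at a geometric point. Pullbacks along all strict henselizations form a conservative family. So we may assume $\ul{S}$ is strictly local.

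Now apply the localization sequence, Proposition \ref{dim.7}. The square is cartesian if and only if it becomes cartesian after applying $i^*$ and $j^*$, where $j\colon S-Z\to S$ is the open complement.
\begin{itemize}
\item After $j^*$: by proper base change, $j^*i_*=0$ and $j^*g_*=0$, since $W$ lies over $Z$. Also $j^*f_*f^*\simeq j^*$ because $f^{-1}(S-Z)\simeq S-Z$. The square becomes $\id\to 0$, $\id\to 0$, which is cartesian.
\item After $i^*$: proper base change gives $i^*f_*f^*\simeq h_*h^*i^*$, where $h\colon W\to Z$. The square becomes
\[
\cG\to \cG,\qquad h_*h^*\cG\to h_*h^*\cG
\]
with $\cG=i^*\cF$, using $i^*i_*\simeq \id$ from Proposition \ref{dim.7}. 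This is trivially cartesian.
\end{itemize}
This argument is essentially formal from localization plus proper base change, so the reductions in the preceding paragraph are not even needed. We only need $i^*i_*\simeq\id$ and the vanishing $j^*i_*=0$, both consequences of Proposition \ref{dim.7}.

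The main point to verify is therefore the input: that Proposition \ref{dim.7} gives joint conservativity of $(i^*,j^*)$ and full faithfulness of $i_*$ in the unbounded setting. Proposition \ref{dim.7} does give both, since $j_\sharp j^*\to\id\to i_*i^*$ is a fiber sequence. We also need that Theorem \ref{dim.4}(1) applies to the proper maps $f$ and $g$ and to the base changes along $i$ and $j$.

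For the Kummer étale case, proper base change via Theorem \ref{dim.4} requires $f$ or the base-change map to be exact. The maps $i$ and $j$ are strict, hence exact, so the same argument goes through verbatim. The hypothesis that $\Lambda$ is killed by an integer invertible on $S$ is not used beyond these inputs.
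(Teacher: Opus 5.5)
Your proof is correct, but it takes a different route from the paper.

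\textbf{The paper's route.} It reduces to the statement that $R\Gamma_\letale(-,\Lambda)$ sends the square \eqref{logv.7.1} to a cartesian square, and then gets this from log cdh-descent, which it deduces from log v-descent \cite[Theorem 4.10]{logv}. The reduction uses three ingredients:
\begin{itemize}
\item generation by $v_*\Lambda_V$ for proper $v$ (Proposition \ref{logv.1}), which requires a neat chart and the log \'etale Poincar\'e duality of \S\ref{4};
\item proper base change;
\item base change along log \'etale $U\to S$ (Theorem \ref{dim.4}(2)), which is where the invertibility hypothesis on $\Lambda$ enters.
\end{itemize}

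\textbf{Your route.} You give the classical proper-descent argument for abstract blow-ups. By Proposition \ref{dim.7} the pair $(i^*,j^*)$ is jointly conservative. After applying $j^*$ or $i^*$, the square degenerates by Theorem \ref{dim.4}(1), applied to $f$, $i$ and $i'\colon W\to X$ along the strict (hence exact) maps $i$ and $j$. This also covers the Kummer \'etale case verbatim. Your approach is considerably lighter:
\begin{itemize}
\item it uses only \S\ref{unbounded} and needs no charts, no Section \ref{4} and no log v-topology;
\item it never uses that the torsion order of $\Lambda$ is invertible on $S$, so it proves the statement for every torsion $\Lambda$.
\end{itemize}
The paper's route mainly buys uniformity with the log v-descent machinery it reuses in Theorem \ref{ver.1}; for this particular statement it is heavier than necessary.

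\textbf{Points to tidy.}
\begin{itemize}
\item Your first two paragraphs of reductions are superfluous, as you note.
\item $i^*i_*\simeq\id$ and $j^*i_*\simeq 0$ should be obtained from Theorem \ref{dim.4}(1) applied to $i$, using $Z\times_S Z\simeq Z$ and $Z\times_S(S-Z)=\emptyset$. The fiber sequence of Proposition \ref{dim.7} alone does not immediately give $i^*i_*\simeq\id$.
\item You call the square after $i^*$ trivially cartesian, but you should check that the bottom arrow $h_*h^*\cG\to h_*h^*\cG$ is an isomorphism. Under proper base change it is induced by the unit $i'^*\to i'^*i'_*i'^*$, which is invertible because $i'^*i'_*\simeq\id$ (again by Theorem \ref{dim.4}(1)).
\item The $j^*$ step uses that $f^{-1}(S-Z)\to S-Z$ is an isomorphism of fs log schemes, not merely of underlying schemes. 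This is how \eqref{logv.7.1} is meant.
\end{itemize}
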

\begin{proof}
We focus on the log \'etale case since the proofs are similar.
We can work strict \'etale locally on $S$,
so we may assume that $S$ has a chart neat at a point of $S$.

By Propositions \ref{dim.3} and \ref{logv.1},
it suffices to show that
\[
\begin{tikzcd}
v_*\Lambda_V\ar[d]\ar[r]&
i_*i^*v_*\Lambda_V\ar[d]
\\
f_*f^*v_*\Lambda_V\ar[r]&
g_*g^*v_*\Lambda_V
\end{tikzcd}
\]
is cartesian for every proper morphism $V\to S$.
Replace $S$ by $V$ and use Theorem \ref{dim.4}(1) to reduce to the case where $V=S$.
Then it suffices to show that 
\[
\begin{tikzcd}
\hom_{\rD_\letale(S,\Lambda)}(\Lambda_U,\Lambda_S)\ar[d]\ar[r]&
\hom_{\rD_\letale(S,\Lambda)}(\Lambda_U,i_*i^*\Lambda_S)\ar[d]
\\
\hom_{\rD_\letale(S,\Lambda)}(\Lambda_U,f_*f^*\Lambda_S)\ar[r]&
\hom_{\rD_\letale(S,\Lambda)}(\Lambda_U,g_*g^*\Lambda_S)
\end{tikzcd}
\]
is cartesian for every $U\in S_\letale$.
Replace $S$ by $U$ and use Theorem \ref{dim.4} to reduce to the case where $U=S$.
As noted above,
the log v-topology is finer than the log cdh-topology.
Hence \cite[Remark 2.2]{regGysin} and \cite[Theorem 4.10]{logv} finish the proof.
\end{proof}

\section{Invariance under verticalization}
\label{8}

For any cartesian square of fs log schemes
\[
\begin{tikzcd}
X'\ar[d]\ar[r]&
X\ar[d]
\\
S'\ar[r]&
S,
\end{tikzcd}
\]
by \cite[Proposition 2.17]{logA1}, we have
\begin{equation}
\label{ver.0.1}
(X-\partial_S X)\times_S S'
\simeq
X'-\partial_{S'} X'.
\end{equation}

For a monoid $P$ and an ideal $I$ of $P$,
we will use the notation
\[
\A_{P,I}
:=
\Spec(P\mapsto \Z[\{x^p\}_{p\in P}]/(x^i)_{i\in I}),
\]
where the log structure homomorphism sends $p\in P$ to $x^p$.

Now, we prove the main technical result in this paper,
which uses many results from earlier sections.

\begin{thm}
\label{ver.1}
Let $f\colon X\to S$ be a log smooth morphism of noetherian fs log schemes,
and let $j\colon X-\partial_S X \to X$ be the obvious open immersion,
and let $S$ be a torsion commutative ring killed by an integer invertible on $S$.
Then the natural transformation
\[
f_*f^*
\xrightarrow{\ad}
f_*j_*j^*f^*
\colon
\rD_{\letale}(S,\Lambda)\to \rD_{\letale}(S,\Lambda)
\]
is an isomorphism.
\end{thm}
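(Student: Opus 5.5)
We want to show $f_*f^*\cF\simeq f_*j_*j^*f^*\cF$ for every $\cF\in\rD_\letale(S,\Lambda)$. The plan is to reduce this to a statement about cohomology of the constant sheaf and then use log cdh-descent to reach a toric local model.

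\textbf{Reductions.} Both sides commute with colimits in $\cF$ by Proposition \ref{dim.3}, since $f$ and $fj$ are of finite type. The question is also strict \'etale local on $S$: Proposition \ref{dim.2}(2) and \eqref{ver.0.1} show that the vertical locus is compatible with base change. So we may assume that $S$ has a chart neat at a point. By Proposition \ref{logv.1} it then suffices to treat $\cF=v_*\Lambda_V$ for proper $v\colon V\to S$. Proper base change (Theorem \ref{dim.4}(1)) together with \eqref{ver.0.1} replaces $S$ by $V$, reducing us to $\cF=\Lambda_S$.

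Taking $\hom(\Lambda_U,-)$ for $U\in S_\letale$, and using log smooth base change (Theorem \ref{dim.4}(2)) and \eqref{ver.0.1} again, we reduce to proving
\[
R\Gamma_\letale(X,\Lambda)\simeq R\Gamma_\letale(X-\partial_S X,\Lambda)
\]
for every log smooth $X\to S$ over an arbitrary noetherian $S$. This statement is local on $X$ in the strict \'etale topology, and also log \'etale locally on $X$ by log \'etale descent together with \eqref{ver.0.1}.

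\textbf{Toric model.} By Kato's structure theorem we may assume, log \'etale locally, that $X\to S\times_{\A_P}\A_Q$ is strict \'etale for some chart $P\to Q$. Strict \'etale maps preserve the vertical locus, and the model $S\times_{\A_P}\A_Q$ is a base change of $\A_Q\to\A_P$. We then want to reduce further to the case where $S$ is a point with log structure: a log geometric point, or $\A_{P,P^+}$ over a separably closed field.

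For this we use log cdh-descent (Theorem \ref{logv.7}) and induction on $\dim S$ via the stratification of $S$ by the rank of $\ol{\cM}_S$. A strict closed immersion cut out by a stratum, together with its open complement, reduces the problem to strata on which $\ol{\cM}_S$ is constant. Proposition \ref{dim.2}(2) and Theorem \ref{A1.1} can then be used to collapse the underlying scheme, leaving the case $\ul{S}=\Spec k$ with $k$ separably closed.

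In that case Theorem \ref{virtual.1} lets us change the log structure on $S$ up to virtual isomorphism, so we may take $S=\Spec(k)$ with the log structure $P\to k$, $P^+\mapsto 0$.

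\textbf{Main obstacle: the point case.} It remains to compare $R\Gamma_\letale(\A_Q\times_{\A_P}S,\Lambda)$ with the cohomology of its vertical part. Log blow-ups do not change log \'etale cohomology, so after subdividing the fan of $Q$ we may assume that $X$ is covered by pieces of the form $S\times\A^r\times\A_{\N}^{s}$. Here the vertical directions contribute $\A^r$ or $\mathbb{G}_m$-factors, and the non-vertical boundary contributes copies of $\A_\N$ over $S$. The key local computation is then that the inclusion $S\times\mathbb{G}_m\to S\times\A_\N$ induces an isomorphism on log \'etale cohomology. I expect to prove this via $\A^1$- and $\square$-invariance (Theorems \ref{A1.1}, \ref{A1.2}) combined with the localization sequence (Proposition \ref{dim.7}) for $\A_\N$ along the origin $S\times\A_{\N,\N^+}$. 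On the boundary stratum, the Kummer \'etale fundamental group computation from \cite[4.6]{Ill} should show that the boundary contribution is exactly cancelled.

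The main difficulty is making this local computation compatible with gluing along the subdivision. I would handle the gluing by Mayer--Vietoris for Zariski covers, which is a special case of Theorem \ref{logv.7}.
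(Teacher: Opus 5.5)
You have a genuine gap: everything after the toric model either lacks justification or is false. Your reductions to $R\Gamma_\letale(X,\Lambda)\simeq R\Gamma_\letale(X-\partial_SX,\Lambda)$, with $X\to S\times_{\A_P}\A_Q$ strict \'etale, do agree with the paper's Step~1 and the chart reduction in Step~2.

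\textbf{The reduction to a point is unjustified.} Nothing lets you shrink $\ul S$ to $\Spec k$. Passing to strata via Proposition~\ref{dim.7} forces you to prove the claim on each stratum for arbitrary coefficients $\cF$, since $i_*i^*\Lambda$ and $j_\sharp j^*\Lambda$ are not constant; yet your point case is stated only for $\Lambda$. More seriously, Proposition~\ref{dim.2}(2) only reaches a strictly local $\ul S$. Going on to the closed point would need base change for $f_*$ and $(fj)_*$ along a strict closed immersion. Theorem~\ref{dim.4} does not give this for non-proper $f$, and Theorem~\ref{A1.1} does not help. The paper never reduces to a point. It saturates $\theta$ log fppf-locally (Proposition~\ref{logv.11} plus log v-descent), stratifies only until $S\simeq\ul S\times\pt_P$ with $\ul S$ and $\cF$ arbitrary, and finishes with Lemma~\ref{ver.2}, which holds over any base.

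\textbf{The local model is wrong.} Subdividing the fan of $Q$ does not make $X$ locally of the form $S\times\A^r\times\A^s_\N$. Take $P=\N\to Q=\N^3$, $1\mapsto(1,1,0)$, i.e.\ $\{xy=t\}\times\A_\N$. On the chart of a smooth maximal cone with rays $v_1,v_2,v_3$ and dual basis $u_1,u_2,u_3$, $t$ goes to $\sum_i\langle(1,1,0),v_i\rangle u_i$. This is a basis vector only if two distinct primitive rays both equal $e_3$, which is impossible. A smooth subdivision does split each chart as a vertical part times $\A^s_\N$, but that exposes a problem your ``gluing'' step misses. Zariski Mayer--Vietoris is vacuous here, since the claim is Zariski local on $X$. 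The real issue is that log blow-ups do not preserve the vertical locus. For $X=S\times\A_\N$ over $S=\A_\N$ (so $t=y$), blowing up the origin gives the chart $x=u$, $y=uv$, which is vertical along the exceptional divisor $\{u=0\}$. So for a log blow-up $\pi\colon X'\to X$ you only have $\pi^{-1}(X-\partial_SX)\subsetneq X'-\partial_SX'$. Transferring the statement from $X'$ to $X$ then requires comparing the cohomology of two opens that differ by vertical strata, and your $\A_\N$ computation does not cover that.

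\textbf{What the paper does instead (Steps~4--5), without modifying $X$.}
\begin{enumerate}
\item For saturated $\theta$ over $\ul S\times\pt_P$, the fiber is covered by the strict closed subschemes $\A_{Q,Q-G_i}$, where $G_i$ runs over the maximal $\theta$-critical faces \cite[Theorem I.4.8.14]{Ogu}.
\item Log cdh-descent (Theorem~\ref{logv.7}) reduces the claim to the intersections $X_I$.
\item Each $X_I\neq Z$ is virtually isomorphic to a scheme strict \'etale over $\ul S\times\pt_{P'}\times\A_{G_I}$. That scheme is a base change from the trivially logged $\ul S$, so Theorem~\ref{virtual.1} and Lemma~\ref{ver.2} apply.
\item The remaining contributions of $Z=X\times_{\A_Q}\pt_Q$ cancel in the limit over $I$, because $\fib(R\Gamma_\sing(K\otimes\R)\to R\Gamma_\sing(K\otimes\R-0))=0$ for $K=\bigcup_iG_i$. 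Here $K$ is in bijection with $Q/P$, so $0$ lies on the boundary of $K\otimes\R$ when $\theta$ is not vertical.
\end{enumerate}
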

\begin{proof}
Step 1. \emph{Usual reduction.}
The claim is trivial if $j$ is an isomorphism, so assume that $j$ is not an isomorphism.
The question is strict \'etale local on $S$,
so we may assume that $S$ has a chart neat at a point of $S$.
Proposition \ref{dim.3} implies that $f_*$ and $f_*j_*$ preserve colimits.
Since $\rD_\letale(S,\Lambda)$ is generated under colimits and shifts by $v_*\Lambda_V$ for all proper $v\colon V\to S$, by Proposition \ref{logv.1},
it suffices to show
\[
f_*f^*v_*\Lambda_V\simeq f_*j_*j^*f^*v_*\Lambda_V.
\]
Replace $S$ by $V$ and use Theorem \ref{dim.4} and \eqref{ver.0.1} to reduce to the case where $V=S$.
Then it suffices to show
\[
\hom_{\rD_{\letale}(S,\Lambda)}(\Lambda_U,f_*f^*\Lambda_S)
\simeq
\hom_{\rD_{\letale}(S,\Lambda)}(\Lambda_U,f_*j_*j^*f^*\Lambda_S)
\]
for all $U\in S_\letale$.
This can be written as
\[
R\Gamma_\letale(U\times_S X,\Lambda_{U\times_S X})
\simeq
R\Gamma_\letale(U\times_S (X-\partial_S X),\Lambda_{U\times_S (X-\partial_S X)}).
\]
Replacing $S$ by $U$ and using \eqref{ver.0.1},
we reduce to showing
\[
R\Gamma_\letale(X,\Lambda_X)
\simeq
R\Gamma_\letale(X-\partial_S X,\Lambda_{X-\partial_S X}).
\]

Step 2. \emph{Log v-locality on $S$}.
We can work log v-locally on $S$ by \cite[Theorem 4.10]{logv} and \eqref{ver.0.1}.
By Proposition \ref{logv.11},
we may assume that $f$ is saturated.
We may also assume that $S$ has a chart $P$ neat at a point of $S$.
We can also work strict \'etale locally on $X$.
Together with \cite[Theorem IV.3.3.1]{Ogu},
we may assume that $f$ has a chart $\theta\colon P\to Q$ such that $Q$ is sharp, $\theta$ is saturated, and the induced morphism $X\to S\times_{\A_P} \A_Q$ is strict \'etale.

Let $\cF:=\Lambda_S\in \rD_\ket(S,\Lambda)$.
By Proposition \ref{dim.5},
it suffices to show
\[
R\Gamma_\ket(X,\cF\vert_X)
\simeq
R\Gamma_\ket(X-\partial_S X,\cF\vert_{X-\partial_S X}).
\]

Step 3. \emph{Stratification.}
If $\{g_i\colon S_i\to S\}_{i\in I}$ is a stratification, then Proposition \ref{dim.7} implies that $\Lambda_S$ is contained in the full subcategory of $\rD_\ket(S,\Lambda)$ generated under finite colimits by the essential images of $g_{i*}\colon \rD_\ket(S_i,\Lambda)\to \rD_\ket(S,\Lambda)$.
Applying Theorem \ref{dim.4}(2) to $f^*$ and $j^*f^*$,
it suffices to show the claim for each $S_i$ with arbitrary $\cF$ instead of $\Lambda_S$.
Hence
by \cite[Propositions 3.1.1, 3.1.4]{logsix},
it suffices to show
\[
R\Gamma_\ket(X,\cF\vert_X)
\simeq
R\Gamma_\ket(X-\partial_S X,\cF\vert_{X-\partial_S X})
\]
for every $\cF\in \rD_\ket(S,\Lambda)$
under the assumption $S\simeq \ul{S} \times \pt_P$.

Consider $Z:=X\times_{\A_Q} \pt_Q$,
which is contained in $\partial_S X$.
By induction on the rank of $Q^\gp$,
it suffices to show
\begin{equation}
\label{ver.1.1}
R\Gamma_\ket(X,\cF\vert_X)
\simeq
R\Gamma_\ket(X-Z,\cF\vert_{X-Z}).
\end{equation}

Step 4. \emph{A strict closed cover.}
Let $G_1,\ldots,G_r$ be the maximal $\theta$-critical faces of $Q$.
For any nonempty subset $I:=\{i_1,\ldots,i_s\}\subset \{1,\ldots,r\}$,
we set $G_I:=G_{i_1}\cap \cdots \cap G_{i_s}$.
By \cite[Theorem I.4.8.14(7)]{Ogu},
\[
\{ \A_{Q,Q-G_1},\ldots, \A_{Q,Q-G_r}\}
\]
is a strict closed cover of $\pt_P\times_{\A_P} \A_Q$.
We set $X_I:=X\times_{\A_Q} \A_{Q,Q-G_I}$.
Then by Theorem \ref{logv.7},
we have
\begin{gather}
\label{ver.1.2}
\lim_I R\Gamma_\ket(X_I,\cF\vert_{X_I})
\simeq
R\Gamma_\ket(X,\cF\vert_X),
\\
\label{ver.1.3}
\lim_I R\Gamma_\ket(X_I-Z,\cF\vert_{X_I-Z})
\simeq
R\Gamma_\ket(X-Z,\cF\vert_{X-Z}),
\end{gather}
where the limits run over the nonempty subsets $I$ of $\{1,\ldots,r\}$.
We will show
\begin{equation}
\label{ver.1.6}
R\Gamma_\ket(X_I,\cF\vert_{X_I})
\simeq
R\Gamma_\ket(X_I-Z,\cF\vert_{X_I-Z})
\end{equation}
below if $X_I\neq Z$.
Assume this for the moment.
Then
\begin{equation}
\label{ver.1.4}
\begin{split}
&\lim_I \fib(
R\Gamma_\ket(X_I,\cF\vert_{X_I})
\to
R\Gamma_\ket(X_I-Z,\cF\vert_{X_I-Z})
)
\\
\simeq &
\lim_I \fib(R\Gamma_\ket(Z,\cF\vert_Z)  \to R\Gamma_\ket(Z,\cF\vert_Z) \otimes_{\Z} \delta_I),
\end{split}
\end{equation}
where
\[
\delta_I=\left\{
\begin{array}{ll}
\Z & \text{if $X_I\neq Z$},
\\
0 & \text{if $X_I=Z$}.
\end{array}
\right.
\]
The $\R$-cone $G_I\otimes \R$ is contractible,
and $G_I\otimes \R-0$ is contractible if and only if $G_I\neq 0$.
Hence we have the following computations of singular cohomology complexes
\[
\Z\simeq R\Gamma_\sing(G_I\otimes \R),
\;
\delta_I \simeq R\Gamma_\sing(G_I\otimes \R-0).
\]
Consider $K:=G_1\cup \cdots \cup G_r$.
Since $\{G_1\otimes \R,\ldots,G_r\otimes \R\}$ is a closed cover of $K\otimes \R$,
we have
\[
\lim_I R\Gamma_\sing(G_I \otimes \R)
\simeq
R\Gamma(K \otimes \R),
\;
\lim_I R\Gamma_\sing(G_I \otimes \R-0)
\simeq
R\Gamma_\sing(K\otimes \R-0).
\]
Hence
\begin{equation}
\label{ver.1.5}
\lim_I \fib(\Z \to \delta_I)
\simeq
\fib(R\Gamma_\sing(K\otimes \R)\to R\Gamma_\sing(K\otimes \R-0)).
\end{equation}
By \cite[Theorem I.4.8.14(7)]{Ogu}, we have a bijection $P\times K\xrightarrow{\simeq} Q$.
The quotient $Q/P$ in the category of integral monoids is the image of $Q$ in $Q^\gp/P^\gp$.
Hence we have a bijection $K\simeq Q/P$.
Since $\theta$ is not vertical,
$Q/P$ is not a group by definition.
Hence $(Q/P)\otimes \R$ is not a vector space,
so $0$ is in the boundary of $(Q/P)\otimes \R$.
It follows that $0$ is in the boundary of $K\otimes \R$,
so the inclusion $K\otimes \R-0\to K\otimes R$ is a homotopy equivalence.
Together with \eqref{ver.1.2}, \eqref{ver.1.3}, \eqref{ver.1.4}, and \eqref{ver.1.5},
we deduce \eqref{ver.1.1}.

Step 5.
\emph{Log structure change.}
It remains to show \eqref{ver.1.6} if $X_I\neq Z$.
In this case,
$G_I\neq 0$.
Choose an element $i\in I$.
Then the induced map $P^\gp\oplus G_i^\gp\to Q^\gp$ is an isomorphism by \cite[Theorem I.4.8.14(7)]{Ogu}.
Toric resolution of singularities \cite[Theorem 11.1.9]{CLStoric} yields a map $\N^s\to G_i/G_I$ such that $\Z^s\xrightarrow{\simeq} (G_i/G_I)^\gp$.
Choose elements $x_1,\ldots,x_s\in G_i$ whose images in $G_i/G_I$ are the generators of $\N^s$.
Let $P'$ be the submonoid of $Q$ generated by $P$ and $x_1,\ldots,x_s$.
Then the induced map $P'^\gp \oplus G_I^\gp \to Q^\gp$ is an isomorphism.
Hence we have a virtual isomorphism
\[
\A_{Q,Q-G_I}
\to
\pt_{P'} \times \A_{G_I}
\]
since the underlying schemes of the domain and codomain are identified with $\ul{\A_{G_I}}$.
There exists a unique strict \'etale morphism $Y\to \ul{S}\times \pt_{P'}\times \A_{G_I}$ that pulled back to the strict \'etale morphism $X_I\to S\times_{\A_P} \A_{Q,Q-G_I}$.
Then the induced morphism $X_I\to Y$ is again a virtual isomorphism.
By Theorem \ref{virtual.1},
it suffices to show
\[
R\Gamma_\ket(Y,\cF\vert_Y)
\simeq
R\Gamma_\ket(Y-W,\cF\vert_{Y-W}),
\]
where $W:=Y\times_{\A_{G_I}} \pt_{G_I}$.

There exist unique log smooth morphisms $Y_0,Y_0-W_0\to \ul{S}$ that pulled back to the log smooth morphisms $Y,Y-W\to \ul{S}\times \pt_{P'}$.
Apply Lemma \ref{ver.2} below to these morphisms and use Proposition \ref{dim.5} to finish the proof.
\end{proof}

\begin{lem}
\label{ver.2}
Let $S$ be a noetherian fs log scheme,
let $\Lambda$ be a torsion commutative ring killed by an integer invertible on $S$,
and let $\cF\in \rD_\letale(S,\Lambda)$.
Then for every $X\in \lSm_{\ul{S}}$,
we have
\[
R\Gamma_\letale(X\times_{\ul{S}} S,\cF\vert_{X\times_{\ul{S}}S})
\simeq
R\Gamma_\letale((X-\partial X)\times_{\ul{S}} S,\cF\vert_{(X-\partial X)\times_{\ul{S}}S}).
\]
\end{lem}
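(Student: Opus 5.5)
The plan is to remove the log structure of $S$ by proper base change, reduce to constant coefficients, and then prove purity for the open immersion $X-\partial X\to X$ over a scheme with trivial log structure.

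\emph{Step 1: remove the log structure of $S$.} Let $r\colon S\to \ul{S}$ be the morphism removing the log structure. It is proper, and $X\times_{\ul{S}}S\to X$ and $(X-\partial X)\times_{\ul{S}}S\to X-\partial X$ are its base changes. Theorem \ref{dim.4}(1) then rewrites both sides as
\[
R\Gamma_\letale(X,p^*r_*\cF)
\quad\text{and}\quad
R\Gamma_\letale(X-\partial X,p^*r_*\cF),
\]
where $p\colon X\to \ul{S}$ is the structure morphism. Hence we may assume that $S$ has the trivial log structure, with $\cF$ replaced by $\cG:=r_*\cF$. It then suffices to prove that
\[
p^*\cG\to j_*j^*p^*\cG
\]
is an isomorphism, where $j\colon X-\partial X\to X$. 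This is exactly the absolute (trivial base) case of invariance under verticalization.

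\emph{Step 2: reduce to constant coefficients on a new base.} This is the usual reduction of Step 1 of Theorem \ref{ver.1}. The question is strict \'etale local on $S$ and on $X$. Proposition \ref{dim.3} says $p_*$ and $p_*j_*$ preserve colimits, and Proposition \ref{logv.1} says $\rD_\letale(S,\Lambda)$ is generated by $v_*\Lambda_V$ with $v$ proper. Proper base change (Theorem \ref{dim.4}(1)) and \eqref{ver.0.1} then reduce us to $\cG=\Lambda$ after replacing $S$ by $V$. The same argument with the generators $\Lambda_U$, $U\in S_\letale$, reduces us to proving
\[
R\Gamma_\letale(X,\Lambda)\simeq R\Gamma_\letale(X-\partial X,\Lambda).
\]
Here $X$ is log smooth over a noetherian scheme $S$ with trivial log structure, and $\partial X$ is the locus where the log structure is nontrivial.

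\emph{Step 3: local model and stalk computation.} Working strict \'etale locally on $X$ via \cite[Theorem IV.3.3.1]{Ogu}, we may assume that $X\to S\times \A_Q$ is strict \'etale for a sharp fs monoid $Q$, with $\lvert Q^{\gp}_{\mathrm{tors}}\rvert$ invertible. We then check the isomorphism $\Lambda\simeq j_*\Lambda$ on stalks. By Proposition \ref{dim.2}(2), this amounts to replacing $S$ by a strict henselization and $X$ by the strict henselization at a geometric point lying on a stratum $S\times \A_{Q,F}$-orbit with face $F$.

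If $S$ were regular, $X$ would be log regular and Fujiwara--Kato purity \cite[Theorem 7.4]{Ill} would finish the proof directly. For general $S$ I would instead use the following route:
\begin{itemize}
\item stratify $\A_Q$ by torus orbits and use Proposition \ref{dim.7} together with Theorem \ref{logv.7};
\item use the bijection $Q\simeq F\times(\text{complement})$ and Theorem \ref{virtual.1} to replace each stratum by a product of a log point $\pt_{Q/F}$ with a torus over $S$;
\item compute the Kummer \'etale cohomology of a punctured neighborhood of the log point via the explicit description of $\pi_1^{\log}$ in \cite[Example 4.7(a)]{Ill}.
\end{itemize}
This computation depends only on the monoid and not on $S$. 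It therefore reduces to the corresponding identity over $\Z[1/n]$, or over a separably closed field, where purity is known.

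The main obstacle I expect is Step 3: making the independence of the base precise for a singular $S$, since log regularity is unavailable. The first thing I would try is to realize the comparison as a base change along $S\to\Spec\Z[1/n]$ of the known isomorphism $\Lambda\simeq j_*\Lambda$ on $(\A_Q)_\ket$. For this I would check that the relevant $j_*$ commutes with arbitrary base change of the trivial-log base. I would do this by dévissage to strictly local bases with Proposition \ref{dim.2} and Theorem \ref{dim.4}(1), and then by the $\square$- and $\A^1$-invariance results of Theorems \ref{A1.1} and \ref{A1.2} for the toric directions.
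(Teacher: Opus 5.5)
\textbf{There is a genuine gap, in Step 3.} Your Steps 1 and 2 are legitimate reductions, but the proposal stops where the content of the lemma lies. You never prove that $\Lambda\to Rj_*\Lambda$ is an isomorphism for $X$ log smooth over a base with trivial log structure when $\ul S$ is not regular, as you acknowledge yourself.

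The route in your bullets does not supply this:
\begin{itemize}
\item The tools you list (Proposition \ref{dim.7}, the closed cover and Theorem \ref{logv.7}, Theorem \ref{virtual.1}) are those of Steps 3--5 of Theorem \ref{ver.1}. Their role there is to strip off the log structure of the \emph{base}. Over a trivial-log base they do nothing. With $P=0$ the only maximal $\theta$-critical face is $G_1=Q$, so $X_I=X$ and $P'=0$, and Step 5 of that proof concludes by invoking the very lemma you are proving.
\item Stratifying $X$ itself by torus orbits does not help either. The pieces supported on $\partial X$ restrict to zero on $X-\partial X$, so the statement fails for them individually. What you would really need is $i_F^*Rj_*\Lambda$, which is the purity statement again.
\item The sentence ``this computation depends only on the monoid and not on $S$'' is exactly the unproved claim. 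D\'evissage to strictly local bases via Proposition \ref{dim.2} does not give it, because a strictly local $\ul S$ can still be arbitrarily singular.
\item Invoking Theorems \ref{A1.1} and \ref{A1.2} ``for the toric directions'' names no mechanism.
\end{itemize}

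\textbf{The missing idea is that no stalk computation or purity argument is needed.} Consider directly the presheaf
\[
X\mapsto R\Gamma_\letale(X\times_{\ul S}S,\cF\vert_{X\times_{\ul S}S})
\]
on $\lSm_{\ul S}$, for arbitrary $\cF$ on the log scheme $S$. Your Step 1 is then unnecessary, since it is already built into the proofs of Theorems \ref{A1.1} and \ref{A1.2}. This presheaf is a log \'etale (hyper)sheaf, it is $\A^1$-invariant by Theorem \ref{A1.1}, and it is $\square$-invariant by Theorem \ref{A1.2}. By \cite[Proposition 2.24]{logA1}, any such object over a base with trivial log structure is automatically invariant under verticalization. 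This formal motivic input is what makes the singularities of $\ul S$ irrelevant, and it is how the paper argues.

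\textbf{If you prefer your base-change idea, it can probably be made to work, but only with inputs you have not supplied.}
\begin{itemize}
\item You need Kato--Nakayama's computation of $R\epsilon_*\Lambda$ for $\epsilon\colon X_\ket\to\ul X_{\et}$ in terms of $\ol{\cM}^\gp$, which is compatible with strict base change.
\item You need that $R\ul j_*\Lambda$, for the torus inside $\ul{\A_Q}$, commutes with arbitrary base change along $\ul S\to\Spec\Z[1/n]$. This could come, e.g., from a toric resolution, relative purity for a relative simple normal crossings boundary over an arbitrary base, and proper base change.
\end{itemize}
With these, the Fujiwara--Kato isomorphism over the regular base $\Z[1/n]$ base changes to $S$.
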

\begin{proof}
The presheaf $\cF$ given by
\[
X\in \lSm_{\ul{S}}
\mapsto
R\Gamma_\letale(X\times_{\ul{S}} S,\cF\vert_{X\times_{\ul{S}}S})
\]
satisfies $\A^1$-invariance by Theorem \ref{A1.1}, $\square$-invariance by Theorem \ref{A1.2}, and log \'etale descent.
Hence $\cF$ belongs to the full subcategory $\cC$ of $\rD(\Sh_{\letale}(\lSm_S,\Lambda))$ spanned by $\A^1$-invariant and $\square$-invariant objects.
Using \cite[Proposition 2.24]{logA1} to the obvious functor $\lSm_S\to \cC$,
we see that $\cF$ is invariant under verticalization,
which is what we want.
\end{proof}

\section{Relative log \'etale rigidity}
\label{rig}

In this section,
we prove relative log \'etale rigidity.
We begin with a result using $\A^1$-invariance and invariance under verticalization of log \'etale cohomology.

\begin{lem}
\label{rig.7}
Let $S$ be a noetherian fs log scheme,
and let $\Lambda$ be a torsion commutative ring killed by an integer invertible on $S$.
Then for every $\cF\in \rD_\letale(S,\Lambda)$,
$\rho_{S\sharp} \cF\in \rD(\Sh_\letale(\lSm_S,\Lambda))$ is $\A^1$-invariant and invariant under verticalization.
\end{lem}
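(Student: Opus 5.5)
We first compute the sections of $\rho_{S\sharp}\cF$ explicitly. For $X\in \lSm_S$ with structure morphism $f\colon X\to S$, we claim that
\[
(\rho_{S\sharp}\cF)(X)\simeq R\Gamma_\letale(X,f^*\cF).
\]
Indeed, at the level of sheaves $\rho_{S\sharp}$ is the pullback along the morphism of sites $\lSm_S\to S_\letale$, and it is exact (see the proof of Proposition \ref{DA.2}). Composing with $\rho_X^*$ therefore gives $\rho_X^*\rho_{S\sharp}\simeq f^*\colon \rD_\letale(S,\Lambda)\to\rD_\letale(X,\Lambda)$. One checks this on degree $0$ sheaves, where it is the usual compatibility of small-site pullbacks. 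It then extends to all of $\rD_\letale(S,\Lambda)$ because both sides preserve colimits (Proposition \ref{dim.8}), and because the derived category is generated under colimits and shifts by bounded-below objects, which are represented by complexes of sheaves. Evaluating at the final object of $X_\letale$ gives the formula.

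\emph{$\A^1$-invariance.} We must show that $R\Gamma_\letale(X,f^*\cF)\to R\Gamma_\letale(X\times\A^1,p^*f^*\cF)$ is an isomorphism, where $p\colon X\times\A^1\to X$ is the projection. Here $X$ is noetherian, being of finite type over $S$, and $\Lambda$ is killed by an integer invertible on $X$. Theorem \ref{A1.1}, applied to $X$ and the object $f^*\cF$, says that $p^*$ is fully faithful. Hence $\hom(\Lambda_X,f^*\cF)\simeq\hom(p^*\Lambda_X,p^*f^*\cF)$, which is exactly the required isomorphism. (If $\lSm_S$ is taken to include non-finite-type objects, we reduce to the finite type case by the usual limit argument; we take $\lSm_S$ to consist of noetherian objects here.)

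\emph{Invariance under verticalization.} We must show that $R\Gamma_\letale(X,f^*\cF)\to R\Gamma_\letale(X-\partial_SX,j^*f^*\cF)$ is an isomorphism, where $j\colon X-\partial_SX\to X$ is the open immersion. Applying $R\Gamma_\letale(S,-)$, this follows once we know that $f_*f^*\cF\to f_*j_*j^*f^*\cF$ is an isomorphism in $\rD_\letale(S,\Lambda)$, and this is precisely Theorem \ref{ver.1} since $f$ is log smooth. Thus the main difficulty is already absorbed into Theorem \ref{ver.1}.

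The only remaining point requiring care is the identification $\rho_X^*\rho_{S\sharp}\simeq f^*$ for unbounded complexes and hypersheaves. The rest is formal bookkeeping with the hom-adjunctions.
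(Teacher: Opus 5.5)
Your proof is correct and is essentially the paper's argument. The paper obtains the identification $\hom(\Lambda_X,\rho_{S\sharp}\cF)\simeq R\Gamma_\letale(X,f^*\cF)$ from the commutation $f^*\rho_{S\sharp}\simeq\rho_{X\sharp}f^*$ and the full faithfulness of $\rho_{X\sharp}$ (Proposition \ref{DA.2}), which amounts to your identification $\rho_X^*\rho_{S\sharp}\simeq f^*$, and then concludes by Theorem \ref{ver.1} and Theorem \ref{A1.1} exactly as you do.
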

\begin{proof}
We argue as in \cite[Sous-lemme 4.7]{Ayo14}.
Let $f\colon X\to S$ be a log smooth morphism of noetherian fs log schemes,
and let $j\colon U:=X-\partial_S X\to X$ be the obvious open immersion.
We only need to show
\[
\hom_{\rD(\Sh_\letale(\lSm_S,\Lambda))}(\Lambda_V,f_*f^*\rho_{S\sharp} \cF)
\simeq
\hom_{\rD(\Sh_\letale(\lSm_S,\Lambda))}(\Lambda_V,f_*j_*j^*f^*\rho_{S\sharp} \cF)
\]
for every $V\in \lSm_S$.
By adjunction,
we reduce to the case where $V=S$.
Since $\rho_{(-)\sharp}$ commutes with $f^*$ and $j^*f^*$,
it suffices to show
\[
\hom_{\rD(\Sh_\letale(\lSm_X,\Lambda))}(\Lambda_X,\rho_{X\sharp} f^*\cF)
\simeq
\hom_{\rD(\Sh_\letale(\lSm_{U},\Lambda))}(\Lambda_{U},\rho_{U\sharp} j^*f^*\cF).
\]
By Proposition \ref{DA.2},
$\rho_{X\sharp}$ and $\rho_{U\sharp}$ are fully faithful.
Hence it suffices to show
\[
\hom_{\rD_\letale(X,\Lambda)}(\Lambda_X,f^*\cF)
\simeq
\hom_{\rD_\letale(U,\Lambda)}(\Lambda_{U}, j^*f^*\cF).
\]
This follows from Theorem \ref{ver.1}.

The proof of $\A^1$-invariance is similar using Theorem \ref{A1.1}.
\end{proof}

Let us argue as Ayoub \cite[Corollaire 4.11]{Ayo14} for the fully faithful part of relative log \'etale rigidity.

\begin{prop}
\label{rig.2}
Let $S$ be a noetherian fs log scheme,
and let $\Lambda$ be a torsion commutative ring killed by an integer invertible on $S$.
Then the functor
\[
\rho_{!}\colon \rD_\letale(S,\Lambda)\to \DA_\letale(S,\Lambda)
\]
in \eqref{DA.0.1}
is fully faithful.
\end{prop}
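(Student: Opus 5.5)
The functor $\rho_!$ in \eqref{DA.0.1} is a composite of three functors, and we treat each in turn. By Proposition \ref{DA.2}, $\rho_{S\sharp}$ is fully faithful. By Lemma \ref{rig.7}, its essential image consists of objects that are $\A^1$-invariant and invariant under verticalization. These objects lie in $\DA_\letale^\eff(S,\Lambda)$, so $L_\mot$ acts as the identity on them. Hence
\[
L_\mot\rho_{S\sharp}\colon \rD_\letale(S,\Lambda)\to \DA_\letale^\eff(S,\Lambda)
\]
is fully faithful. It remains to show that $\Sigma^\infty$ is fully faithful on the essential image of $L_\mot\rho_{S\sharp}$.

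For this we follow Ayoub \cite[Corollaire 4.11]{Ayo14}. The $\P^1$-stabilization is computed as the colimit along $\Omega_{\P^1}$, so it suffices to show two things for objects $M=\rho_{S\sharp}\cF$:
\begin{enumerate}
\item[(a)] $M\to \Omega_{\P^1}(M\otimes \P^1)$, with $\P^1$ pointed at $\infty$, is an isomorphism in $\DA_\letale^\eff(S,\Lambda)$;
\item[(b)] $\Omega_{\P^1}$ and $-\otimes \P^1$ preserve the essential image of $\rho_{S\sharp}$.
\end{enumerate}

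For (a) we first identify $\Lambda_{\P^1}/\Lambda_\infty$ with $\rho_{S\sharp}$ of a twist. The projective bundle formula for log \'etale cohomology is available by pulling back the classical statement along $r\colon S\to \ul{S}$ with Theorem \ref{dim.4}, as in the proof of Theorem \ref{A1.1}. Together with $\A^1$-invariance, this gives
\[
\Lambda_{\P^1}/\Lambda_\infty\simeq \rho_{S\sharp}\Lambda(1)[2]
\]
in $\DA_\letale^\eff(S,\Lambda)$. This is the log analogue of the fact that $\mu_n$ is a Tate object over $\Z[1/n]$. Since $\Lambda$ is killed by an integer invertible on $S$, $\Lambda(1)$ is an invertible object of $\rD_\letale(S,\Lambda)$. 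Hence
\[
M\otimes\P^1\simeq \rho_{S\sharp}(\cF(1)[2]),
\]
using that $\rho_{S\sharp}$ is symmetric monoidal. Next,
\[
\Omega_{\P^1}\rho_{S\sharp}\cG=\underline{\Hom}(\Lambda_{\P^1}/\Lambda_\infty,\rho_{S\sharp}\cG)
\]
has sections over $X\in\lSm_S$ given by the fiber of $R\Gamma_\letale(X\times\P^1,\cG)\to R\Gamma_\letale(X,\cG)$. The projective bundle formula, together with Theorem \ref{dim.4}(1) applied to the proper projection $X\times\P^1\to X$, identifies this fiber with $R\Gamma_\letale(X,\cG(-1)[-2])$. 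Therefore $\Omega_{\P^1}\rho_{S\sharp}\cG\simeq\rho_{S\sharp}(\cG(-1)[-2])$, and $M\to\Omega_{\P^1}(M\otimes\P^1)$ becomes the invertible map $\cF\simeq \cF(1)[2](-1)[-2]$. This proves (a) and (b).

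Given (a) and (b), each term of the colimit computing the stable hom spectra between $\Sigma^\infty M$ and $\Sigma^\infty M'$ agrees with the unstable term. Full faithfulness then follows once we know $\hom$ out of $\Sigma^\infty$ of a compact object commutes with this colimit. This is reduced to generators $\Lambda_U$ with $U\in S_\letale$, using Propositions \ref{dim.9} and \ref{dim.10}, and Proposition \ref{dim.6} after reducing to the strictly henselian case via Proposition \ref{dim.2}.

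The main obstacle is the projective bundle computation of $\Omega_{\P^1}$ in the log setting, over an arbitrary noetherian fs log base, for unbounded $\cF$. I would first reduce to $\cF$ in degree $0$ using Proposition \ref{dim.3}, and then push down to the underlying scheme using proper base change for $S\to\ul{S}$ (Theorem \ref{dim.4}(1)), where the classical projective bundle formula applies.
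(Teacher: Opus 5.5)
\paragraph{Gap in step (a).} You claim that the projective bundle formula gives $T:=\Lambda_{\P^1}/\Lambda_\infty\simeq\rho_{S\sharp}\Lambda(1)[2]$ in $\DA^{\eff}_{\letale}(S,\Lambda)$. It does not. The projective bundle formula computes $\hom(T,\rho_{S\sharp}\cG)$ only for $\cG\in\rD_{\letale}(S,\Lambda)$. So it shows only that $T$ and $\rho_{S\sharp}\Lambda(1)[2]$ corepresent the same functor on the essential image of $\rho_{S\sharp}$; in other words, $\rho_{S\sharp}\Lambda(1)[2]$ is the reflection of $T$ into that image. To get an isomorphism in $\DA^{\eff}_{\letale}(S,\Lambda)$ you would have to test against all objects of that category. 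That is an effective rigidity statement, and neither the paper nor \cite{Ayo14} provides it: Ayoub's comparison morphism $\alpha\colon T\to\rho_\sharp\Lambda(1)[2]$ is only shown to become invertible after applying $\Sigma^\infty$. Consequently the following are all unproved:
\begin{itemize}
\item $M\otimes\P^1\simeq\rho_{S\sharp}(\cF(1)[2])$;
\item the $-\otimes\P^1$ half of (b);
\item (a) itself, which is really a cancellation theorem $M\simeq\Omega_{\P^1}(M\otimes\P^1)$ in the effective category.
\end{itemize}
Your final termwise comparison of stable and unstable hom spectra rests on these, so it fails as written.

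\paragraph{What survives and how to repair it.} Your formula $\Omega_{\P^1}\rho_{S\sharp}\cG\simeq\rho_{S\sharp}(\cG(-1)[-2])$ is correct, and it is exactly the input the paper uses. Put $\rho_\sharp:=L_{\mot}\rho_{S\sharp}$. Together with Lemma \ref{rig.7}, your formula shows that $\bF:=(\rho_\sharp\cF,\rho_\sharp(\cF(1))[2],\rho_\sharp(\cF(2))[4],\ldots)$ is an $\Omega_{\P^1}$-spectrum with zeroth term $\rho_\sharp\cF$.

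The missing ingredient is a stable one. Pull back Ayoub's morphisms $\alpha^n_{\ul{S}}\colon T^{\otimes n}\to\rho_\sharp(\Lambda_{\ul{S}}(n))[2n]$ from \cite[Proposition 4.10]{Ayo14} along $S\to\ul{S}$; these become invertible after $\Sigma^\infty$. Tensoring with $\rho_\sharp\cF$ makes the maps $T^{\otimes n}\otimes\rho_\sharp\cF\to\rho_\sharp(\cF(n))[2n]$ stable isomorphisms, which identifies $\Sigma^\infty\rho_\sharp\cF$ with $\bF$. Hence $\colim_n\ul{\Hom}(T^{\otimes n},T^{\otimes n}\otimes\rho_\sharp\cF)\simeq\rho_\sharp\cF$. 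This colimit formula for $\Omega^\infty\Sigma^\infty$ uses that $\ul{\Hom}(T,-)$ preserves colimits (Proposition \ref{dim.10}), which replaces your compactness discussion. Full faithfulness then follows from Proposition \ref{DA.2} and Lemma \ref{rig.7}, with no cancellation statement needed.
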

\begin{proof}
Consider the composite functor
\[
\rho_{\sharp}
\colon
\rD_\letale(S,\Lambda)
\xrightarrow{\rho_{S\sharp}}
\rD(\Sh_\letale(\lSm_S,\Lambda))
\xrightarrow{L_\mot}
\DA_\letale^\eff(S,\Lambda).
\]
We need to show
\[
\rho_\sharp \cF
\simeq
\colim_n \ul{\Hom}(T^{\otimes n},T^{\otimes n} \otimes \rho_\sharp\cF)
\]
for every $\cF\in \rD_\letale(S,\Lambda)$,
where $\ul{\Hom}$ denotes the internal Hom.
By Proposition \ref{dim.10},
$\ul{\Hom}(T,-)$ preserves colimits.
Hence it suffices to show
\[
\rho_\sharp \cF
\simeq
\colim_n \ul{\Hom}(T^{\otimes n} ,\colim_m \ul{\Hom}(T^{\otimes m},T^{\otimes m+n} \otimes \rho_\sharp\cF)).
\]
By \cite[Proposition 4.10]{Ayo14},
we have a natural morphism
\[
\alpha_{\ul{S}}^n
\colon
T^{\otimes n} \to \rho_\sharp(\Lambda_{\ul{S}}(n))[2n]
\]
in $\DA_\et^\eff(\ul{S},\Lambda)$ such that $\Sigma^\infty \alpha_{\ul{S}}^n$ is an isomorphism.
Pulling back $\alpha_{\ul{S}}^n$ to $S$,
we obtain a natural morphism
\[
\alpha_{S}^n
\colon
T^{\otimes n} \to \rho_\sharp(\Lambda_{S}(n))[2n]
\]
in $\DA_\letale^\eff(S,\Lambda)$ such that $\Sigma^\infty \alpha_{S}^n$ is an isomorphism.
It follows that we have
\[
\colim_m \ul{\Hom}(T^{\otimes m},T^{\otimes m+n}\otimes \rho_\sharp \cF)
\simeq
\colim_m \ul{\Hom}(T^{\otimes m},T^{\otimes m}\otimes \rho_\sharp (\cF(n))[2n]).
\]
Hence it suffices to show
\[
\rho_\sharp \cF
\simeq
\colim_n \ul{\Hom}(T^{\otimes n} , \colim_m \ul{\Hom}(T^{\otimes m},T^{\otimes m}\otimes \rho_\sharp (\cF(n))[2n])).
\]
Since $\ul{\Hom}(T,-)$ preserves colimits.
it suffices to show
\[
\rho_\sharp \cF
\simeq
\colim_m \ul{\Hom}(T^{\otimes m} , \colim_n \ul{\Hom}(T^{\otimes n},T^{\otimes m}\otimes \rho_\sharp (\cF(n))[2n])).
\]
Since $\rho_\sharp (\cF(n))\simeq \rho_\sharp(\Lambda_S(n)) \otimes \rho_\sharp \cF$,
$\alpha_S^n$ induces a morphism
\[
T^{\otimes n} \otimes \rho_\sharp \cF
\to
\rho_\sharp (\cF(n))[2n]
\]
that becomes an isomorphism after applying $\Sigma^\infty$.
We have 
\[
\rho_{S\sharp} (\cF(n))[2n]
\simeq
\ul{\Hom}(T,\rho_{S\sharp} (\cF(n+1))[2n+2])
\]
in $\rD(\Sh_\letale(\lSm_S,\Lambda))$,
so we have
\[
\rho_{\sharp} (\cF(n))[2n]
\simeq
\ul{\Hom}(T,\rho_\sharp (\cF(n+1))[2n+2])
\]
in $\DA_\letale^\eff(S,\Lambda)$ by Lemma \ref{rig.7}.
Hence we can define
\[
\bF
:=
(\rho_\sharp \cF,\rho_\sharp (\cF(1))[2],\rho_\sharp (\cF(2))[4],\ldots)\in \DA_\letale(S,\Lambda).
\]
The isomorphism
\[
\Sigma^\infty \alpha_S^m \otimes \id
\colon
T^{\otimes m}
\otimes \bF
\to
\rho_\sharp(\Lambda_S(m))[2m] \otimes \bF
\]
yields an isomorphism
\begin{align*}
& \colim_n \ul{\Hom}(T^{\otimes n},T^{\otimes m} \otimes \rho_\sharp(\cF(n))[2n])
\\
\simeq &
\colim_n \ul{\Hom}(T^{\otimes n}, \rho_\sharp(\cF(m+n))[2m+2n]).
\end{align*}
Hence it suffices to show
\[
\rho_\sharp \cF
\simeq
\colim_m \ul{\Hom}(T^{\otimes m}, \colim_n \ul{\Hom}(T^{\otimes n}, \rho_\sharp(\cF(m+n))[2m+2n])).
\]
This is equivalent to
\[
\rho_\sharp \cF
\simeq
\colim_n \ul{\Hom}(T^{\otimes n}, \rho_\sharp(\cF(n))[2n])
\]
since $\ul{\Hom}(T,-)$ preserves colimits,
which is clear.
\end{proof}

\begin{cor}
\label{rig.5}
Let $S$ be a noetherian fs log scheme,
and let $\Lambda$ be a torsion commutative ring killed by an integer invertible on $S$.
Then we have an induced fully faithful functor
\[
\rho_!
\colon
\rD_\ket(S,\Lambda)
\to
\DA_\letale^\ex(S,\Lambda).
\]
\end{cor}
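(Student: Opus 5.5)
The functor in question is the composite
\[
\rD_\ket(S,\Lambda)\xrightarrow{\kappa^*}\rD_\letale(S,\Lambda)\xrightarrow{\rho_!}\DA_\letale(S,\Lambda).
\]
I will show two things: that this composite is fully faithful, and that its essential image lies in $\DA_\letale^\ex(S,\Lambda)$.

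\emph{Full faithfulness.} By Proposition \ref{dim.5}, $\kappa^*$ is fully faithful for noetherian $S$. By Proposition \ref{rig.2}, $\rho_!$ is fully faithful. The composite of fully faithful functors is fully faithful, so nothing more is needed here.

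\emph{Essential image.}
\begin{enumerate}
\item[(a)] $\rD_\ket(S,\Lambda)$ is generated under colimits and shifts by the objects $\Lambda_U$ for $U\in S_\ket$, since these form a set of generators of the sheaf topos.
\item[(b)] $\kappa^*$ and $\rho_!$ preserve colimits and shifts, because each is a left adjoint (or a composite of left adjoints: $\rho_{S\sharp}$, $L_\mot$, $\Sigma^\infty$).
\item[(c)] $\DA_\letale^\ex(S,\Lambda)$ is closed under colimits and shifts by definition.
\end{enumerate}
By (a)--(c), it suffices to check that $\rho_!\kappa^*\Lambda_U$ lies in $\DA_\letale^\ex(S,\Lambda)$ for each $U\in S_\ket$.

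Now $\kappa^*\Lambda_U$ is the log étale sheafification of the representable, i.e.\ $\Lambda_U\in\rD_\letale(S,\Lambda)$. Next, $\rho_{S\sharp}\Lambda_U$ is the object of $\rD(\Sh_\letale(\lSm_S,\Lambda))$ represented by $U$, viewed in $\lSm_S$ (Kummer étale implies log smooth). Hence
\[
\rho_!\kappa^*\Lambda_U\simeq\Sigma^\infty U_+ .
\]
Finally, a Kummer étale morphism is exact: Kummer means the chart map is injective with torsion cokernel, which implies exactness. So $U$ is an exact object of $\lSm_S$, and $\Sigma^\infty U_+$ is one of the generators of $\DA_\letale^\ex(S,\Lambda)$.

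Therefore the composite restricts to a fully faithful functor $\rho_!\colon\rD_\ket(S,\Lambda)\to\DA_\letale^\ex(S,\Lambda)$.

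The only point needing care is the identification $\rho_{S\sharp}\Lambda_U\simeq\Lambda_U$, together with the fact that $\kappa^*$ sends representables to representables. Both follow from the fact that the left adjoints of restriction functors induced by the site inclusions $S_\ket\to S_\letale\to\lSm_S$ send representable sheaves to representable sheaves, which is formal once sheafification is taken into account.
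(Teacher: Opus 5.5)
Your proposal is correct and follows the paper's proof. The composite $\rho_!\kappa^*$ is fully faithful by Propositions \ref{dim.5} and \ref{rig.2}, and it lands in $\DA_\letale^\ex(S,\Lambda)$ because it sends $\Lambda_U$ to $\Lambda_U$ for $U\in S_\ket$, which is exact over $S$; you only spell out the generation and colimit-preservation steps that the paper leaves implicit.

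One small imprecision: injectivity plus torsion cokernel does not give exactness, as $\N\hookrightarrow\Z$ shows. Exactness of a Kummer map $\theta\colon P\to Q$ of fs monoids instead uses that every $q\in Q$ has a multiple in $\theta(P)$, together with saturation of $P$.
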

\begin{proof}
The composite
\[
\rD_\ket(S,\Lambda)
\xrightarrow{\kappa^*}
\rD_\letale(S,\Lambda)
\xrightarrow{\rho_!}
\DA_\letale(S,\Lambda)
\]
sends $\Lambda_X$ to $\Lambda_X$ for every $X\in S_\ket$,
so the essential image lies in $\DA_\letale^\ex(S,\Lambda)$ since $X$ is exact over $S$.
Propositions \ref{dim.5} and \ref{rig.2} finish the proof.
\end{proof}

\begin{prop}
\label{rig.8}
Let $S$ be a noetherian fs log scheme,
and let $\Lambda$ be a torsion commutative ring killed by an integer invertible on $S$.
Then $\rho_!\colon \rD_\ket(S,\Lambda)\to \DA_\letale^\ex(S,\Lambda)$ commutes with $(n)$ for every integer $n$.
The analogous statement holds in the log \'etale case too.
\end{prop}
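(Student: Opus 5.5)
We must show $\rho_!(\cF(n))\simeq (\rho_!\cF)(n)$, where on the left $(n)$ is the Tate twist $\otimes\Lambda(n)$ in $\rD_\ket(S,\Lambda)$ (with $\Lambda(1)=\mu_N^{\otimes}$ for $N$ killing $\Lambda$), and on the right $(n)$ is the motivic Tate twist $\otimes \Lambda(1)^{\otimes n}$ in $\DA_\letale(S,\Lambda)$, with $\Lambda(1):=T[-2]$. The plan is to reduce everything to the case $n=1$ and to a comparison of unit objects, using that $\rho_!$ is symmetric monoidal. Indeed $\rho_{S\sharp}$, $L_\mot$ and $\Sigma^\infty$ are symmetric monoidal: $\rho_{S\sharp}$ is the left Kan extension along $S_\letale\to\lSm_S$, which preserves fiber products over $S$, and the other two are monoidal localizations/stabilizations. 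We also precompose with $\kappa^*$, which is symmetric monoidal. So $\rho_!(\cF(n))\simeq \rho_!\cF\otimes \rho_!(\Lambda_S(n))$. Hence it suffices to produce an isomorphism
\[
\rho_!(\Lambda_S(1))\simeq \Lambda(1)=\Sigma^\infty T[-2]
\]
in $\DA_\letale(S,\Lambda)$. Negative twists then follow by tensoring with inverses, since both twists are invertible and $\rho_!$ is monoidal.

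For $n=1$, I reuse the morphism from the proof of Proposition \ref{rig.2}. By \cite[Proposition 4.10]{Ayo14} there is $\alpha^1_{\ul S}\colon T\to \rho_\sharp(\Lambda_{\ul S}(1))[2]$ in $\DA^\eff_\et(\ul S,\Lambda)$ with $\Sigma^\infty\alpha^1_{\ul S}$ an isomorphism. Pulling back along $S\to\ul S$ gives $\alpha^1_S$ with $\Sigma^\infty\alpha^1_S$ an isomorphism, as already recorded in that proof. This requires compatibility of $\rho_!$ with pullback along $S\to \ul S$ and the identification $\Lambda_{\ul S}(1)|_S\simeq \Lambda_S(1)$. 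Both are immediate: pullback of $\rho_\sharp$ of a representable is the representable of the pullback, and $\mu_N$ pulls back to $\mu_N$.

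This yields $\rho_!(\Lambda_S(1))\simeq \Sigma^\infty T[-2]$, and tensoring with $\rho_!\cF$ gives the claim for the log \'etale functor $\rho_!\colon\rD_\letale(S,\Lambda)\to\DA_\letale(S,\Lambda)$. The Kummer \'etale statement follows by composing with $\kappa^*$. Since $\kappa^*$ is monoidal and sends $\Lambda_S(1)$ to $\Lambda_S(1)$, and $\DA^\ex_\letale$ is closed under Tate twists, the twisted object lands in the correct subcategory.

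The main obstacle I expect is verifying that $\rho_!$ is genuinely symmetric monoidal, and in particular that $\rho_{S\sharp}$ is monoidal on derived categories of hypersheaves, which requires exactness of sheafification and compatibility with the tensor product. If this is delicate, a fallback is to avoid monoidality for general $\cF$: use that both sides commute with colimits (Propositions \ref{dim.9}, \ref{dim.10}) and reduce to generators $\Lambda_U$, $U\in S_\letale$. Then $U\to S$ is log \'etale, $\rho_!$ commutes with $u_\sharp$, and twisting commutes with $u_\sharp$ by the projection formula. This reduces the claim to $\cF=\Lambda_U$ over $U$, i.e. the unit case handled above.
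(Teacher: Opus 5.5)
Your proposal is correct and follows essentially the same route as the paper. The paper also combines the isomorphisms $\Sigma^\infty\alpha_S^n\colon \Lambda_S(n)\simeq \rho_!(\Lambda_S(n))$ from the proof of Proposition \ref{rig.2} with the symmetric monoidality of $\rho_!$ to handle $n\geq 0$, and it deduces negative twists from $\rho_!(\cF(-n))(n)\simeq \rho_!(\cF(-n)(n))\simeq\rho_!\cF$, which is your ``tensor with inverses'' step; the only cosmetic differences are that you take tensor powers of $\alpha^1_S$ instead of using $\alpha^n_S$ for each $n$, and that your fallback via generators is unnecessary.
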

\begin{proof}
We focus on the Kummer \'etale case since the proofs are similar.

As noted in the proof of Proposition \ref{rig.2},
we have
\[
\Sigma^\infty \alpha_S^n\colon \Lambda_S(n)\xrightarrow{\simeq} \rho_! (\Lambda_S(n))
\]
for every integer $n\geq 0$.
Since $\rho$ is symmetric monoidal,
we see that $\rho_!$ commutes with $(n)$ for every $n\geq 0$.
For every $\cF\in \rD_\ket(S,\Lambda)$,
we have
\[
\rho_! (\cF(-n)) (n) \simeq \rho_! (\cF(-n) (n)) \simeq \rho_! \cF.
\]
Hence $\rho_!$ commutes with $(-n)$ as well.
\end{proof}

For a finite dimensional noetherian fs log scheme $S$,
let $\lSch_S^\Zar$ be the category of fs log schemes of finite type over $S$ admitting charts Zariski locally.

\begin{rmk}
Recall that $B$ is a finite dimensional noetherian base scheme.

According to \cite[Theorem 1.1.1]{logshriek},
$\SH$ is a log motivic $\infty$-category in the sense of \cite[Definition 2.1.1]{logshriek} on the category $\lSch^\Zar_B$.
A similar proof shows that $\DA_\letale(-,\Lambda)$ is a log motivic $\infty$-category on $\lSch^\Zar_B$.
This establishes \cite[Theorems 1.2.1, 1.3.1]{logsix} for $\DA_\letale^\ex(-,\Lambda)$ on $\lSch^\Zar_B$.
\end{rmk}

Let us argue as Cisinski--D\'eglise \cite[Proposition 4.4.3, Theorem 4.5.2]{CDetale} for the essentially surjective part of relative log \'etale rigidity.

\begin{prop}
\label{rig.3}
Let $f\colon X\to S$ be a proper morphism of noetherian fs log schemes,
and let $\Lambda$ be a torsion commutative ring killed by an integer invertible on $S$.
Then
\[
\rho_! f_*
\simeq
f_* \rho_! \colon
\rD_\ket(X,\Lambda)
\to
\DA_\letale^\ex(S,\Lambda).
\]
\end{prop}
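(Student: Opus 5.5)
There is a natural exchange transformation $\rho_! f_* \to f_*\rho_!$. It is obtained from $f^*\rho_! \simeq \rho_! f^*$ by adjunction: the unit $\id\to f_*f^*$ gives
\[
\rho_! f_* \to f_* f^* \rho_! f_* \simeq f_*\rho_! f^* f_* \to f_*\rho_!,
\]
using the counit $f^*f_*\to \id$. Here $f^*\rho_!\simeq \rho_! f^*$ holds because $\rho_{(-)\sharp}$, $L_\mot$ and $\Sigma^\infty$ all commute with pullback. The plan is to show that this map is an isomorphism by testing it against generators, as in \cite[Proposition 4.4.3]{CDetale}.

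\textbf{Step 1: reduce to generators in the target.} The question is strict \'etale local on $S$. For a strict \'etale $e\colon S'\to S$ we have $e^*\rho_!\simeq\rho_! e^*$, and $e^*f_*\simeq f'_*e'^*$ holds on both sides: on $\rD_\ket$ this is Theorem \ref{dim.4}, and on $\DA_\letale$ it is smooth base change. So we may assume $S$ has a chart neat at a point. It suffices to check that
\[
\hom(\Lambda_U(n)[k],\rho_!f_*\cF)\to\hom(\Lambda_U(n)[k],f_*\rho_!\cF)
\]
is an isomorphism for every exact log smooth $U\to S$ and all integers $n,k$. These objects generate $\DA_\letale^\ex(S,\Lambda)$.

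\textbf{Step 2: reduce to generators in the source.} Both $\rho_!f_*$ and $f_*\rho_!$ preserve colimits. For $\rho_!f_*$ this follows from Proposition \ref{dim.3}. For $f_*\rho_!$ it follows from Proposition \ref{dim.10} (passed to the stabilization) together with the ex-version of Proposition \ref{dim.10}, i.e.\ compact generation of $\DA^\ex_\letale$. We may also assume $X$ has a neat chart locally; this reduction needs some care and is discussed under the main obstacle below. By Proposition \ref{logv.1}, $\rD_\ket(X,\Lambda)$ is generated by objects $v_*\Lambda_V$ with $v\colon V\to X$ finite Kummer, hence proper. Since $(fv)_*\simeq f_*v_*$, it suffices to treat $\cF=\Lambda_X$ for an arbitrary proper $f$, replacing $X$ by $V$. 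By Proposition \ref{rig.8}, twists can be moved into the coefficients.

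\textbf{Step 3: the key computation.} We must now compare $\hom_{\DA}(\Lambda_U,\rho_!f_*\Lambda_X)$ with $\hom_{\DA}(\Lambda_{U\times_S X}, \Lambda(n)[k])$.

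For the first group, we use that $\rho_!$ is fully faithful (Corollary \ref{rig.5}/Proposition \ref{rig.2}). Its right adjoint $\rho^*$ sends a motive to its restriction to the small site, and Lemma \ref{rig.7} identifies $\rho^*\rho_!$ with evaluation. Hence the first group is $R\Gamma_\letale(U,f_*\Lambda_X(n))$. By proper base change (Theorem \ref{dim.4}(1)) this equals $R\Gamma_\letale(U\times_S X,\Lambda(n))$.

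For the second group, we use the same formula over $X$: $\hom_{\DA(X)}(\Lambda_{U\times_SX},\rho_!\Lambda(n))$. Again by Lemma \ref{rig.7} applied over $X$, this is the log \'etale cohomology of the log smooth $X$-scheme $U\times_SX$. Here $U\times_S X$ is log smooth over $X$ but not in $X_\letale$; Lemma \ref{rig.7} is exactly what gives $\rho_{X\sharp}\cG(W)\simeq R\Gamma_\letale(W,\cG|_W)$ for every $W\in\lSm_X$, since the $\A^1$- and verticalization invariance show that $L_\mot$ does not change it. The $\P^1$-stabilization is harmless by the argument in Proposition \ref{rig.2}, which shows that $\rho_\sharp\cF$ is already an $\Omega_T$-spectrum. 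Both sides are therefore $R\Gamma_\letale(U\times_SX,\Lambda(n))$. Finally we check that the exchange map induces the identity on these groups; this follows by unwinding the unit and counit, since everything is computed by restriction along $U\times_S X\to U$.

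\textbf{Main obstacle.} The hardest point is the identification of $\rho_!$-hom groups for log smooth test objects $W$ that are not in the small site. This requires Lemma \ref{rig.7} in a form valid over arbitrary $W\in\lSm_S$, namely that $\rho_{S\sharp}\cF$ evaluated at $W$ is $R\Gamma_\letale(W,\cF|_W)$, compatibly with stabilization.

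Because of this, I expect it is cleaner to skip the generator reduction in the source entirely. One works directly with Step 3 for arbitrary $\cF$: the formula $\hom(\Lambda_U(n)[k],\rho_!\cG)\simeq R\Gamma_\letale(U,\cG(-n))[-k]$ holds for all $U\in\lSm_S$, so proper base change alone finishes the argument. The only additional input needed is Proposition \ref{dim.5}, to pass between the Kummer \'etale and log \'etale versions.
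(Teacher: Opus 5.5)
Your final route is correct and is essentially the paper's argument. It skips the source reduction, tests against $\Lambda_U(n)[k]$ for exact log smooth $U\to S$, moves twists with Proposition~\ref{rig.8}, and identifies both sides with the Kummer \'etale cohomology of $U\times_S X$ via Theorem~\ref{dim.4}, the adjunction $f^*\dashv f_*$, and full faithfulness of $\rho_!$ (using Proposition~\ref{dim.5}).

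The paper disposes of your ``main obstacle'' more cheaply. For $g\colon U\to S$ it uses $g_\sharp\dashv g^*$, $g^*\rho_!\simeq\rho_!g^*$ and $g^*f_*\simeq f'_*g'^*$ to replace $S$ by $U$, so the test object becomes the unit and Corollary~\ref{rig.5} applies directly over $U$ and over $X\times_S U$. Your Steps 1--2 can be dropped: the strict \'etale localization and the reduction to $v_*\Lambda_V$ are not needed, and the compact-generation claim for $\DA_\letale^\ex$ is not justified anywhere in the paper.
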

\begin{proof}
It suffices to show
\[
\hom_{\DA_\letale^\ex(S,\Lambda)}(\Lambda_U(n),\rho_! f_* \cF)
\simeq
\hom_{\DA_\letale^\ex(S,\Lambda)}(\Lambda_U(n),f_*\rho_! \cF)
\]
for every exact log smooth morphism $g\colon U\to S$, $\cF\in \rD_\ket(X,\Lambda)$, and integer $n$.
Since $f^*$ commutes with $(n)$, by taking right adjoints, we see that $f_*$ commutes with $(-n)$ and hence with $(n)$.
Also, $\rho_!$ commutes with $(n)$ by Proposition \ref{rig.8}.
Hence we reduce to the case where $n=0$.

Note that $g^*$ commutes with $f_*$ by Theorem \ref{dim.4}(2).
Also, $g^*$ commutes with $\rho_!$.
By adjunction,
replacing $S$ by $U$,
we reduce to the case where $U=S$.
Since $\Lambda_S\simeq \rho_! \Lambda_S$ and $\rho_!$ is fully faithful by Corollary \ref{rig.5},
it suffices to show
\[
\hom_{\rD_\ket(S,\Lambda)}(\Lambda_S,f_* \cF)
\simeq
\hom_{\DA_\letale^\ex(S,\Lambda)}(\Lambda_S,f_*\rho_! \cF).
\]
This is equivalent to
\[
\hom_{\rD_\ket(X,\Lambda)}(\Lambda_X,\cF)
\simeq
\hom_{\DA_\letale^\ex(X,\Lambda)}(\Lambda_X,\rho_! \cF).
\]
This holds since $\rho_!$ is fully faithful and $\rho_! \Lambda_X \simeq \Lambda_X$.
\end{proof}

Now, we finish the proof of relative Kummer \'etale rigidity.

\begin{thm}
\label{rig.4}
Let $S$ be a finite dimensional noetherian fs log scheme,
and let $\Lambda$ be a torsion commutative ring killed by an integer invertible on $S$.
Then the functor
\[
\rho_!
\colon
\rD_\ket(S,\Lambda)
\to
\DA_\letale^\ex(S,\Lambda)
\]
is an equivalence of symmetric monoidal $\infty$-categories.
\end{thm}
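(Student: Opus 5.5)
By Corollary \ref{rig.5}, $\rho_!\colon \rD_\ket(S,\Lambda)\to \DA_\letale^\ex(S,\Lambda)$ is fully faithful. It is symmetric monoidal as a composite of symmetric monoidal functors ($\kappa^*$, $\rho_{S\sharp}$, $L_\mot$, $\Sigma^\infty$), since each commutes with the representables and with tensor products. So the content is essential surjectivity. The functor $\rho_!$ preserves colimits, being a composite of left adjoints. Hence its essential image is a full subcategory closed under colimits and shifts, and by Proposition \ref{rig.8} also under Tate twists. By the definition of $\DA_\letale^\ex(S,\Lambda)$, it therefore suffices to show that $\Sigma^\infty X_+=\Lambda_X$ lies in the essential image for every exact log smooth $g\colon X\to S$.

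Following Cisinski--D\'eglise \cite[Theorem 4.5.2]{CDetale}, I would argue by noetherian induction and localization. The question is local on $S$: for a strict \'etale cover, or for a stratification via the localization sequence $j_\sharp j^*\to\id\to i_*i^*$ in $\DA_\letale^\ex$, the essential image is controlled by $j_\sharp$ and $i_*$. Compatibility of $\rho_!$ with $j_\sharp$ is clear, and with $i_*$ it follows from Proposition \ref{rig.3}, since $i$ is proper. Thus I may assume $S$ has a neat chart, and also use \cite[Propositions 3.1.1, 3.1.4]{logsix} to reduce to $S\simeq \ul{S}\times \pt_P$ as in Step 3 of Theorem \ref{ver.1}.

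Next I would reduce from $X$ to proper objects. By invariance under verticalization in $\DA_\letale$ (built into its definition), $\Lambda_X\simeq\Lambda_{X-\partial_SX}$, so I may assume $g$ is vertical exact log smooth. Using the log motivic six-functor formalism of \cite{logsix} on $\DA_\letale^\ex$ (available Zariski locally with charts, hence here after localization), Poincar\'e duality there gives $g_\sharp\simeq g_!(d)[2d]$. After compactifying $g=\bar g\circ j$ with $\bar g$ proper and using the localization triangle for the boundary, $\Lambda_X$ is built from objects $\bar g_*\Lambda_{\bar X}(d)[2d]$ and $\bar g_*i_*(\cdots)$, for proper $\bar g$. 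By Proposition \ref{rig.3}, $\bar g_*\rho_!\simeq\rho_!\bar g_*$, so these objects lie in the essential image once their arguments $\Lambda_{\bar X}=\rho_!\Lambda_{\bar X}$ do. An induction on dimension handles the boundary terms. Dually, I could show that $\DA_\letale^\ex(S)$ is generated by $f_*\Lambda_V(n)$ with $f$ proper and conclude in the same way.

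The main obstacle is this generation statement: showing that exact log smooth motives are generated by pushforwards of units along proper maps. It needs compactifications of vertical exact log smooth $X$ whose boundary behaves well. My first attempt would be Nagata compactification combined with log blow-ups to make the compactification log smooth, with the boundary a strict normal crossing. A fallback is to use the log cdh-descent of Theorem \ref{logv.7} on both sides to reduce to cases where such a resolution exists.
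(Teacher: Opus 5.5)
There is a genuine gap, and it sits exactly at the step you flag as the main obstacle. The step ``compactify $g=\bar g\circ j$ with $\bar g$ proper'' is not justified in the category of fs log schemes. Nagata gives a compactification of $\ul{X}$ over $\ul{S}$, but it does not give an fs log structure on that compactification extending $\cM_X$ together with a map to $S$. Your proposed fixes are log blow-ups producing a log smooth compactification with strict normal crossing boundary, or log cdh-descent down to cases where such a resolution exists. Both rest on resolution-type results that are not available over an arbitrary finite dimensional noetherian base, and you do not carry them out. As written, essential surjectivity is not proved.

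The missing idea is that no log compactification and no control of the boundary is needed. First localize strict \'etale locally on $X$, which is allowed because the essential image is closed under colimits and $\rho_!$ commutes with strict \'etale $u_\sharp$. Then $f$ is separated, $X$ has a chart, and $T_f$ is trivial. Factor
\[
X\xrightarrow{g}\ul{X}\times_{\ul{S}}S\xrightarrow{j}Y\xrightarrow{p}S.
\]
Here $g$ is the identity on underlying schemes, hence proper. The maps $j$ and $p$ come from a Nagata compactification of the strict, separated, finite type morphism $\ul{X}\times_{\ul{S}}S\to S$, with the log structure on $Y$ pulled back from $S$. So $j$ is a strict open immersion and $p$ is proper, and $f_!\simeq p_*j_\sharp g_*$.

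Proposition \ref{rig.3} holds for arbitrary proper morphisms, with no log smoothness needed, so $\rho_!$ commutes with $g_*$ and $p_*$. It commutes with $j_\sharp$ trivially. Hence $f_!\Lambda_X\simeq\rho_!(p_*j_\sharp g_*\Lambda_X)$. Combining this with $f_\sharp\simeq f_!(d)[2d]$ from \cite[Theorem 1.3.1(2)]{logsix} and Proposition \ref{rig.8} puts $\Lambda_X=f_\sharp\Lambda_X$ in the essential image.

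Even in your own setup, the boundary terms would need no induction on dimension. Given a compactification, $j_\sharp$ already commutes with $\rho_!$. In any case $i_*\Lambda\simeq\rho_!(i_*\Lambda)$ for every closed immersion $i$ by Proposition \ref{rig.3}, however singular the boundary is. The stratification reduction to $\ul{S}\times\pt_P$ is likewise unnecessary, though harmless. The rest of your outline agrees with the paper: full faithfulness from Corollary \ref{rig.5}, closure of the essential image under colimits, shifts and twists, and reduction to vertical $X$ via invariance under verticalization.
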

\begin{proof}
The claim is strict \'etale local on $S$.
Hence we may assume that $S$ has charts Zariski locally.

By Corollary \ref{rig.5},
it remains to show that $\rho_!$ is essentially surjective.
Let $X\to S$ be a vertical exact log smooth morphism of fs log schemes.
By Proposition \ref{rig.8},
we only need to show that $\Lambda_X\in \DA_\letale^\ex(S,\Lambda)$ is in the essential image of $\rho_!$ strict \'etale locally on $X$.
We may assume that the tangent bundle $T_f$ of $f$ is trivial, $X$ has a chart, and $f$ is separated and has pure relative dimension $d$.

Consider the induced factorization
\[
X\xrightarrow{g} \ul{X}\times_{\ul{S}} S\xrightarrow{j} Y\xrightarrow{p} S
\]
of $f$,
where $Y$ is obtained by the Nagata compactification for $\ul{X}\times_{\ul{S}}S \to S$, and $g$ is an induced proper morphism.
Then $f_!\simeq p_*j_!g_*$.
By \cite[Theorem 1.3.1(2)]{logsix},
we have
\[
f_\sharp \simeq f_!(d)[2d]
\colon
\DA_\letale^\ex(X,\Lambda)
\to
\DA_\letale^\ex(S,\Lambda)
\]
since $T_f$ is trivial.
Propositions \ref{rig.8} and \ref{rig.3} yield
\[
\rho_!(p_*j_!g_*\Lambda_X(d)[2d])
\simeq
f_!\Lambda_X (d)[2d].
\]
Combine what we have discussed above to conclude.
\end{proof}

We also prove relative log \'etale rigidity as follows.

\begin{thm}
\label{rig.6}
Let $S$ be a finite dimensional noetherian fs log scheme,
and let $\Lambda$ be a torsion commutative ring killed by an integer invertible on $S$.
Then the functor
\[
\rho_!
\colon
\rD_\letale(S,\Lambda)
\to
\DA_\letale(S,\Lambda)
\]
is an equivalence of symmetric monoidal $\infty$-categories.
\end{thm}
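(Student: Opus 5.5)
Fully faithfulness of $\rho_!\colon \rD_\letale(S,\Lambda)\to\DA_\letale(S,\Lambda)$ is already Proposition \ref{rig.2}. Symmetric monoidality comes from the construction \eqref{DA.0.1}: each of $\rho_{S\sharp}$, $L_\mot$ and $\Sigma^\infty$ is symmetric monoidal. So the content of the theorem is essential surjectivity. The statement is strict \'etale local on $S$, so I would first assume that $S$ has a chart neat at a point and admits charts Zariski locally. Then the remark above makes the six-functor formalism of \cite{logsix} available for $\DA_\letale(-,\Lambda)$ on $\lSch^\Zar_S$.

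The essential image of $\rho_!$ is closed under colimits, since $\rho_!$ is a fully faithful left adjoint built from colimit-preserving functors. It is closed under shifts, and under Tate twists by the log \'etale version of Proposition \ref{rig.8}. Now $\DA_\letale(S,\Lambda)$ is generated under colimits, shifts and Tate twists by $\Sigma^\infty X_+=\Lambda_X$ for $f\colon X\to S$ log smooth. So it suffices to show that each such $\Lambda_X$ lies in the essential image.

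The key reduction uses log \'etale descent in $\DA_\letale$, which says that $\Lambda_X$ is built from $\Lambda_{X'}$ for a log \'etale cover $X'\to X$. By \cite[Theorem III.2.6.7]{Ogu} and Proposition \ref{logv.11}-type arguments, I would use a log blow-up (log \'etale over $S$) and, if needed, a Kummer-type base change to reduce to the case where $f$ factors as
\[
X\xrightarrow{u} X_0\xrightarrow{h} S,
\]
with $u$ log \'etale and $h$ exact log smooth. Here the log \'etale sheaves enter via $\rho_!$ of objects $u_\sharp\Lambda_X$ in $\rD_\letale$.

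More precisely, I would proceed as follows. Write $\Lambda_X\simeq h_\sharp u_\sharp\Lambda_X$ in $\DA_\letale(S,\Lambda)$, where $u_\sharp\Lambda_X=\rho_!^{X_0}(\Lambda_X)$ is the image under $\rho_!$ over $X_0$ of the representable object of $\rD_\letale(X_0,\Lambda)$. Then I need two facts:
\begin{enumerate}
\item[(a)] $h_\sharp$ commutes with $\rho_!$ for $h$ exact log smooth.
\item[(b)] Everything in the essential image of $\rho_!$ over $X_0$ pushes forward into the essential image over $S$.
\end{enumerate}
For (b) I would imitate Theorem \ref{rig.4}. Working strict \'etale locally with trivial tangent bundle, factor $h$ through its Nagata compactification and use $h_\sharp\simeq h_!(d)[2d]$. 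Then I need a log \'etale analogue of Proposition \ref{rig.3}, namely $\rho_!p_*\simeq p_*\rho_!$ for proper $p$, together with $\rho_!j_!\simeq j_!\rho_!$ for open immersions, which is automatic. The analogue of Proposition \ref{rig.3} should follow by the same argument, using Theorem \ref{dim.4}(1),(2) in the log \'etale case together with Proposition \ref{rig.2}, now testing against $\Lambda_U(n)$ for all log smooth $U$.

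The main obstacle is the case where $f$ is not exact, or not vertical. The Poincar\'e duality $h_\sharp\simeq h_!(d)[2d]$ from \cite{logsix} requires vertical exact log smooth morphisms. I would handle this as follows.
\begin{enumerate}
\item[(i)] Use invariance under verticalization, which is built into $\DA_\letale$, to replace $X$ by $X-\partial_SX$, so that $f$ becomes vertical.
\item[(ii)] Use a log blow-up of $S$ combined with log \'etale descent on the target. By \cite[Theorem III.2.6.7]{Ogu}, after a log blow-up $S'\to S$ the pullback of $f$ becomes $\Q$-integral, hence exact.
\item[(iii)] Show that $\rho_!$ is compatible with the pushforward along the proper log \'etale map $S'\to S$, again via the log \'etale analogue of Proposition \ref{rig.3}.
\end{enumerate}
If this descent step fails, the fallback is to show directly that $\DA_\letale(S,\Lambda)$ is generated by $\DA_\letale^\ex$-objects over log blow-ups of $S$ pushed forward, and then combine with Theorem \ref{rig.4} and Proposition \ref{dim.5}.
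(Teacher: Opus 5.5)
Your proposal contains a valid proof, but it takes a longer road than the paper. The paper argues in three lines. First, $\rho_!$ commutes with $g_\sharp$ and $g^*$ for every log \'etale $g$, so essential surjectivity is log \'etale local on $S$. Second, a log blow-up (\cite[Theorem III.2.6.7]{Ogu}) makes $f$ exact. Third, Theorem \ref{rig.4} finishes: the functor there is by construction $\rho_!\circ\kappa^*$, so $\Lambda_X$ with $X$ exact log smooth over $S$ already lies in the image of $\rho_!\colon\rD_\letale(S,\Lambda)\to\DA_\letale(S,\Lambda)$.

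Your steps (i)--(iii) follow the same skeleton, with two differences. You transport along the log blow-up $S'\to S$ by proper pushforward, which forces you to prove a log \'etale analogue of Proposition \ref{rig.3}. That analogue does hold by the same argument, since Theorem \ref{dim.4}(2) needs no exactness in the log \'etale case and Proposition \ref{rig.2} gives full faithfulness. But $g_\sharp$ does the job for free. You also re-run the duality argument of Theorem \ref{rig.4} rather than citing it, and that is the only reason you need the verticalization step (i). Your ``fallback'' is essentially the paper's proof, and it is the one to use.

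Two pieces should be dropped, because they do not work as stated:
\begin{enumerate}
\item Claim (a) presupposes a left adjoint $h_\sharp$ on $\rD_\letale(X_0,\Lambda)$ for exact log smooth $h$. That adjoint is not established before rigidity; compare Theorem \ref{intro.1}(1), where it is obtained as a consequence.
\item The Kummer-type base change of Proposition \ref{logv.11} is in general only a log fppf cover, and $\DA_\letale$ has no a priori descent along such covers.
\end{enumerate}
Neither is needed. After (i)--(ii), $f$ itself is vertical and exact, so you can take $X_0=X$ and $u=\id$.
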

\begin{proof}
Let $f\colon X\to S$ be a log smooth morphism of fs log schemes,
and let $n$ be an integer.
We need to show that $f_\sharp \Lambda_X(n)$ is contained in the essential image of $\rho_!$.
Since $\rho_!$ commutes with $g_\sharp$ and $g^*$ for log \'etale morphisms $g$,
the question is log \'etale local on $S$.
Hence by \cite[Theorem III.2.6.7]{Ogu},
we may assume that $f$ is exact log smooth.
Theorem \ref{rig.4} finishes the proof.
\end{proof}

In particular,
under the above notation,
we have the realization functors
\[
\SH(S)\to \rD_\letale(S,\Lambda),
\;
\SH^\ex(S)\to \rD_\ket(S,\Lambda).
\]
Let us construct more refined realization functors as follows.

\begin{const}
Let $S$ be a finite dimensional noetherian fs log scheme over a perfect field $k$,
and let $\Lambda$ be a torsion commutative ring killed by an integer invertible in $k$.
Consider the adjoint functors
\[
\gamma^* : \DA(k,\Lambda) \rightleftarrows  \DM(k,\Lambda) 
:\gamma_*
\]
in \cite[11.2.16]{CD19}.
Then $\gamma_* \Lambda_k$ is the motivic cohomology spectrum $\rM \Lambda_k$.

We also have the \'etale version
\[
\gamma_\et^* : \DA_\et(k,\Lambda) \rightleftarrows \DM_\et(k,\Lambda) 
:\gamma^\et_*.
\]
The \'etale hyperlocalization functor $L_\et \colon \DA(k,\Lambda)\to \DA_\et(k,\Lambda)$ sends $\rM \Lambda_k$ to $\gamma^{\et}_* \Lambda_k$ since the \'etale hypersheafification of the motivic cohomology complex is indeed the \'etale motivic cohomology complex as a consequence of \cite[Proposition 10.7]{MVW}.
We know that $\gamma^*_\et$ is an equivalence by \cite[Theorems 7.20, 9.35]{MVW} and \cite[Th\'eor\`eme 4.1]{Ayo14},
so we have
\begin{equation}
\label{rig.9.1}
\Lambda_k
\simeq
L_\et \rM \Lambda_k
\end{equation}
in $\CAlg(\DA_\et(k,\Lambda))$.
Let $p\colon S\to \Spec(k)$ be a structural morphism,
and let $\rM\Lambda_S:=p^* \rM \Lambda_k$.
Applying $p^*$ to \eqref{rig.9.1},
we have
\begin{equation}
\label{rig.9.2}
\Lambda_S \simeq L_\letale \rM \Lambda_S
\end{equation}
in $\CAlg(\DA_\letale(S,\Lambda))$,
where $L_\letale$ is the hyperlocalization functor.

Now, the \emph{log \'etale realization functor}
\[
\mathrm{Re}_\letale\colon \Mod_{\rM \Lambda_S}(\SH(S))
\to
\rD_\letale(S,\Lambda)
\]
is defined to be the composite
\begin{align*}
&\Mod_{\rM \Lambda_{S}}(\SH(S))
\to
\Mod_{\rM \Lambda_{S}}(\DA(S,\Lambda))
\\
\xrightarrow{L_\letale} &
\Mod_{L_\letale \rM \Lambda_{S}}(\DA_\letale(S,\Lambda))
\xrightarrow{\simeq}
\DA_\letale(S,\Lambda)
\xrightarrow{\simeq}
\rD_\letale(S,\Lambda),
\end{align*}
where the equivalences are obtained by Theorem \ref{rig.6} and \eqref{rig.9.2}.
Use Theorem \ref{rig.4} and restrict $\mathrm{Re}_\letale$ to define
the \emph{Kummer \'etale realization functor}
\[
\mathrm{Re}_\ket\colon 
\Mod_{\rM \Lambda_{S}}^\ex(\SH(S))
\to
\rD_\ket(S,\Lambda),
\]
where $\Mod_{\rM \Lambda_{S/B}}^\ex(\SH(S))$ is the full subcategory of $\Mod_{\rM \Lambda_{S/B}}(\SH(S))$ generated under colimits, shifts, and Tate twists by exact log smooth motives.
\end{const}

\section{Proof of Theorem \ref{intro.1}}
\label{proof}

Recall that $B$ is a finite dimensional noetherian base scheme.
By Theorem \ref{rig.4},
it suffices to show (1)--(10) in Theorem \ref{intro.1} for either $\rD_\ket(-,\Lambda)$ or $\DA_\letale^\ex(-,\Lambda)$.

(1)--(3) It is formal that these are satisfied for $\DA_\letale^\ex(-,\Lambda)$.

(4) Let $f\colon X\to S$ be a virtual isomorphism in $\lSch_B$.
We can work \'etale locally on $\ul{S}\simeq \ul{X}$,
so we may assume that $X$ and $S$ have charts.
By \cite[Theorem 3.2.9]{logsix},
(4) holds for $\DA_\letale^\ex(-,\Lambda)$.

(5) See Proposition \ref{dim.7}.

(6) The $1$-categorical construction of $(-)_!$ for $\rD_\ket(-,\Lambda)$ is due to \cite[5.4]{Nak1}.
The argument as in \cite[Construction 3.7.4]{logshriek} enhances this $\infty$-categorically.
Proposition \ref{dim.3} implies that $f_!$ admits a right adjoint for every separated morphism $f$ in $\lSch_B$.

(7) Let $f\colon X\to S$ be a separated morphism in $\lSch_B$,
let $\cF\in \DA_\letale^\ex(X,\Lambda)$,
and let $\cG\in \DA_\letale^\ex(S,\Lambda)$.
There exists a commutative square
\[
\begin{tikzcd}
X_0\ar[r,"f_0"]\ar[d,"q_0"']&
S_0\ar[d,"p_0"]
\\
X\ar[r,"f"]&
S
\end{tikzcd}
\]
such that $X_0$ and $S_0$ have charts Zariski locally and $p_0$ and the induced morphism $X_0\to X\times_S S_0$ are separated strict \'etale coverings.
Let $S_\bullet$ and $X_\bullet$ be the \v{C}ech nerves of $p_0$ and $q_0$,
let $p_\bullet\colon S_\bullet\to S$ and $q_\bullet\colon X_\bullet\to X$ be the projections,
and let $f_n\colon X_n\to S_n$ be the induced morphism.
By strict \'etale descent and the projection formula for strict \'etale morphisms,
we have
\begin{gather*}
f_! \cF\otimes \cG
\simeq
\colim_{n\in \Delta^\op}
f_! q_{n_\sharp} q_n^* \cF \otimes \cG
\simeq
\colim_{n\in \Delta^\op}
p_{n\sharp} f_{n!}q_n^* \cF\otimes \cG
\simeq
\colim_{n\in \Delta^\op}
p_{n\sharp}(f_{n!} q_n^* \cF \otimes p_n^* \cG),
\\
f_!(\cF\otimes f^* \cG)
\simeq
\colim_{n\in \Delta^\op}
f_! q_{n\sharp} q_n^* (\cF\otimes f^* \cG)
\simeq
\colim_{n\in \Delta^\op}
p_{n\sharp} f_{n!} (q_n^* \cF \otimes f_n^* p_n^* \cG).
\end{gather*}
We also have $f_{n!}q_n^* \cF\otimes p_n^*\cG\simeq f_{n!}(q_n^*\cF\otimes f_n^*p_n^*\cG)$ by \cite[Theorem 3.7.8]{logshriek}.
Combine these equations to prove (7) for $\DA_\letale^\ex(-,\Lambda)$.

(8) See Theorem \ref{dim.4}.

(9) Let $f\colon X\to S$ be a separated vertical exact log smooth morphism in $\lSch_B$ with pure relative dimension $d$.
Let us use the notation in the proof of (7).
We have $f_{n\sharp} \simeq f_{n!}(-\otimes \Th(T_{f_n}))$ by \cite[Theorem 1.3.1(2)]{logsix},
where $T_{f_n}$ is the tangent bundle of $f_n$.
Since $\ul{T_{f_n}}\to \ul{X_n}$ is a rank $d$ vector bundle,
we have a natural isomorphism
\[
\Th(\ul{T_{f_n}})\simeq \Lambda_{\ul{X_n}} (d)[2d]
\]
in $\DA_\et(\ul{X_n},\Lambda)$
using the orientation theory \cite[Definition 4.1.4]{CDetale}, \cite[Example 2.4.40]{CD19}.
By pulling back to $X_n$,
we have a natural isomorphism
\[
\Th(T_{f_n})\simeq \Lambda_{X_n} (d)[2d]
\]
in $\DA_\letale^\ex(X_n,\Lambda)$.
Hence we have
\(
f_{n\sharp} \simeq f_{n!}(d)[2d].
\)
By strict \'etale descent,
we have
\begin{gather*}
f_\sharp
\simeq
\colim_{n\in \Delta^\op} f_\sharp q_{n\sharp}q_n^*
\simeq
\colim_{n\in \Delta^\op}  p_{n\sharp} f_{n\sharp} q_n^*,
\;
f_!
\simeq
\colim_{n\in \Delta^\op} f_! q_{n\sharp}q_n^*
\simeq
\colim_{n\in \Delta^\op}  p_{n\sharp} f_{n!} q_n^*.
\end{gather*}
Combine these equations to prove (9) for $\DA_\letale^\ex(-,\Lambda)$.

(10) Argue as in (9) to reduce to \cite[Theorem 1.3.1(3)]{logsix}.

This completes the proof of Theorem \ref{intro.1}. \hfill \qedsymbol

\bibliography{bib}
\bibliographystyle{siam}

\end{document}